%% file: main.tex
\documentclass[11pt,a4paper,dvipsnames]{article}
\input{preamble.tex}
\title{\sffamily\bfseries Minimax~Adaptive~Control~for~Finite~Sets~of~Positive\\Linear~Systems\\[2ex]}
\author{
    Fethi Bencherki$^{\star}$ \and  
    Anders Rantzer$^{\star}$
    \\[2ex]
}

\date{%
    $^{\star}$Department of Automatic Control, Lund University, Lund, Sweden \\
    \texttt{\{\href{mailto:fethi.bencherki@control.lth.se}{fethi.bencherki}, \href{mailto:anders.rantzer@control.lth.se}{anders.rantzer}\}@control.lth.se} 
}
\usepackage{wrapfig}
\usepackage{float}
\usepackage{tikz}
\usepackage{graphicx}
\usepackage{amsmath,amssymb}
\usepackage{algorithm}
\usepackage{algpseudocode}
\usetikzlibrary{chains,shapes.multipart}
\usetikzlibrary{shapes,calc}
\usetikzlibrary{automata,positioning}
\usetikzlibrary{positioning, shapes, arrows, calc}
\tikzstyle{block} = [draw, rectangle, line width=0.7mm]
\usetikzlibrary{arrows.meta,positioning,calc,shapes.symbols}
\usepackage{amsthm}
\newtheorem{problem}{Problem}
\newenvironment{proofsketch}
{\begin{proof}[Proof sketch]}
{\end{proof}}
\begin{document}

\maketitle

\begin{abstract}
    \input{tex_input/abstract.tex}

\end{abstract}

\keywords{minimax adaptive control, positive systems, game-theoretic control, dynamic programming, Bellman equation, data-driven control}

\input{tex_input/introduction.tex}

\input{tex_input/contributions.tex}  
\input{tex_input/outline.tex}  
\input{tex_input/prelim.tex}
\input{tex_input/problem_form.tex}

\input{tex_input/reformulation.tex}
\input{tex_input/main_result1.tex}
\input{tex_input/main_result2.tex}

\input{tex_input/conclusions.tex}
\input{tex_input/acknowledgments.tex}  
\begin{appendix}
\input{tex_input/append_prelim}
\input{tex_input/append_main}

\end{appendix}
\begingroup
\sloppy
\printbibliography
\endgroup
\end{document}

%% file: preamble.tex
\usepackage[top=25mm, bottom=25mm, right=25mm, left=25mm]{geometry}
\usepackage[T1]{fontenc}
\usepackage[utf8]{inputenc}

\usepackage[english]{babel}      

\usepackage{lmodern}           
\usepackage{microtype}         
\usepackage{mathrsfs}          
\usepackage{amsthm, amsmath, amssymb, mathtools} 
\usepackage{bm, bbm}           
\usepackage{stmaryrd}          
\usepackage{accents}
\numberwithin{equation}{section}

\usepackage{cases,xfrac,gensymb} 
\usepackage{tocloft}             
\usepackage{listings}            
\usepackage{lipsum}              
\usepackage{titling}             
\usepackage{proof}               
\let\oldunderbar\underbar
\usepackage{sectsty}             
\let\underbar\oldunderbar
\usepackage{parskip}             
\usepackage{booktabs}            
\usepackage{enumerate}           
\usepackage{autobreak}           
\usepackage{xcolor}              
\usepackage{csquotes}            
\usepackage{adjustbox}

\usepackage[bf,sf]{titlesec}       
\usepackage[labelfont={bf,sf}]{caption} 
\usepackage{subcaption}            

\usepackage[
  maxnames=20,      
  giveninits=true,  
  isbn=false,        
]{biblatex}
\usepackage{graphicx}            
\usepackage{float}               
\usepackage{tikz}                
\usetikzlibrary{external}

\usepackage{pgfplots}
\pgfplotsset{compat=1.18}
\usepgfplotslibrary{colormaps}
\usepackage{pgfplotstable}
\usepgfplotslibrary{fillbetween}

\let\mathbf\bm
\DeclareMathAlphabet\mathbfcal{OMS}{cmsy}{b}{n} 

\input{commands.tex}

\makeatletter
\def\th@plain{
  \thm@headfont{\normalfont\sffamily\bfseries}%
  \itshape
}
\def\th@definition{
  \thm@headfont{\normalfont\sffamily\bfseries}%
  \thm@notefont{\normalfont\sffamily\bfseries}%
}
\newtheoremstyle{myStyle1}
  {0.3cm}
  {0.3cm}
  {\itshape}
  {}
  {}
  {:}
  {.5em}
  {%
    \thmname   {\normalfont\sffamily\bfseries #1 }%
    \thmnumber {\normalfont\sffamily\bfseries #2}%
    \thmnote   {\normalfont\sffamily\bfseries \ (#3)}%
  }

\newtheoremstyle{myStyle2}
  {0.3cm}   
  {0.3cm}   
  {}        
  {}        
  {}
  {:}       
  {.5em}    
  {%
    \thmname   {\normalfont\sffamily\bfseries #1 }%
    \thmnumber {\normalfont\sffamily\bfseries #2}%
    \thmnote   {\normalfont\sffamily\bfseries \ (#3)}%
  }
\makeatother

\theoremstyle{myStyle1}
\newtheorem{theorem}{Theorem}
\newtheorem{lemma}{Lemma}

\newtheorem{corollary}{Corollary}
\newtheorem{assumption}{Assumption}

\theoremstyle{myStyle2}
\newtheorem{definition}{Definition}
\newtheorem{remark}{Remark}
\newtheorem{example}{Example}

\providecommand{\keywords}[1]{\textbf{\textsf{Keywords.}} #1}

\definecolor{Red}{rgb}{1, 0, 0}
\definecolor{BlueIMT}{RGB}{0,42,72}
\definecolor{ForestGreen}{rgb}{0.13, 0.55, 0.13}
\definecolor{Gray}{rgb}{0.66, 0.66, 0.66}
\definecolor{Green}{rgb}{0.0, 0.5, 0.0}
\definecolor{MidnightBlue}{rgb}{0.098, 0.098, 0.439}
\definecolor{Orange}{rgb}{0.93, 0.53, 0.18}

\definecolor{color1}{RGB}{68,119,170}   
\definecolor{color2}{RGB}{102,204,238}  
\definecolor{color3}{RGB}{34,136,51}    
\definecolor{color4}{RGB}{17,119,51}    
\definecolor{color5}{RGB}{204,187,68}   
\definecolor{color6}{RGB}{221,204,119}  
\definecolor{color7}{RGB}{204,102,119}  
\definecolor{color8}{RGB}{136,34,85}    
\definecolor{color9}{RGB}{170,51,119}   
\definecolor{color10}{RGB}{102,102,102} 
\definecolor{color11}{RGB}{50,50,50} 

\usepackage{hyperref} 
\hypersetup{%
    hidelinks,
    hypertexnames = true,
    plainpages    = false,
    colorlinks = true,
    urlcolor   = MidnightBlue,
    linkcolor  = MidnightBlue,
    citecolor  = ForestGreen,
}
\usepackage[capitalize,nameinlink]{cleveref}

\newcommand{\overbar}[1]{\mkern 1.5mu\overline{\mkern-1.5mu#1\mkern-1.5mu}\mkern 1.5mu}
\newboolean{showcomments}
\setboolean{showcomments}{true}
\newcommand{\fethi}[1]{\ifthenelse{\boolean{showcomments}}
	{\textcolor{blue}{(Fethi says: #1)}}{}}

\usepackage{comment}

%% file: commands.tex
\newcommand{\abs}[1]{\left\lvert#1\right\rvert}


%% file: tex_input/abstract.tex
We present a minimax adaptive control framework for discrete-time positive linear systems with parametric uncertainty and adversarial disturbances. The uncertainty in the system dynamics is assumed to lie in a finite set of possible plants. We formulate the problem as a dynamic game between the controller, which minimizes the cost, and an adversary, which selects both the disturbances and the plant dynamics to maximize the cost. An equivalent reformulation of the original game transforms the problem into a standard minimax two-player zero-sum dynamic game. This enables the problem to be addressed via minimax dynamic programming. We provide an explicit solution to the Bellman inequality, yielding stabilizing, positivity preserving policies without requiring an initially stabilizing controller. The resulting controller enjoys robustness guarantees in the form of bounded \(\ell_1\)-gain from disturbances to errors. Once the uncertain parameters have been sufficiently estimated, the controller behaves like a standard \(\mathcal H_\infty\)-type optimal controller for positive linear systems. The theoretical findings are supported by numerical experiments illustrating the resulting adaptive controller in action.

%% file: tex_input/introduction.tex
\section{Introduction}
Technological advances have led to engineering systems and networks of increasing scale, interconnections, and dynamical complexity. In many applications, the underlying physics are only partially understood, and the resulting mathematical models are therefore only approximate. As a consequence, uncertainties introduced at the modeling stage must be accounted for in controller design. This has motivated a substantial literature on robust and adaptive control, leading to the seminal work on self-tuning regulators~\cite{aastrom1973self} and followed by important contributions on stochastic adaptive control and convergence analysis~\cite{goodwin1981discrete,narendra1997adaptive,guo1995convergence}. Comprehensive treatments of the topic can be found in the classic textbooks~\cite{astrom1994adaptive,goodwin2014adaptive} and in more recent surveys that connect adaptive control with modern learning-based control and finite-sample analysis~\cite{annaswamy2021historical,annaswamy2023adaptive,matni2019self,tsiamis2023statistical}.

\medskip

A central theme in adaptive control is the interplay between regulation and learning. A controller must maintain good closed loop performance using the information currently available, while also gathering informative data that improves its internal model of the plant over time.
 This tradeoff has been studied extensively since the early self-tuning regulator literature, and it remains central to modern adaptive control, learning-based synthesis, and finite-sample analysis~\cite{aastrom1973self,goodwin1981discrete,narendra1997adaptive,guo1995convergence,annaswamy2023adaptive}. More recent work has sharpened this perspective by clarifying how finite horizon guarantees, regret bounds, and robustness considerations shape adaptive control design~\cite{tsiamis2023statistical,annaswamy2023adaptive}.

\medskip

The minimax adaptive control framework, first introduced in~\cite{didinsky1994minimax}, addresses parametric uncertainties and disturbances in a worst-case sense. Cast in a game-theoretic framework~\cite{bacsar2008h}, the controller is the minimizing player and the disturbances act as the maximizing player. This perspective has recently been revisited with an emphasis on discrete-time minimax optimization~\cite{rantzer2021minimax}. One major advantage is that minimax adaptive controllers do not require an initially stabilizing policy to provide provable performance guarantees. This makes the framework especially attractive in settings with large modeling errors or poorly modeled dynamics. Related work has used this formulation to derive explicit $\ell_2$-gain guarantees and robust simultaneous stabilization results for finite sets of LTI plants~\cite{bencherki2023robust,cederberg2022synthesis,renganathan2023online}.

\medskip

Positive systems are a structured class of dynamical models in which nonnegative initial conditions and positivity-preserving inputs guarantee nonnegative states and outputs. This makes them a natural modeling framework for queueing, routing, and flow networks~\cite{tassiulas1996throughput,arneson2016routing}; traffic network models~\cite{coogan2015traffic}; epidemic and compartmental dynamics~\cite{coogan2015traffic}; and more general monotone processes on nonnegative state spaces~\cite{rantzer2015scalable,sadraddini2019formal}. Their structure can also be exploited to obtain scalable analysis and synthesis methods, since stability and controller design is conducive to scalability~\cite{rantzer2015scalable,arneson2016routing,sadraddini2019formal}.
In particular, positive systems admit linear Lyapunov functions, linear programming based controller synthesis, and distributed control architectures that are computationally efficient than general quadratic or semidefinite formulations~\cite{berman1994nonnegative,farina2011positive,rantzer2021scalable,Rantzer2022,li2024exact}. 
Recent work has also developed data-driven and adaptive methods for positive systems, including direct stabilization from persistently exciting data and linear programming/copositive Lyapunov synthesis~\cite{Shafai2022,Miller2023}, data informativity conditions for distributed positive stabilization~\cite{Iwata2024}, data-driven identification and positive subspace system identification from input-output data~\cite{WangShafai2024a,WangShafai2024b}, dynamic output feedback synthesis for externally positive systems~\cite{MakdahPasqualetti2023}, and robust adaptive online control based on data-driven Bellman/Q-learning equations~\cite{BencherkiRantzer2024a,bencherki2025adaptive}.

\medskip

Motivated by these developments, we propose a framework for minimax adaptive control of discrete-time positive linear systems. The proposed controllers learn the system parameters online from incoming data while preserving positivity and robustness against adversarial disturbances. We do not impose statistical assumptions on the uncertain parameters or disturbances; instead, the model is deterministic, and the unknown quantities are selected by an adversary that seeks to maximize the cost function. By solving a minimax Bellman inequality, we derive explicit adaptive control laws that stabilize the system without requiring an initially stabilizing policy. We establish these results for the case in which the unknown system parameters belong to a finite set, and we show that the closed loop remains robust to unmodeled dynamics and mild modeling errors while remaining computationally tractable and scalable for large networked systems. See Fig.~\ref{fig:prob} for an illustration of the problem considered in the paper.
\begin{figure}[htb!]
    \centering
\begin{tikzpicture}[
    >=latex,
    block/.style={
        draw,
        line width=1.4pt,
        rectangle,
        inner xsep=8pt,
        inner ysep=6pt,
        align=center,
        font=\small
    },
    controller/.style={
        draw=blue!70!black,
        line width=1.2pt,
        rectangle,
        inner xsep=8pt,
        inner ysep=4pt,
        align=center,
        font=\small,
        fill=blue!10
    },
    adversary/.style={
        draw=red!60!black,
        line width=1.2pt,
        fill=red!15,
        cloud,
        cloud puffs=11,
        cloud puff arc=120,
        cloud ignores aspect,
        inner xsep=12pt,
        inner ysep=8pt,
        align=center,
        font=\small,
        text=red!50!black
    },
    redarrow/.style={->, line width=1.2pt, red!60!black},
    bluearrow/.style={->, line width=1.2pt, blue!70!black},
    blackarrow/.style={->, line width=1.2pt}
]

\node[block] (plant) at (0,0) {Positive\\Linear system};
\node[controller, text=blue!70!black] (ctrl) at (0,-1.55)
{Minimax Adaptive\\Controller};
\node[adversary] (adv) at (2.5,1.7) {Adversary};

\draw[blackarrow] (plant.west) -- ++(-2.4,0)
    node[midway, above] {Errors};

\draw[blackarrow] ($(plant.west)+(-1.5,0)$) |- (ctrl.west)
    node[pos=0.35, left] {State Measurement};

\draw[bluearrow] (ctrl.east) -- ++(0.85,0) |- ($(plant.east)+(0,-0.35)$);
\node[blue!70!black] at (4,-0.9) {Control action};

\draw[redarrow] (4.2,0) -- (plant.east)
    node[pos=0, above] {Disturbances};

\draw[redarrow] (0,1.5) -- (plant.north)
    node[midway, left] {System Parameters};

\end{tikzpicture}
\caption{Feedback interconnection of a discrete-time positive linear system with uncertain parameters, external disturbances, unmodelled dynamics, and an adaptive controller. The goal is to design the controller so as to minimize the worst case induced \(\ell_1\)-gain from disturbances to errors, while preserving positivity and accounting for parametric uncertainty.}
    \label{fig:prob}
\end{figure}

%% file: tex_input/contributions.tex
\subsection{Contributions}
The main contributions of this work are:
\begin{enumerate}
\item[\textit{(C1).}] \emph{Minimax adaptive dual control formulation for positive linear systems.} We formulate a minimax dynamic game for positive systems between an adversary, which selects the unknown dynamics and disturbance sequence in an adversarial manner, and a controller, which selects the control inputs. This formulation compels the controller to learn the true dynamics in order to stabilize the system, thereby giving rise to an exploration--exploitation tradeoff and a natural dual-control setting.

\item[\textit{(C2).}] \emph{Extension of \cite{gurpegui2025minimax} to positivity preserving disturbances.}
We show that, in the known system parameters case, our problem extends the setting of~\cite{gurpegui2025minimax} by allowing sign-indefinite disturbances, rather than only nonnegative disturbances as previously done, provided that they preserve state positivity. Despite this enlarged disturbance class, the resulting model based problem still admits a solution via linear programming, as in~\cite{gurpegui2025minimax}. This extension in the admissible disturbance class is essential for the success of the adaptive minimax approach proposed in this paper.

\item[\textit{(C3).}] \emph{Reformulation of the original problem.} We reformulate the original proposed problem in terms of the next state and model-specific nonnegative data histories, thereby obtaining a standard minimax zero-sum dynamic game between an adversary, which selects the positive next states, and a controller, which responds through its choice of inputs.

We establish the equivalence between the original problem and its reformulated counterpart, and show that solving one yields a solution to the other through minimax dynamic programming. This allows us to shift our focus to the reformulated problem.

\item[\textit{(C4).}] \emph{Explicit solutions to the Bellman inequality.}
We derive closed form adaptive policies by providing an explicit solution to
the associated Bellman inequality for systems with unknown input direction.
For such systems, we also quantify the minimum \(\ell_1\)-gain achievable in
the scalar case under the certainty equivalence policy, which provides an upper
bound on the optimal \(\ell_1\)-gain. We then generalize these results to a
finite set of linear time-invariant plants.

\item[\textit{(C5).}] \emph{Motivating applications.} We illustrate how the resulting adaptive policies can be deployed in practice by considering two representative positive linear system applications: a two reservoir network with uncertain transfer direction and a four mode multiclass job fluid queueing network inspired from~\cite{bertsimas2023optimal}.

\end{enumerate}

%% file: tex_input/outline.tex
\subsection{Organisation}
After introducing notation in Section~\ref{notation}, Section~\ref{pbf} presents the problem formulation. Section~\ref{reform} derives a data-history reformulation of the original problem, which enables its conversion into a standard positive minimax dynamic game formulation, addressed via minimax dynamic programming in Section~\ref{sec:dp-positive}. Section~\ref{bellman-ineq} presents our main results in the form of explicit solutions to the Bellman inequality, first for the special case of input sign uncertainty and then for the general case of finite set of positive LTI plants, accompanied by numerical examples. Finally, Section~\ref{conclus} concludes the paper and discusses directions for future research. Preliminary lemmas, their proofs, and the proofs of the main results are deferred to Appendices~\ref{appenA} and~\ref{appenB}. Appendix~\ref{model-based} addresses the model based counterpart of the problem
considered here.

%% file: tex_input/prelim.tex
\subsection{Notation and Preliminaries}\label{notation}
We let \(\mathbb{R}^n_+\) denote the nonnegative orthant in dimension \(n\). The vector \(\mathbf{1}\) denotes the all-ones vector, with dimension understood from context. For any vector \(x\), \(|x|\) denotes the elementwise absolute value. For a vector \(x\), \(\max x\) and \(\min x\) denote its largest and smallest entries, respectively. For a set \(\mathcal M\), \(|\mathcal M|\) denotes its cardinality. If $z_t^{(A,B)} \in \mathbb{R}_+$ is defined for every pair $(A,B)\in\mathcal M$, we write
$
Z_t \coloneqq \{z_t^{(A,B)}\}_{(A,B)\in\mathcal M}
$
for the indexed collection of nonnegative real numbers $z_t^{(A,B)}$, one for each $(A,B)\in\mathcal M$. The \(\ell_1\)-norm of a sequence \(w=\{w_t\}_{t=0}^\infty\) is given by
$
\|w\|_{1}
\coloneqq
\sum_{t=0}^\infty \mathbf{1}^\top |w_t|
=
\sum_{t=0}^\infty \|w_t\|_{1}.
$ Inequalities between vectors and matrices are understood elementwise.

%% file: tex_input/problem_form.tex
\section{Problem formulation}\label{pbf}
In this paper, the focus is the positive linear system class presented next.
\subsection{Positive system class}\label{pb_class}
We consider the discrete-time positive linear systems of the form
\begin{align*}
    x_{t+1} = A x_t + B u_t + w_t,\qquad x_0 \in \mathbb{R}^n_+
\end{align*}
where the dynamics are parameterized by \(A \in \mathbb{R}_+^{n \times n}\) and \(B \in \mathbb{R}^{n \times m}\). We constrain the input \(u_t\) to lie in a positivity preserving admissible input set \(\mathcal U(x_t)\) that depends on the current state $x_t \in \mathbb R_+^n$, where
\[
  \mathcal U(x)
  \subseteq
  \left\{
  u \in \mathbb{R}^m :
  Ax+Bu \geq 0,
  \ \text{for all } x\in\mathbb{R}_+^n
  \right\}.
\]
Thus, at each time \(t \in \mathbb{N}\), the control action satisfies \(u_t \in \mathcal U(x_t)\). Concrete choices of \(\mathcal U(x_t)\) are discussed and specified in section~\ref{sec31}.
The disturbance $w$ is restricted to a set \(\mathcal W(x,u)\) that preserves state positivity pointwise in time. Specifically, we define
\[
  \mathcal W(x,u)
  \coloneqq
  \left\{
  w \in \mathbb{R}^n :
  w \geq -(Ax+Bu),
  \ \text{for all } x\in\mathbb{R}_+^n,
  \ \text{for all } u\in\mathcal U(x)
  \right\}.
\]
These sets are such that, for every \(t \in \mathbb N\), the next state \(x_{t+1}\) remains nonnegative for all nonnegative \(x_t\), \(u_t \in \mathcal U(x_t)\) and \(w_t \in \mathcal W(x_t,u_t)\). 
In particular, \(w_t\) can be negatively large in magnitude as long as it does not render the state negative.
We assume that the true, unknown positive system belongs to a compact model class \(\mathcal M\) consisting of admissible system matrices \((A,B)\). To this end, we define the set $\mathcal{M}$ next.
\begin{definition}[Model set \(\mathcal{M}\)]\label{Def1}
Let
$
    \mathcal M \subseteq \mathbb{R}_+^{n \times n} \times \mathbb{R}^{n \times m}
$
be a compact set of admissible system parameters $(A,B)\in \mathcal{M}$.
\end{definition}
The set $\mathcal{M}$ captures the parametric uncertainty in the dynamics. 
\begin{assumption}\label{assump1}
The true unknown system parameters \((A_\star,B_\star)\) lie in
\(\mathcal M\), i.e., \((A_\star,B_\star)\in\mathcal M\).
\end{assumption}
In the next subsection, the minimax dual control problem considered in this paper is introduced.
\subsection{A minimax dual control problem for positive system}
We consider the following linear minimax dynamic game problem for the class of positive systems introduced above.
\begin{problem}\label{prob:l1-robust-control}
Find a causal policy \(\mu\) solving
\[
\begin{aligned}
 J_\ast(x_0)=\inf_{\mu}\ 
\sup_{\substack{(A,B)\in\mathcal M\\
 w \ge -(Ax+Bu)\\T\in\mathbb N}}
&\sum_{t=0}^{T}
\bigl( s^\top x_t + r^\top u_t - \gamma^\top |w_t| \bigr) \\
\text{s.t.}\quad
& x_{t+1}=Ax_t+Bu_t+w_t,\qquad x_0\in\mathbb R^n_+,\\
& u_t=\mu_t\bigl(x_0,\ldots,x_t;\,u_0,\ldots,u_{t-1}\bigr),\qquad \forall t\in\mathbb N,\\
& u_t\in\mathcal U(x_t),\qquad \forall t\in\mathbb N .
\end{aligned}
\]
\end{problem}
Note that in Problem~\ref{prob:l1-robust-control}, we restrict the policy map \(\mu\) to causal control policies
which are required to generate positivity preserving inputs \(u_t \in \mathcal U(x_t)\). Additionally, we require the worst case stage cost to be nonnegative, as stated in the following assumption.
\begin{assumption}[Positivity of the stage cost]\label{ass:stage cost-pos}
The stage cost weights satisfy \(s\in\mathbb{R}^n_+\), \(r\in\mathbb{R}^m\), and \(\gamma\in\mathbb{R}^n_+\). Moreover, for every \((A,B)\in\mathcal M\), \(t\in\mathbb N\), \(x_t\in\mathbb{R}^n_+\), and \(u_t\in\mathcal U(x_t)\), we assume
\[
   \max_{w_t \in \mathcal{W}(x_t,u_t)}
   \Bigl(s^\top x_t+r^\top u_t-\gamma^\top |w_t|\Bigr)
   \ge 0 .
\]
\end{assumption}
\begin{remark}
Since \(u_t\in\mathcal U(x_t)\) guarantees \(Ax_t+Bu_t\ge 0\), we have \(0\in\mathcal W(x_t,u_t)\). Therefore, Assumption~\ref{ass:stage cost-pos} simplifies to
$
s^\top x_t+r^\top u_t\ge 0.
$
\end{remark}
Unlike standard minimax dynamic games for positive linear systems
(cf.~\cite{bacsar2008h,gurpegui2023minimax}), in Problem~\ref{prob:l1-robust-control} the maximizing adversary selects not only the disturbance sequence \(w\), but also the unknown dynamics \((A,B) \in \mathcal{M}\). In the absence of uncertainty in \((A,B) \), the problem reduces to an \(\mathcal{H}_\infty\)-style optimal control problem for positive linear systems. When \((A,B)\) is unknown, however, the controller must learn the true dynamics in order to stabilize the system. This naturally leads to solutions in the form of adaptive dual controllers that balance learning and control, and can outperform linear state-feedback designs under model uncertainty~\cite{rantzer2021minimax}.

Next, we motivate the study of problems in the form of Problem~\ref{prob:l1-robust-control} by relating them to established problems in the optimal control of positive linear systems. This connection also suggests suitable choices for the admissible input set \(\mathcal U(x)\). 
\subsection{Connections to existing positive system formulations and classes}\label{sec31}

When the model is known, i.e., when the uncertainty set reduces to
$
    \mathcal M=\{(A_\star,B_\star)\},
$
Problem~\ref{prob:l1-robust-control} covers, as special cases, several classes of positive linear systems considered in the model-based optimal control over positive systems literature; see \cite{gurpegui2025minimax,gurpegui2023minimax,ohlin2024heuristic,ohlin2024optimal}. These special cases correspond to the following choices of the admissible input set \(\mathcal U(x)\):
\begin{itemize}
\item[\textit{(i)}] 
The works~\cite{gurpegui2026minimax,gurpegui2025minimax,gurpegui2023minimax} consider the admissible input set
\[
\mathcal U(x)
\coloneqq \bigl\{\, u \in \mathbb{R}^m : |u| \le Ex \,\bigr\},
\qquad E \in \mathbb{R}_{+}^{m\times n}.
\]
Together with the elementwise conditions
$
A \ge |B|E,
s > E^\top |r|.
$
This ensures positivity of the closed-loop system and nonnegativity of the stage cost.
Unlike \cite{gurpegui2025minimax}, which permits only nonnegative disturbances \(w_t\ge 0\), we allow disturbances of either sign, provided that they preserve positivity of the next state. This is encoded by the elementwise constraint
$
w_t \in \mathcal{W}(x_t,u_t),
\ \text{for all}\ t \in \mathbb{N}.
$
Define
\[
\mathcal K(E)
\coloneqq
\Bigl\{
K \in \mathbb{R}^{m\times n}
:
K=\Lambda E,\;
\Lambda=\operatorname{diag}(\lambda_1,\dots,\lambda_m),\;
\lambda_i\in\{-1,+1\},\ \forall i\in\{1,\dots,m\}
\Bigr\}.
\]
\(\mathcal K(E)\) is the set of sign-saturated feedback gains associated with the constraint \(|u|\le Ex\). In the model-based setting, the works~\cite{gurpegui2025minimax,gurpegui2023minimax}
show that it is sufficient, without loss of optimality, to restrict attention
to static state-feedback policies of the form
\[
u_t = Kx_t,
\qquad K\in\mathcal K(E).
\]
Under this construction, it is shown in~\cite{gurpegui2025minimax} that the problem reduces to a linear program whenever the game is feasible. 
\item[\textit{(ii)}] 
In the works \cite{ohlin2024heuristic,ohlin2024optimal}, the input is partitioned as \(u=(u_1,\dots,u_n)\), where \(u_i\in\mathbb{R}^{m_i}\) and \(\sum_{i=1}^n m_i=m\).
Let
\[
E^\top =
\begin{bmatrix}
E_1 & \cdots & E_n
\end{bmatrix}
\in \mathbb{R}_+^{n\times n}.
\]
Define
\[
\mathcal K(E)
\coloneqq \Bigl\{\, K \in \mathbb{R}^{m\times n} :
\forall i\in\{1,\dots,n\},\
\mathbf{1}^\top K_i = E_i^\top \ \text{or}\ \mathbf{1}^\top K_i = 0 \Bigr\},
\]
where \(K_i\) denotes the \(i\)-th block row of \(K\), corresponding to \(u_i\in\mathbb{R}^{m_i}\).
The admissible input set is of the form 
\[
\mathcal U(x)
\coloneqq \Bigl\{\, u \in \mathbb{R}^{m}_+ :
\exists\,K\in\mathcal K(E)\ \text{such that}\ u=Kx \Bigr\}.
\]
In this setup, it is also assumed that \(A+BK\ge 0\) for all \(K\in\mathcal K(E)\).
Finally, to maintain nonnegativity of the stage cost \(s^\top x + r^\top u\), it suffices to assume \(s,r\ge 0\), since inputs are nonnegative by design. This class of positive systems also leads to a linear program and has been shown to be equivalent to stochastic shortest path problems, making it an appealing candidate application for future work.
\end{itemize}
In Appendix~\ref{model-based}, we derive the model-based solution of Problem~\ref{prob:l1-robust-control} in the nominal case where
$
    \mathcal M=\{(A_\star,B_\star)\},
$
and for the choice of \(\mathcal U(x)\) in \textit{(i)}. This is useful for three reasons:
\begin{enumerate}
\item[\textit{(1)}] From Section~\ref{bellman-ineq} onward, we focus on input sets of the form
\textit{(i)}, although analogous results can be derived for \textit{(ii)} in
the same spirit.
    
\item[\textit{(2)}] Unlike \cite{gurpegui2025minimax}, our model-based formulation allows a broader class of disturbances: rather than requiring disturbances to be nonnegative, we only require them to be positivity preserving. We therefore derive the solution to this more relaxed model-based problem in Appendix~\ref{model-based}.
    \item[\textit{(3)}] Understanding the nominal model-based solution plays an important role in deriving the adaptive, model-free solution.
\end{enumerate}

\begin{remark}
The extension of the admissible disturbance set from nonnegative disturbances, \(w\ge 0\), to positivity preserving disturbances, \(w\in \mathcal{W}(x,u)\), plays a crucial role in enabling the success of our approach. This will become clear in the next section.
\end{remark}

%% file: tex_input/reformulation.tex
\section{Reformulated problem via the nonnegative historical data}\label{reform}
In this section, we take a first step toward solving problem~\ref{prob:l1-robust-control} by reformulating it as a standard minimax zero-sum dynamic game through the introduction of so called historical data variables. This reformulation, inspired by~\cite{rantzer2021minimax}, enables the use of standard minimax dynamic-programming theory to address the problem.

The results in this section and in Section~\ref{sec:dp-positive} hold for the
general class of input sets \(\mathcal{U}(x)\) introduced in
Section~\ref{pb_class}, and are stated in that generality. Recall the original minimax problem with unknown dynamics, formulated as Problem~\ref{prob:l1-robust-control}.
\paragraph{Reparameterization via the next state.}
Introduce the next state sequence \(v\), with
$
v_t \coloneqq x_{t+1}.
$
Then,
$
w_t = v_t - (A x_t + B u_t),
$
and the constraint \(w \ge -(Ax+Bu)\) is equivalent to \(v \ge 0\), independently of the adversary’s choice of \((A,B)\). Substituting \(w_t = v_t - (A x_t + B u_t)\) yields the equivalent problem
\begin{equation}\label{eq:robust-rewritten}
\begin{aligned}
\inf_{\mu}\;
\sup_{\substack{(A,B)\in\mathcal M\\[1pt]  v \,\ge\, 0\\[1pt] T\in\mathbb N}}
&\ \sum_{t=0}^{T} \Bigl( s^\top x_t + r^\top u_t - \gamma^\top \lvert v_t - A x_t - B u_t\rvert \Bigr) \\
\text{s.t.}\quad
& x_{t+1}=v_t,\qquad x_0 \in \mathbb{R}_+^n, \\
& u_t=\mu_t\!\big(x_0,\ldots,x_t;\,u_0,\ldots,u_{t-1}\big),\qquad
u_t\in\mathcal U(x_t).
\end{aligned}
\end{equation}
Since the feasibility condition \(v \ge 0\) does not depend on \((A,B)\), maximizing jointly over \((A,B)\in\mathcal M\) and \(v \ge 0\) can be carried out in any order.
Hence, problem~\eqref{eq:robust-rewritten} can be written equivalently as
\begin{equation}\label{eq:robust-rewritten-commuted}
\begin{aligned}
\inf_{\mu}\;
\sup_{ v \,\ge\, 0}
\sup_{\substack{(A,B)\in\mathcal M\\ T\in\mathbb N}}
&\ \sum_{t=0}^{T} \Bigl( s^\top x_t + r^\top u_t - \gamma^\top \lvert v_t - A x_t - B u_t\rvert \Bigr) \\
\text{s.t.}\quad
& x_{t+1}=v_t,\qquad x_0 \in \mathbb{R}_+^n, \\
& u_t=\mu_t\!\bigl(x_0,\ldots,x_t;\,u_0,\ldots,u_{t-1}\bigr),\qquad
u_t\in\mathcal U(x_t).
\end{aligned}
\end{equation}
If one instead assumes nonnegative disturbances, \(w\geq 0\), then the next
state constraint becomes
$
v\geq Ax+Bu,
$
which depends on the unknown true dynamics \((A_\star,B_\star)\). Imposing
this constraint for all pairs \((A,B)\in\mathcal M\) would introduce
conservatism in the adversary's choice, since positivity only needs to hold
for the true pair selected by the adversary. This motivates our preference for
the alternative formulation
$
w\geq -(Ax+Bu),
$
or, equivalently, \(v\geq 0\), which should hold irrespective of which pair
\((A,B)\) is selected by the nonnegative adversary $v$.
\paragraph{Model-specific historical data.}
For each model \((A,B)\in\mathcal M\), define the scalar nonnegative cumulative history by
\begin{align}\label{hist2}
z_{t+1}^{(A,B)}
\coloneqq
\sum_{\tau=0}^{t}
\gamma^\top \bigl| v_{\tau} - A x_{\tau} - B u_{\tau} \bigr|.
\end{align}
Equivalently, this quantity evolves according to the recursion
\begin{align}\label{hist1}
z_{t+1}^{(A,B)}
=
z_t^{(A,B)}
+
\gamma^\top \bigl| v_t - A x_t - B u_t \bigr|,
\qquad
z_0^{(A,B)} = 0.
\end{align}
We then define the historical data collection, indexed by $\mathcal M$, as
\begin{align}\label{hist3}
Z_t \coloneqq \{z_t^{(A,B)}\}_{(A,B)\in\mathcal M},
\end{align}
which collects the history variables associated with all dynamics pairs
$(A,B)\in\mathcal M$.

\paragraph{Minimax game with historical data.}
Using the definitions in~\eqref{hist2}--\eqref{hist3}, the reformulated problem~\eqref{eq:robust-rewritten-commuted} can be equivalently written as follows.

\begin{problem}\label{prob:robust-rewritten-min-history}
Find a causal policy \(\eta\) solving
\[
\begin{aligned}
\inf_{\eta}\;
\sup_{\substack{v \ge 0\\ T \in \mathbb{N}}}
\quad
&
\sum_{t=0}^{T} \bigl(s^\top x_t + r^\top u_t\bigr)
-
\min_{(A,B)\in\mathcal M} z_{T+1}^{(A,B)}
\\[1mm]
\text{s.t.}\quad
&
x_{t+1} = v_t,
\qquad
x_0 \in \mathbb{R}_+^n,
\\
&
z_{t+1}^{(A,B)}
=
z_t^{(A,B)}
+
\gamma^\top \bigl|v_t - A x_t - B u_t\bigr|,
\qquad
z_0^{(A,B)} = 0,
\quad
\forall\,(A,B)\in\mathcal M,
\\
&
Z_t \coloneqq \{z_t^{(A,B)}\}_{(A,B)\in\mathcal M},
\qquad
Z_0 \coloneqq \{0\}_{(A,B)\in\mathcal M},
\\
&
u_t = \eta(x_t,Z_t) \in \mathcal{U}(x_t),
\qquad
\forall\,t\in\mathbb N.
\end{aligned}
\]
\end{problem}
\begin{remark}
Note that constructing the history vector \(Z_t\) does not require knowledge of the true dynamics \((A,B)\), since its construction exhausts all possible pairs in the model set \(\mathcal M\).
\end{remark}
\begin{remark}
In the reformulated problem~\ref{prob:robust-rewritten-min-history}, the unknown true parameters \((A,B)\) appear only in terminal cost and not in the extended dynamics \((x_t,Z_t)\). Consequently, the control law \(\eta(\cdot)\) need not depend on the full state history; it depends only on the current extended state \((x_t,Z_t)\). The problem is cast as a standard zero-sum minimax dynamic game in which the adversary chooses the positive next state \(v_t\ge 0\) to maximize the cost and
the controller chooses the input \(u_t\in\mathcal U(x_t)\) to minimize it.
\end{remark}
\begin{remark}
The reformulation technique used here follows a principle similar to that in~\cite{rantzer2021minimax}, with one important difference. In~\cite{rantzer2021minimax}, the history variable is defined in terms of a global covariance historical-data matrix that is independent of the model set \(\mathcal M\). By contrast, in our formulation, the history variables are defined locally for each possible pair \((A,B)\in\mathcal M\). This distinction is specific to our \(\ell_1\) setup.
\end{remark}
We next address problem~\ref{prob:robust-rewritten-min-history} using minimax dynamic programming.

%% file: tex_input/main_result1.tex
\section{Minimax dynamic programming for positive systems}\label{sec:dp-positive}
\paragraph{Dynamic programming operators.}
For a value function $V(x,Z)$, define the operators $\mathcal F$ and $\mathcal F_u$ by
\begin{align}\label{Foper}
\mathcal F V(x,Z)
\coloneqq
\min_{u\in\mathcal U(x)}
\underbrace{
\max_{v\ge0}
\left\{
s^\top x+r^\top u+V(v,Z_+)
\right\}
}_{\eqqcolon\,\mathcal F_uV(x,Z)},
\end{align}
where $Z_+$ denotes the updated nonnegtaive historical data collection
\[
Z_+
\coloneqq
\{z_+^{(A,B)}\}_{(A,B)\in\mathcal M},
\]
whose entries are updated via
\[
z_+^{(A,B)}
=
z^{(A,B)}
+
\gamma^\top\bigl|v-Ax-Bu\bigr|,
\qquad
\forall\,(A,B)\in\mathcal M.
\]
\paragraph{Initialization and value iteration.}
We initialize value iteration with the terminal cost function
\begin{align}\label{V0}
V_0(x,Z)
=
-\min_{(A,B)\in\mathcal M} z^{(A,B)}.
\end{align}
The value iteration is then carried recursively by
\begin{align}\label{Vk}
V_{k+1}(x,Z)
=
\mathcal F V_k(x,Z)
=
\min_{u\in\mathcal U(x)}
\max_{v\ge 0}
\left\{
s^\top x+r^\top u+V_k(v,Z_+)
\right\}.
\end{align}
This defines the sequence of value functions $\{V_k\}_{k\in\mathbb N}$.

Given all the ingredients introduced above, we are now able to state one of the main results of the paper. This result establishes a crucial equivalence between the original Problem~\ref{prob:l1-robust-control} and its reformulation as a standard dynamic game, Problem~\ref{prob:robust-rewritten-min-history}.
\begin{theorem}\label{thm:l1-dp}
Given \(\gamma\in\mathbb{R}_+^n\) and a parameter set \(\mathcal M\), as in
Definition~\ref{Def1}, satisfying Assumption~\ref{assump1}, define
\(\mathcal F\) by~\eqref{Foper} and the sequence
\(\{V_k\}_{k\in\mathbb N}\) by~\eqref{V0}--\eqref{Vk}, with
\(z^{(A,B)}\) constructed for each \((A,B)\in\mathcal M\). Suppose that
Assumption~\ref{ass:stage cost-pos} holds. Then, the following statements hold.
\begin{enumerate}
    \item[\textit{(i)}] Problems~\ref{prob:l1-robust-control} and~\ref{prob:robust-rewritten-min-history} have the same value. This value is finite if and only if the sequence \(\{V_k(x,0)\}_{k=0}^\infty\) is bounded above for every \(x\in\mathbb R_+^n\). In that case, the limit \(V_\ast \coloneqq \lim_{k\to\infty} V_k\) exists and satisfies
    $
    J_\ast(x_0)=V_\ast(x_0,0).
    $

    \item[\textit{(ii)}] Let \(\eta^\ast(x,Z)\) be the minimizing \(u\) in \(\mathcal F V_\ast(x,Z)\). Then, \(\eta^\ast\) is optimal control law for problem~\ref{prob:robust-rewritten-min-history}, and the control law
\begin{align*}
\mu_t^\ast(x_0,\ldots,x_t,u_0,\ldots,u_{t-1})
\coloneqq
\eta^\ast\!\Biggl(
x_t,\,
\left\{
\sum_{\tau=0}^{t-1}
\gamma^\top
\bigl|x_{\tau+1}-Ax_\tau-Bu_\tau\bigr|
\right\}_{(A,B)\in\mathcal M}
\Biggr).
\end{align*}
    is optimal for the original minimax problem~\ref{prob:l1-robust-control}.

    \item[\textit{(iii)}] If there exists a function \(\overbar V(x,Z)\) such that \(\overbar V\ge V_0\) and
\[
\mathcal F_{\overbar\eta}\,\overbar V(x,Z)\le \overbar V(x,Z)
\qquad
\text{for all } (x,Z) \in \mathbb R_+^n \times \mathbb R^{\abs{\mathcal{M}}},
\]
where
$
\mathcal F_{\overbar\eta}V(x,Z)\coloneqq \mathcal F_uV(x,Z)\big|_{u=\overbar\eta(x,Z)},
$
then the policy \(\overbar\mu\) defined by 
\begin{align*}
u_t
=
\overline{\mu}_t(x_0,\ldots,x_t,u_0,\ldots,u_{t-1})
\coloneqq
\overline{\eta}\!\Biggl(
x_t,\,
\left\{
\sum_{\tau=0}^{t-1}
\gamma^\top
\bigl|x_{\tau+1}-Ax_\tau-Bu_\tau\bigr|
\right\}_{(A,B)\in\mathcal M}
\Biggr).
\end{align*}
 satisfies the cost upper bound
$
J_{\overbar\mu}(x_0)\le \overbar V(x_0,0).
$
\end{enumerate}
\end{theorem}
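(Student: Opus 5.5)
The plan follows the template of the minimax dynamic programming result in~\cite{rantzer2021minimax}, adapted to the $\ell_1$ data histories of~\eqref{hist1}. I will first establish the equivalence of the two problems and then the finite-horizon dynamic programming identity. For the equivalence, I use the substitution $v_t=x_{t+1}$, which turns the constraint $w\ge -(Ax+Bu)$ into $v\ge 0$ independently of $(A,B)$. This justifies exchanging the suprema as in~\eqref{eq:robust-rewritten-commuted}. For fixed $T$ and a fixed trajectory, the inner supremum over $(A,B)\in\mathcal M$ of $-\sum_{t=0}^{T}\gamma^\top|v_t-Ax_t-Bu_t|$ equals $-\min_{(A,B)\in\mathcal M} z^{(A,B)}_{T+1}$ by~\eqref{hist2}. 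The minimum is attained because $\mathcal M$ is compact and $z^{(A,B)}_{T+1}$ is continuous in $(A,B)$. Hence, for every causal policy, the objectives of Problem~\ref{prob:l1-robust-control} and Problem~\ref{prob:robust-rewritten-min-history} coincide.

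The two policy classes correspond as follows. Any policy $\eta(x_t,Z_t)$ induces a causal $\mu_t$, because $Z_t$ is a function of $(x_0,\dots,x_t,u_0,\dots,u_{t-1})$, exactly as in the formula of item~(ii). Conversely, a causal $\mu$ may use more information than $(x_t,Z_t)$. I therefore first compare values over general causal policies in the reformulated game. Only afterwards will I show, through the dynamic programming argument, that the optimum is achieved by a feedback in $(x,Z)$.

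Next I prove by induction on $k$ that
\[
V_k(x,Z)=\inf_{\mu}\sup_{v\ge 0}\Bigl[\sum_{t=0}^{k-1}(s^\top x_t+r^\top u_t)-\min_{(A,B)\in\mathcal M} z^{(A,B)}_k\Bigr]
\]
for a game started at $(x,Z)$ with fixed horizon $k$. This is the standard minimax principle of optimality: the extended state $(x_t,Z_t)$ evolves deterministically given $(u_t,v_t)$, and the cost decomposes additively. Assumption~\ref{ass:stage cost-pos} gives $s^\top x+r^\top u\ge0$, and the adversary may choose $v_k=Ax_k+Bu_k$ for the minimizing model, which leaves $\min_{(A,B)\in\mathcal M} z^{(A,B)}$ unchanged. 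Together these show that $V_{k+1}\ge V_k$, so the sequence $\{V_k\}$ is monotone nondecreasing. The supremum over $T\in\mathbb N$ is then the limit in $k$, and exchanging $\inf_\mu$ with $\lim_k$ gives $J_\ast(x_0)=\lim_k V_k(x_0,0)$.

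The \emph{main obstacle} is this interchange of $\inf$ and $\lim$. The inequality $\ge$ is immediate. For $\le$ I would use item~(iii) applied with $\overbar V=V_\ast$ and $\overbar\eta=\eta^\ast$. The needed fact $\mathcal FV_\ast=V_\ast$, or at least $\mathcal F V_\ast\le V_\ast$, follows from monotonicity, since $V_k\le V_\ast$ gives $V_{k+1}=\mathcal FV_k\le \mathcal FV_\ast$. For the reverse inequality I would argue through the min over $u$ of a monotone limit. If attainment of the minimizer is delicate, I would settle for $\varepsilon$-optimal selections. Finiteness holds if and only if $\{V_k(x,0)\}$ is bounded, because the sequence is monotone.

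For item~(iii), fix $\overbar\eta$ and any adversary sequence $v$ and horizon $T$. Iterating the inequality $s^\top x_t+r^\top u_t+\overbar V(x_{t+1},Z_{t+1})\le \overbar V(x_t,Z_t)$ from $t=0$ to $T$ telescopes to
\[
\sum_{t=0}^{T}(s^\top x_t+r^\top u_t)+\overbar V(x_{T+1},Z_{T+1})\le \overbar V(x_0,0).
\]
Combining this with $\overbar V\ge V_0=-\min_{(A,B)\in\mathcal M} z^{(A,B)}$ bounds the reformulated cost by $\overbar V(x_0,0)$. The equivalence from the first step transfers this bound to $J_{\overbar\mu}(x_0)$. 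Item~(ii) then follows by taking $\overbar V=V_\ast$ and $\overbar\eta=\eta^\ast$, which gives the bound $J_{\mu^\ast}(x_0)\le V_\ast(x_0,0)=J_\ast(x_0)$, so $\mu^\ast$ is optimal.
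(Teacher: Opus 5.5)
\paragraph{There is a genuine gap, in the step you flag as the main obstacle.}
Your overall architecture matches the paper's. Your telescoping proof of (iii) is a cleaner, trajectory-wise version of the paper's comparison $V_k\le W_k\le\overbar V$, and your game-level monotonicity argument is fine. The problem is this claim: $\mathcal F V_\ast\le V_\ast$ ``follows from monotonicity, since $V_k\le V_\ast$ gives $V_{k+1}=\mathcal F V_k\le\mathcal F V_\ast$.'' Letting $k\to\infty$ there gives $V_\ast\le\mathcal F V_\ast$. That is the opposite inequality, and it is useless for invoking (iii). Since $\max_v$ commutes with monotone limits, $\mathcal F_uV_\ast=\sup_k\mathcal F_uV_k$. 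The inequality you actually need is therefore
\[
\min_{u\in\mathcal U(x)}\,\sup_{k}\,\mathcal F_uV_k(x,Z)\;\le\;\sup_{k}\,\min_{u\in\mathcal U(x)}\,\mathcal F_uV_k(x,Z),
\]
which is the nontrivial direction of a min--sup exchange. Passing to $\varepsilon$-optimal selections only handles attainment of the minimizer defining $\eta^\ast$; it does nothing for this exchange. As written, your proposal proves only $J_\ast(x_0)\ge V_\ast(x_0,0)$. The reverse inequality in (i) and the optimality of $\mu^\ast$ in (ii) remain open.

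\paragraph{How to close it.}
You need compactness and semicontinuity.
\begin{enumerate}
\item[(a)] Assume $\mathcal U(x)$ is compact; this holds for $\{|u|\le Ex\}$.
\item[(b)] Show by induction that $Z\mapsto V_k(x,Z)$ is lower semicontinuous. $V_0$ is continuous. The map $(u,Z)\mapsto\mathcal F_uV_k(x,Z)$ is a supremum over $v$ of lower semicontinuous functions. Minimizing over the compact set $\mathcal U(x)$ preserves lower semicontinuity.
\item[(c)] Take minimizers $u_k$ of $\mathcal F_uV_k(x,Z)$ and a subsequence $u_k\to\bar u\in\mathcal U(x)$. For $j\le k$,
\[
\mathcal F_{u_k}V_j(x,Z)\le\mathcal F_{u_k}V_k(x,Z)=V_{k+1}(x,Z)\le V_\ast(x,Z).
\]
\item[(d)] Lower semicontinuity in $u$ then gives $\mathcal F_{\bar u}V_j(x,Z)\le V_\ast(x,Z)$. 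Letting $j\to\infty$ yields $\mathcal FV_\ast\le\mathcal F_{\bar u}V_\ast\le V_\ast$.
\end{enumerate}
The paper's own proof does not supply this step either. It asserts that the value of Problem~\ref{prob:l1-robust-control} is the limit of the truncated values~\eqref{eq:FH-lb}, which is the same interchange taken for granted.
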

\begin{proof}
    See Appendix~\ref{AppendB}.
\end{proof}
\begin{remark}
Theorem~\ref{thm:l1-dp} has three main implications:
\begin{enumerate}
\item[\textit{(1)}] The original problem~\ref{prob:l1-robust-control} and the reformulated problem~\ref{prob:robust-rewritten-min-history} have the same optimal value, as item~\textit{(i)} above shows. 

\item[\textit{(2)}] Any policy that solves the reformulated problem optimally, or suboptimally, gives a policy for the original problem with the same optimality, or suboptimality, guarantee, as items~\textit{(ii)}-\textit{(iii)} above state. 

\item[\textit{(3)}] Any solution of the Bellman inequality associated with the reformulated problem provides an upper bound for the value of the original problem, achieved by the corresponding induced policy, as item~\textit{(iii)} states. 
\end{enumerate}
\end{remark}
Having established the equivalence between the original problem, Problem~\ref{prob:l1-robust-control}, and the reformulated problem, Problem~\ref{prob:robust-rewritten-min-history}, we now focus on the reformulated problem. To this end, we relax the Bellman equation and instead consider the corresponding Bellman inequality, as stated in Theorem~\ref{thm:l1-dp}\textit{(iii)}, for which a solution will be derived in the sequel.

%% file: tex_input/main_result2.tex
\section{Explicit solutions to the Bellman inequality} \label{bellman-ineq}
To illustrate how the results of Theorem~\ref{thm:l1-dp} can be applied, we now focus on the setting described in item \emph{(i)} of Subsection~\ref{sec31}. Similar results for the setting in item \emph{(ii)} can be obtained by following the same approach to the one developed here. Accordingly, we let
\[
    \mathcal U(x)\coloneqq\{\,u\in\mathbb{R}^m: |u|\le Ex\,\},
    \qquad \forall x\in\mathbb{R}^n_+,
\]
where the matrix $E$ is governed by the following assumption.
\begin{assumption}\label{ass:input-set}
We assume that there exists \(E\in\mathbb{R}_+^{m\times n}\) such that:
\begin{enumerate}
\item[\textit{(i)}] For all \((A,B)\in\mathcal M\), \(A\ge |B|E\).

\item[\textit{(ii)}] The stage-cost parameters satisfy \(s\ge E^\top |r|\). 
\end{enumerate}
\end{assumption}
The two conditions in Assumption~\ref{ass:input-set} have the following interpretations.
\begin{enumerate}
    \item[\textit{(i)}]
    This condition ensures positivity of the closed-loop system for all admissible dynamics \((A,B)\in\mathcal M\) including the true pair $(A_\star,B_\star)$. Indeed, for every \(t\in\mathbb N\), \(x_t\in\mathbb R^n_+\), and \(u_t\) satisfying \(|u_t|\le Ex_t\), we have
    \[
        Ax_t+Bu_t
        \ge Ax_t-|B||u_t|
        \ge (A-|B|E) x_t
        \ge 0
    \]
for all $(A,B) \in \mathcal{M}$.
    \item[\textit{(ii)}]
    This condition implies Assumption~\ref{ass:stage cost-pos}. Whenever \(Ax_t+Bu_t\ge 0\), and since \(\gamma\in\mathbb R^n_+\), we have, for every \(t\in\mathbb N\), \(x_t\in\mathbb R^n_+\), and \(u_t\) satisfying \(|u_t|\le Ex_t\),
    \begin{multline*}
       \max_{w_t \ge -(A x_t + B u_t)}
       \Bigl(s^\top x_t+r^\top u_t-\gamma^\top |w_t|\Bigr)
       =
       s^\top x_t+r^\top u_t  
       \ge s^\top x_t-|r|^\top |u_t|  
      \ge \bigl(s-E^\top |r|\bigr)^\top x_t  
       \ge 0 .
    \end{multline*}
    Hence, the worst-case stage cost is nonnegative.
\end{enumerate}
We begin by providing a solution to the Bellman inequality for the special case of input sign uncertainty, and then generalize it to the case of a finite set of LTI plants afterwards.
\subsection{Positive systems with unknown input direction}\label{sec7}
We consider positive linear systems with unknown input direction. For scalar unstable systems, it can be shown that it is not possible to synthesize a static state-feedback law that stabilizes both input directions simultaneously: stabilizing one direction necessarily destabilizes the other; see \cite{vinnicombe2004examples} for further details.
In this setting, the uncertainty enters only through the sign of the input matrix, while the state matrix is common across the models. Specifically, we consider the two model unknown sign class
\begin{align}\label{model-class1}
   \mathcal M \coloneqq \{(A,B),(A,-B)\} \subset \mathbb R_+^{n\times n}\times \mathbb R^{n\times m},
\end{align}
for which the historical data collection reduces to
$Z_t=\{z_t^{(+)},\,z_t^{(-)}\}$, where
$z_t^{(+)}\coloneqq z_t^{(A,B)}$ and
$z_t^{(-)}\coloneqq z_t^{(A,-B)}$ where the model-specific histories are updated as
\[
z_+^{(\sigma)}
=
z^{(\sigma)}
+\gamma^\top\bigl|\,v-Ax-\sigma Bu\,\bigr|,
\qquad
z_0^{(\sigma)}=0,\ \ \sigma\in\{+,-\}.
\]
Recall also the input constraint \(\abs{u}\le Ex\). Without a loss of optimality, Such inputs can be expressed as \(u=Kx\), with \(K\) belonging to
\[
\mathcal K(E)
\coloneqq
\Bigl\{\, K \in \mathbb{R}^{m\times n} :
K=\Lambda E,\ 
\Lambda=\operatorname{diag}(\lambda_1,\dots,\lambda_m),\
\lambda_i\in[-1,1]
\Bigr\}.
\]
The next result gives an explicit expression for an adaptive controller satisfying a prescribed \(\ell_1\)-gain bound $\gamma$ whenever the pair $(A,B)$ is stabilizable. 
\begin{theorem}\label{thm:l1-unknown-sign}
Let \(K_+, K_- \in \mathcal{K}(E)\), and consider the model class
\(\mathcal{M}\) in~\eqref{model-class1}. Let \((s,r,E)\) be as in
Assumption~\ref{ass:input-set}. Suppose there exist \(p_+\), \(p_-\), and \(h\)
satisfying \(0\leq\max\{p_+,p_-\}\leq h\leq \gamma\) such that the following
inequalities hold
\[
\begin{aligned}
\textit{(i)}\;\;
&p_\sigma
\ \ge\
s+K_\sigma^\top r+(A+\sigma BK_\sigma)^\top p_\sigma,
\qquad \text{for all}\ \sigma\in\{+,-\},
\\[4pt]
\textit{(ii)}\;\;
&h
\ \ge\
s+K_k^\top r+(A+\sigma BK_k)^\top p_\sigma,
\qquad \text{for all}\ k,\sigma\in\{+,-\}\ \text{with}\ k\neq \sigma,
\\[4pt]
\textit{(iii)}\;\;
&h^\top x
\ \ge\
\bigl(s^\top+r^\top K_k+h^\top A\bigr)x
+(h-\gamma)^\top |BK_kx|,
\qquad k\in\{+,-\},\ \text{for all}\ x\in\mathbb R_+^n.
\end{aligned}
\]
Then, the Bellman inequality
$
\mathcal F\,\overbar V(x,Z)\leq \overbar V(x,Z) 
$ holds for
\begin{align*}
    \overbar V(x,Z)
    =
    \max\Bigl\{
    p_+^\top x-z^{(+)},
    \,p_-^\top x-z^{(-)},
    \,h^\top x-\tfrac12\!\left(z^{(+)}+z^{(-)}\right)
    \Bigr\} 
\end{align*}
Consequently, the certainty equivalence policy
\begin{equation}\label{eq:cert-unknown-sign}
u_t= K_{k_t}x_t,
\qquad
k_t=\arg\min_{\sigma\in\{+,-\}} z_t^{(\sigma)},
\end{equation}
satisfies the cost bound
$
J_\mu(x_0)\leq h^\top x_0.
$
If, in addition, \(m=1\) (the single input case), then inequality \textit{(iii)}
above simplifies to the state independent inequality
\[
\textit{(iii')}\;\;
h^\top
\ge
s^\top + r^\top K_k + h^\top A + (h-\gamma)^\top |B|E,
\qquad k\in\{+,-\}.
\]
\end{theorem}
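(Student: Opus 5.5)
We will verify the Bellman inequality in its policy form $\mathcal F_{\overbar\eta}\overbar V\le\overbar V$, with $\overbar\eta(x,Z)=K_{k}x$ and $k=\arg\min_\sigma z^{(\sigma)}$, and then apply Theorem~\ref{thm:l1-dp}\textit{(iii)}. This is enough, because $\mathcal F\overbar V\le\mathcal F_{\overbar\eta}\overbar V$. The comparison $\overbar V\ge V_0=-\min_\sigma z^{(\sigma)}$ holds since $p_\pm\ge 0$ and $x\ge0$ give $p_\sigma^\top x-z^{(\sigma)}\ge -z^{(\sigma)}$. The cost bound then follows from $\overbar V(x_0,0)=\max\{p_+,p_-,h\}^\top x_0=h^\top x_0$, using $\max\{p_+,p_-\}\le h$.

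Fix $(x,Z)$ and assume without loss of generality that $k=+$, i.e.\ $z^{(+)}\le z^{(-)}$; the other case is symmetric. Write $u=K_+x$ and $y_\sigma=Ax+\sigma BK_+x\ge0$ (by Assumption~\ref{ass:input-set}\textit{(i)}). The map $v\mapsto s^\top x+r^\top u+\overbar V(v,Z_+)$ is a maximum of three functions, so the maximum over $v\ge0$ of the max equals the max of the three separate maxima. It therefore suffices to bound each of the three pieces by $\overbar V(x,Z)$.

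\emph{Piece $p_+$.} Since $p_+\le\gamma$, we have $p_+^\top v-\gamma^\top|v-y_+|\le p_+^\top y_+$. Hence the piece is at most $s^\top x+r^\top K_+x+p_+^\top y_+-z^{(+)}\le p_+^\top x-z^{(+)}$ by \textit{(i)}.

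\emph{Piece $p_-$.} By the same argument we get $s^\top x+r^\top K_+x+p_-^\top(A-BK_+)x-z^{(-)}$. Condition \textit{(ii)} with $k=+$, $\sigma=-$ bounds this by $h^\top x-z^{(-)}$, and $h^\top x-z^{(-)}\le h^\top x-\tfrac12(z^{(+)}+z^{(-)})$ because $z^{(-)}\ge z^{(+)}$.

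\emph{Piece $h$.} This is the main obstacle. We must bound
\[
\max_{v\ge0}\Bigl\{h^\top v-\tfrac12\gamma^\top\bigl(|v-y_+|+|v-y_-|\bigr)\Bigr\}.
\]
The function is concave and piecewise linear, and it splits componentwise. In component $i$, between the points $y_{+,i}$ and $y_{-,i}$ the slope is $h_i\ge0$; beyond the larger point the slope is $h_i-\gamma_i\le0$. The maximum is therefore attained at $v_i=\max(y_{+,i},y_{-,i})=(Ax)_i+|BK_+x|_i$. At this point the value is $h_i\bigl((Ax)_i+|BK_+x|_i\bigr)-\tfrac12\gamma_i\cdot 2|BK_+x|_i$. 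Summing over $i$ gives $h^\top Ax+(h-\gamma)^\top|BK_+x|$.

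Adding the stage cost and $-\tfrac12(z^{(+)}+z^{(-)})$, condition \textit{(iii)} with $k=+$ gives exactly the bound $h^\top x-\tfrac12(z^{(+)}+z^{(-)})$. Each of the three pieces is thus bounded by one of the terms of $\overbar V(x,Z)$, which proves the Bellman inequality.

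For $m=1$, the vector $K_k=\lambda E$ has $|\lambda|\le1$, so $|BK_kx|=|\lambda|\,|B|\,Ex$ with $Ex\ge0$ a scalar. Since $h-\gamma\le0$, the term $(h-\gamma)^\top|B|Ex|\lambda|$ is at least $(h-\gamma)^\top|B|Ex$. Hence \textit{(iii')} implies \textit{(iii)} for all $x\ge0$, and it is linear in $x$, which gives the state-independent form.
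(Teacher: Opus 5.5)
Your verification of the Bellman inequality and of the cost bound is correct, and in substance it is the paper's argument. The paper sets $p_{++}=p_+$, $p_{--}=p_-$, $p_{+-}=p_{-+}=h$, checks that \eqref{ineq-cond-jk} collapses to \textit{(i)}--\textit{(iii)}, and invokes Theorem~\ref{thm1}, whose proof bounds each piece exactly as you do. Your $p_+$, $p_-$ and $h$ pieces (with $k=+$) are the index cases $i=j=k$, $i=j\neq k$ and $i=k\neq j$ there. Working directly with the two models is more self-contained, but it is the same route.

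The single-input step, however, has the implication reversed. Let $c\coloneqq(s^\top+r^\top K_k+h^\top A)x$ and $d\coloneqq(h-\gamma)^\top|B|Ex$, so $d\le 0$. With $K_k=\lambda E$ you correctly obtain that \textit{(iii)} reads $h^\top x\ge c+|\lambda|d$ and \textit{(iii')} reads $h^\top x\ge c+d$. Because $d\le0$ and $|\lambda|\le 1$, we have $c+|\lambda|d\ge c+d$. So the right-hand side of \textit{(iii)} is the larger one, and \textit{(iii)} implies \textit{(iii')}, not conversely.

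For $|\lambda|<1$ the weaker \textit{(iii')} does not suffice. At $Z=0$, with $k$ the index the CE rule selects, the $h$-piece of $\mathcal F_{\overbar\eta}\overbar V(x,0)$ equals exactly $c+|\lambda|d$, while $\overbar V(x,0)=h^\top x$. Hence \textit{(iii)} itself is necessary for the Bellman inequality under the CE policy. Intuitively, less probing means a smaller learning term $-\gamma^\top|BK_kx|$.

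You are right that $\mathcal K(E)$ as defined in Section~\ref{sec7} allows $|\lambda|<1$, which the paper's proof sidesteps by taking $K_k=\pm E$. The correct fix is that restriction: for $|\lambda|=1$ one has $|BK_kx|=|B|Ex$ exactly, and \textit{(iii)} and \textit{(iii')} are equivalent. For general $\lambda_k\in[-1,1]$, the state-independent condition must keep the factor $|\lambda_k|$ in front of $(h-\gamma)^\top|B|E$.
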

\begin{proof}
See Appendix~\ref{thm:l1-unknown-sign-append}.
\end{proof}
The next remark provides a recipe for the solving the vector inequalities in Theorem~\ref{thm:l1-unknown-sign}.
\begin{remark}[Recipe for solving \emph{(i)}--\emph{(iii)}]\label{here}
The inequalities in Theorem~\ref{thm:l1-unknown-sign} can be solved in the
following sequence:
\begin{enumerate}
\item[\emph{(i)}] First, for each \(\sigma\in\{+,-\}\), one may solve the corresponding equality
\[
p_\sigma = s + A^\top p_\sigma - E^\top\!\left|\,r+\sigma B^\top p_\sigma\,\right|,
\]
which gives the optimal cost achieved by the model-based controller for the
known sign \(\sigma\). The corresponding optimal model-based gain is
\[
K_\sigma^\star
=
-\operatorname{diag}\!\bigl(\operatorname{sgn}(r+\sigma B^\top p_\sigma)\bigr)\,E.
\]
\item[\emph{(ii)}] Next, with \(p_+\) and \(p_-\) fixed, choose \(h\) large enough
so that the inequalities hold. Since there are only two mixed cases,
namely \((\sigma,k)=(+,-)\) and \((\sigma,k)=(-,+)\), the smallest feasible
choice is the componentwise maximum
\[
h
=
\max\!\Bigl\{
s+r^\top K_-+p_+^\top(A+BK_-),\;
s+r^\top K_++p_-^\top(A-BK_+)
\Bigr\}.
\]
In particular, this yields \(h\ge p_+\) and \(h\ge p_-\), since the
right-hand sides are obtained by applying the controller designed for the wrong,
and potentially destabilizing, input direction.
\item[\emph{(iii)}] Finally, with \(p_+\), \(p_-\), and \(h\) fixed, choose
\(\gamma \ge h\) sufficiently large so that the last inequality holds.
For fixed \(p_+\), \(p_-\), and \(h\), these inequalities are linear in
\(\gamma\), so a feasible value of \(\gamma\) can be readily obtained.
\end{enumerate}
In the single input case \(m=1\) with \(K_k=\pm E\), condition \emph{(iii)}
reduces to \emph{(iii')}, so the last step becomes a componentwise linear
inequality in \(\gamma\), independent of \(x\).
\end{remark}
Next, we provide an example showcasing the results obtained in Theorem~\ref{thm:l1-unknown-sign}.
\begin{example}[Two-reservoir network with uncertain transfer direction]\label{ex:unknown-sign}
Consider the two-reservoir water network shown in Figure~\ref{fig:water-network-unknown-sign}. The states \(x_{1,t}\) and \(x_{2,t}\) represent the nonnegative stored water volumes in each reservoir. The reservoirs are connected through the surrounding network and through a controllable interconnection pipe equipped with a pump. The dynamics are linearized to
$
x_{t+1}=Ax_t\pm Bu_t+w_t,
$
with
\[
A=\begin{bmatrix}\frac12 & \frac{9}{10}\\[2pt]\frac25 & \frac{7}{10}\end{bmatrix},\qquad
B=\begin{bmatrix}\frac15\\[2pt]-\frac25\end{bmatrix},\qquad
E=\begin{bmatrix}1 & 1\end{bmatrix},\qquad
s=\begin{bmatrix}1\\[2pt]1\end{bmatrix},\qquad
r=\frac15.
\]
Here, the matrix \(A\) models passive storage and hydraulic coupling. Each reservoir retains part of its own water volume over one time step, captured by the diagonal elements of $A$, while the off diagonal entries account for network mediated redistribution between the reservoirs. The input vector \(B\) has one positive and one negative entry because a transfer command moves water from one reservoir to the other, meaning one stored volume increases while the other decreases. The uncertainty \(\pm B\) represents the fact that the effective direction of the interconnection pump is unknown, for instance because of reversed installation or an uncertain sign convention in the supervisory controller. The disturbance \(w_t\) represents exogenous inflows and withdrawals at the reservoirs, such as rainfall, upstream supply variations, consumer demand, or unmeasured leakage, while preserving nonnegativity of the next state $x_{t+1}$ consistent with the physical requirement that stored water volumes cannot be negative. We have $K_+=E,\ K_-=-E$
and 
Assumption~\ref{ass:input-set} holds since
\[
A=
\begin{bmatrix}
\frac12 & \frac{9}{10}\\[2pt]
\frac25 & \frac{7}{10}
\end{bmatrix}
\ge
\begin{bmatrix}
\frac15 & \frac15\\[2pt]
\frac25 & \frac25
\end{bmatrix}
=|B|E,
\qquad
s=
\begin{bmatrix}
1\\[2pt]
1
\end{bmatrix}
>
\begin{bmatrix}
\frac15\\[2pt]
\frac15
\end{bmatrix}
=E^\top|r|.
\]
Moreover,
$
\operatorname{eig}(A)=\frac{3\pm\sqrt{73}}{10},
$
so the open loop system is unstable. In particular, the true pump direction must be identified in order to stabilize the reservoir volumes. We now apply the recipe of Remark~\ref{here} to extract the adaptive policy. Solving \emph{(i)} gives the smallest positive
\[
p_+=
\begin{bmatrix}
4 & 8
\end{bmatrix}^{\top},
\qquad
p_-=
\begin{bmatrix}
\frac{8}{3} & \frac{16}{3}
\end{bmatrix}^{\top}.
\]
The corresponding model-based optimal gains are then
\[
K_+^\star=-\operatorname{sgn}(r+B^\top p_+)E=E,
\qquad
K_-^\star=-\operatorname{sgn}(r-B^\top p_-)E=-E,
\]
so they coincide with the gains chosen above. Next, the inequalities \emph{(ii)} become
\[
h \ge s+ K_-^\top r+p_+^\top(A+BK_-)
=
\begin{bmatrix}
\frac{42}{5} & \frac{62}{5}
\end{bmatrix}^{\top},
\qquad
h \ge s+ K_+^\top r+p_-^\top(A-BK_+)
=
\begin{bmatrix}
\frac{94}{15} & \frac{134}{15}
\end{bmatrix}^{\top}.
\]
Hence the smallest feasible choice is the componentwise maximum
\[
h=
\max\!\left\{
\begin{bmatrix}
\frac{42}{5} & \frac{62}{5}
\end{bmatrix}^{\top},
\begin{bmatrix}
\frac{94}{15} & \frac{134}{15}
\end{bmatrix}^{\top}
\right\}
=
\begin{bmatrix}
\frac{42}{5} & \frac{62}{5}
\end{bmatrix}^{\top}.
\]
Finally, using the single input condition \emph{(iii')}, one obtains
\[
\gamma_1+2\gamma_2\ge \tfrac{292}{5},
\qquad
\gamma\ge h.
\]
Taking \(\gamma_1\) at its smallest admissible value \(\gamma_1=h_1=\frac{42}{5}\), the smallest corresponding \(\gamma_2\) is
$
\gamma_2=25,
$
so one minimal choice is
$
\gamma=
\begin{bmatrix}
\frac{42}{5} & 25
\end{bmatrix}^{\top}.
$
Simulations with the control law~\eqref{eq:cert-unknown-sign} are shown in Figure~\ref{fig:unknown-sign-six-panels}. In this interpretation, the certainty-equivalence controller initially increases control activity in order to learn the true pump direction; once the history variables identify the correct sign, the controller collapses to the model-based law for the true network dynamics. For more details on the implementation, refer
to the code\footnote{Code available at: \href{https://github.com/Fethi-Bencherki/minimax-adaptive-control-of-positive-systems}{https://github.com/Fethi-Bencherki/minimax-adaptive-control-of-positive-systems}}.
\begin{figure}[!tbh]
\centering
\begin{tikzpicture}[>=stealth,thick,scale=0.9,transform shape]

\def\tankW{0.9}      
\def\tankE{0.14}     
\def\tankH{4.2}


\draw[line width=0.9pt]
(0,\tankE) ellipse [x radius=\tankW,y radius=\tankE];

\draw[line width=0.9pt]
(-\tankW,\tankE)--(-\tankW,\tankH);

\draw[line width=0.9pt]
(\tankW,\tankE)--(\tankW,\tankH);

\draw[line width=0.9pt]
(0,\tankH) ellipse [x radius=\tankW,y radius=\tankE];

\fill[blue!25]
(-0.89,\tankE)
--
(-0.89,2.55)
arc[start angle=180,end angle=360,
    x radius=.89,
    y radius=.11]
--
(0.89,\tankE)
arc[start angle=360,end angle=180,
    x radius=.89,
    y radius=.11];

\draw[blue!60]
(0,2.55) ellipse [x radius=.89,y radius=.11];

\node[font=\small\bfseries] at (0,4.7) {Reservoir 1};
\node at (0,1.45) {\Large $x_1$};


\begin{scope}[xshift=5cm]


\draw[line width=0.9pt]
(0,\tankE) ellipse [x radius=\tankW,y radius=\tankE];

\draw[line width=0.9pt]
(-\tankW,\tankE) -- (-\tankW,\tankH);

\draw[line width=0.9pt]
(\tankW,\tankE) -- (\tankW,\tankH);

\draw[line width=0.9pt]
(0,\tankH) ellipse [x radius=\tankW,y radius=\tankE];

\fill[blue!25]
(-0.90,\tankE)
--
(-0.90,1.55)
arc[start angle=180,end angle=360,
    x radius=.90,
    y radius=.11]
--
(0.90,\tankE)
arc[start angle=360,end angle=180,
    x radius=.90,
    y radius=.11];

\draw[blue!60]
(0,1.55) ellipse [x radius=.90,y radius=.11];

\node[font=\small\bfseries] at (0,4.7) {Reservoir 2};
\node at (0,0.95) {\Large $x_2$};

\end{scope}


\draw[line width=0.9pt] (0.90,1.80) -- (2.12,1.80);

\draw[line width=0.9pt] (2.88,1.80) -- (4.10,1.80);

\draw[fill=white,line width=0.9pt] (2.50,1.80) circle (0.38);

\draw[->] (2.25,2.02) -- (2.75,2.02);
\draw[->] (2.75,1.58) -- (2.25,1.58);

\node[below=6pt] at (2.50,1.42) {$\pm u_t$};


\draw[dashed]
(0.95,2.85) --
(1.55,3.30) --
(3.45,3.30) --
(4.05,2.85);

\node[
fill=white,
font=\scriptsize,
inner sep=1pt
]
at (2.5,3.48)
{passive storage/coupling};


\draw[
decorate,
decoration={
snake,
amplitude=0.6mm,
segment length=4mm
},
->
]
(-2.0,3.3)--(-0.78,2.8);

\node at (-2.3,3.5) {$w_{t,1}$};

\draw[
decorate,
decoration={
snake,
amplitude=0.6mm,
segment length=4mm
},
->
]
(7,3.3)--(5.78,2.8);

\node at (7.2,3.5) {$w_{t,2}$};

\end{tikzpicture}
\caption{Two-reservoir water network with uncertain transfer direction. The states \(x_1\) and \(x_2\) denote stored water volumes, \(u_t\) is the pump command, and \(w_t\) represents exogenous inflows and withdrawals. The matrix \(A\) captures passive storage and hydraulic coupling, while the uncertainty \(\pm B\) means that a positive control input may transfer water in either direction.}
\label{fig:water-network-unknown-sign}
\end{figure}
\begin{figure}[!tbh]
    \centering
    \begin{minipage}[t]{0.45\linewidth}
        \centering
        \input{plots/water_tank/state_x1.tex}
    \end{minipage}
    \hfill
    \begin{minipage}[t]{0.45\linewidth}
        \centering
        \input{plots/water_tank/state_x2.tex}
    \end{minipage}
    \vspace{0.5em}
    \begin{minipage}[t]{0.45\linewidth}
        \centering
        \input{plots/water_tank/control_input.tex}
    \end{minipage}
    \hfill
    \begin{minipage}[t]{0.45\linewidth}
        \centering
        \input{plots/water_tank/disturbance.tex}
    \end{minipage}
    \vspace{0.5em}
    \begin{minipage}[t]{0.45\linewidth}
        \centering
        \input{plots/water_tank/history_variables.tex}
    \end{minipage}
    \hfill
    \begin{minipage}[t]{0.45\linewidth}
        \centering
        \input{plots/water_tank/sign_estimate.tex}
    \end{minipage}
    \label{fig:ce-unknown-sign}
\caption{Closed loop simulation for Example~\ref{ex:unknown-sign}, comparing the adaptive controller for the unknown input direction with the optimal controller that knows the true sign. The panels show, from top left to bottom right, \(x_1\), \(x_2\), the control input \(u_t\), the positivity-preserving disturbance \(w_t\), the history variables \(z^{(+)}\) and \(z^{(-)}\), and the true and estimated signs. The true sign is fixed but unknown to the adaptive controller, while disturbances may be negative as long as they preserve state positivity. The adaptive controller initially uses probing to identify the input direction; once the sign is learned, its behavior coincides with the known dynamics controller.}
    \label{fig:unknown-sign-six-panels}
\end{figure}
\end{example}
Next, we derive an upper bound on the minimum achievable gain \(\gamma\) for the certainty-equivalence (\textup{CE}) policy in~\eqref{eq:cert-unknown-sign} over the model class \(\mathcal{M}\) in the scalar case with \(r=0\), that is, with no input cost. We denote this gain by \(\gamma^{\mathrm{CE}}(a)\).
\begin{corollary}[Scalar CE gain and cost upper bounds]\label{thm:l1-scalar}
Consider the scalar case \(n=m=1\) with model class
$
\mathcal M=\{(a,b),(a,-b)\},
$
where \(a>0\) and \(b>0\). Let \(s=1\) and \(r=0\), and set
\(E=\frac{a}{b}\), \(K_+=-E\), and \(K_-=E\).
Hence, Theorem~\ref{thm:l1-unknown-sign} applies to the CE controller
\begin{align}\label{CE-scalar}
    u_t=-k_t \tfrac{a}{b}x_t,\qquad
k_t=\arg\min_{\sigma\in\{+,-\}} z_t^{(\sigma)}.
\end{align}
Then, the conditions in Theorem~\ref{thm:l1-unknown-sign} \textit{(i)}-\textit{(iii)} reduces to the simpler scalar inequalities
\[
\begin{aligned}
\text{\emph{\textit{(i)}}}\;\;
 p\ge 1,
\qquad
\text{\emph{\textit{(ii)}}}\;\;
 h\ge 1+2ap,
\qquad
\text{\emph{\textit{(iii)}}}\;\;
 h\ge 1+2ah-\gamma a .
\end{aligned}
\]
Moreover, the smallest value of \(\gamma\) for which there exist
\(p,h\in[0,\gamma]\) satisfying \textit{(i)}--\textit{(iii)} is
\begin{equation}\label{gain-bound}
\gamma^{\mathrm{CE}}(a)
=
\max\{1+2a,\,4a\}
=
\begin{cases}
1+2a, & 0<a\leq \tfrac12,\\[4pt]
4a,   & a\geq \tfrac12.
\end{cases}
\end{equation}
The corresponding minimizing values are \(p^\star=1\) and
\(h^\star=1+2a\), and the certainty equivalence policy proposed here certifies the cost
bound
$
J_\mu(x_0)\leq (1+2a)x_0.
$
\end{corollary}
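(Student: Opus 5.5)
The plan is to specialize Theorem~\ref{thm:l1-unknown-sign} to $n=m=1$ with $A=a$, $B=b$, $E=a/b$, $s=1$, $r=0$. Then I would solve the resulting scalar optimization over $(p,h,\gamma)$ by a short monotonicity argument.

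\emph{Admissibility.} First I check Assumption~\ref{ass:input-set}. We have $|B|E=b\cdot a/b=a\le A$, and $s=1\ge E^\top|r|=0$. The gains $K_\pm=\mp E$ lie in $\mathcal K(E)$, so Theorem~\ref{thm:l1-unknown-sign} applies, with the single-input form \textit{(iii')} available.

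\emph{Reduction of the conditions.} Next I compute the four closed-loop coefficients.
\begin{itemize}
\item Matched pairs: $A+BK_+=a-b\cdot a/b=0$ and $A-BK_-=a-b\cdot a/b=0$. Hence condition \textit{(i)} reads $p_\sigma\ge 1$ for both signs.
\item Mismatched pairs: $A+BK_-=a+a=2a$ and $A-BK_+=a+a=2a$. Hence condition \textit{(ii)} reads $h\ge 1+2a\,p_\sigma$ for both signs.
\end{itemize}
The constraints are symmetric in $\sigma$, and all right-hand sides increase with $p_\sigma$. So without loss I take $p_+=p_-=p$, which gives the scalar forms \textit{(i)} $p\ge1$ and \textit{(ii)} $h\ge 1+2ap$. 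For \textit{(iii')} I use $r=0$ and $|B|E=a$, which gives $h\ge 1+ah+(h-\gamma)a=1+2ah-\gamma a$. This is exactly the stated inequality \textit{(iii)}.

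\emph{Minimal $\gamma$.} Decreasing $p$ only relaxes \textit{(ii)}, so $p^\star=1$ and $h\ge 1+2a$. The remaining requirements on $\gamma$ are $\gamma\ge h$ and $\gamma\ge g(h)\coloneqq\bigl(1+(2a-1)h\bigr)/a$. So I minimize $\max\{h,g(h)\}$ over $h\ge 1+2a$. At the endpoint, $g(1+2a)=\bigl(1+4a^2-1\bigr)/a=4a$. I then split into two cases.
\begin{itemize}
\item If $a\le\tfrac12$, then $g$ is nonincreasing, and at $h=1+2a$ we already have $g(h)=4a\le 1+2a=h$. Larger $h$ only increases the binding term $h$, so the optimum is $\gamma=1+2a$ with $h^\star=1+2a$.
\item If $a\ge\tfrac12$, both terms are nondecreasing in $h$, so $h^\star=1+2a$ and $\gamma=\max\{1+2a,4a\}=4a$.
\end{itemize}
Feasibility of $0\le\max\{p_+,p_-\}=1\le h\le\gamma$ is immediate in both cases. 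This yields~\eqref{gain-bound}.

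\emph{Cost bound.} Finally, Theorem~\ref{thm:l1-unknown-sign} gives $J_\mu(x_0)\le\overbar V(x_0,0)=\max\{p x_0,p x_0,h x_0\}=h^\star x_0=(1+2a)x_0$.

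The only delicate step is the minimization. It requires justifying the choices $p_+=p_-=p^\star=1$ and the sign analysis of the slope $2a-1$ in $g$; the rest is direct substitution.
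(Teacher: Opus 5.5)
Your proposal is correct and follows essentially the same route as the paper. You substitute into conditions \textit{(i)}, \textit{(ii)}, \textit{(iii')} of Theorem~\ref{thm:l1-unknown-sign}, fix $p=1$ so that $h\ge 1+2a$, and split on the sign of $1-2a$ to get both the lower bound on $\gamma$ and a matching feasible choice; your minimization of $\max\{h,g(h)\}$ repackages the paper's inequality chain $1-\gamma a\le h(1-2a)\le 1-4a^2$, and your explicit closed-loop factors $0$ and $2a$, the check of Assumption~\ref{ass:input-set}, and the reduction $p_+=p_-=1$ fill in details the paper only asserts.
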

\begin{proof}
See Appendix~\ref{thm:l1-scalar-appe}.
\end{proof}
\begin{remark}
In the absence of input cost, $r=0$, Corollary~\ref{thm:l1-scalar} shows that
$\gamma^{\mathrm{CE}}(a)$ is an upper bound on the best $\ell_1$-gain certified by
Theorem~\ref{thm:l1-unknown-sign} from the disturbance $w$ to the state $x$
under the scalar CE policy in~\eqref{CE-scalar}. Equivalently, the certified performance bound
can be written explicitly as
$$
\sum_{t=0}^{\infty} x_t
\leq
(1+2a)x_0
+
\max\{1+2a,4a\}
\sum_{t=0}^{\infty}\lvert w_t\rvert.
$$
The existence conditions for the stabilizing certainty-equivalence policy proposed in
Theorem~\ref{thm:l1-unknown-sign} are only sufficient and therefore may not capture
the best $\ell_1$-gain achievable by the CE policy. One may conjecture that the CE policy
achieves the optimal $\ell_1$-gain bound, analogous to the conjecture posed for
the quadratic setting in~\cite{vinnicombe2004examples}. The derivation of such optimal $\ell_1$-gain bound is deferred to future work.  
\end{remark}
We next generalize Theorem~\ref{thm:l1-unknown-sign} to a finite set of positive LTI models \((A,B)\in\mathcal M\).
\subsection{Explicit solution of the Bellman inequality in the general case}\label{sec8}
We now turn our focus to finding an explicit expression for an adaptive controller satisfying a prescribed bound on the \(\ell_{1}\)-gain for finite model sets of the form
\begin{align}\label{modelsetM}
    \mathcal M = \{(A_1,B_1),\ldots,(A_M,B_M)\}
    \subset \mathbb{R}_+^{n\times n}\times\mathbb{R}^{n\times m},
\end{align}
for which the historical data collection reduces to
$Z_t=\{z_t^{(1)},\ldots,z_t^{(M)}\}$, where
$z_t^{(i)}\coloneqq z_t^{(A_i,B_i)}$ for \(i=1,\ldots,M\).
In this case, problem~\ref{prob:robust-rewritten-min-history} becomes
\begin{equation}
\label{eq:symmetric-minimax-min-history}
\begin{aligned}
J_*(x_0)
=
&\;
\inf_{\eta}\;
\sup_{\substack{v\ge0\\ T\in\mathbb N}}
\left(
\sum_{t=0}^{T}\bigl(s^\top x_t+r^\top u_t\bigr)
-
\min_{i\in\{1,\ldots,M\}} z^{(i)}_{T+1}
\right)
\\[1mm]
\text{s.t.}\quad
&
x_{t+1}=v_t,
\qquad
x_0\in\mathbb R_+^n,
\\
&
z^{(i)}_{t+1}
=
z^{(i)}_{t}
+
\gamma^\top\bigl|v_t-A_i x_t-B_i u_t\bigr|,
\qquad
z^{(i)}_{0}=0,
\quad
i=1,\ldots,M,
\\
&
Z_t
\coloneqq
\{z_t^{(1)},\ldots,z_t^{(M)}\},
\qquad
Z_0
=
\{0,\ldots,0\},
\\
&
u_t
=
\eta_t(x_t,Z_t),
\qquad
|u_t|
\le
Ex_t.
\end{aligned}
\end{equation}
A solution to the Bellman inequality corresponding to
Problem~\ref{eq:symmetric-minimax-min-history} is characterized in the next
theorem.
\begin{theorem}\label{thm1}
Let \(K_1,\ldots,K_M\in\mathcal{K}(E)\subset\mathbb{R}^{m\times n}\), and consider the model set \(\mathcal{M}\) in~\eqref{modelsetM}. Let \((s,r,E)\) be as in Assumption~\ref{ass:input-set}.
Suppose there exist vectors \(p_{ij}\in\mathbb{R}^n_+\), for
\(i,j\in\{1,\ldots,M\}\), such that
$
0 \le p_{ij}=p_{ji}\le \gamma ,
$
and such that, for all \(x\in\mathbb{R}^n_+\) and all
\(i,j,k\in\{1,\ldots,M\}\), excluding the case \(i\neq j=k\), the inequality
\begin{multline}\label{ineq-cond-jk}
p_{jk}^\top x
\ge
s^\top x
+r^\top K_k x
+p_{ij}^\top \max\!\bigl\{A_i x + B_i K_k x, A_j x + B_j K_k x\bigr\} 
-\frac{1}{2}\gamma^\top
\bigl|(A_i-A_j)x+(B_i-B_j)K_k x\bigr|
\end{multline}
holds.
Define the linear value functions
\[
    V^{ij}(x,Z)
    \coloneqq
    p_{ij}^\top x-\frac{1}{2}\bigl(z^{(i)}+z^{(j)}\bigr),
    \qquad
    \overbar V(x,Z)
    \coloneqq
    \max_{i,j} V^{ij}(x,Z).
\]
Then, $\overbar V$ solves the Bellman inequality, i.e.
$
    \mathcal{F}\overbar V(x,Z)\le \overbar V(x,Z)
$ holds. Consequently, the certainty equivalence control law
\[
u_t = K_{k_t}x_t,
\qquad
k_t = \arg\min_{i\in\{1,\ldots,M\}} z_t^{(i)}
\]
satisfies the performance bound
$
J_{\mu}(x_0)\le \max_{i,j} p_{ij}^\top x_0 .
$
\end{theorem}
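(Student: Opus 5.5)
We verify the Bellman inequality for the specific policy $\overbar\eta(x,Z)=K_kx$ with $k\in\arg\min_i z^{(i)}$, that is, $\mathcal F_{\overbar\eta}\overbar V\le\overbar V$. Since $\mathcal F\overbar V\le\mathcal F_{\overbar\eta}\overbar V$, this gives the stated Bellman inequality. The cost bound then follows from Theorem~\ref{thm:l1-dp}\textit{(iii)} once we check $\overbar V\ge V_0$ and evaluate $\overbar V(x_0,0)=\max_{i,j}p_{ij}^\top x_0$.

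For $\overbar V\ge V_0$, take $i=j=k$ with $k$ the minimizing index. Then $V^{kk}(x,Z)=p_{kk}^\top x-z^{(k)}\ge -\min_i z^{(i)}$, because $p_{kk}\ge0$ and $x\ge0$.

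For the main step, fix $(x,Z)$ and $u=K_kx$. Since $\overbar V$ is a maximum of the $V^{ij}$, we must show for each pair $(i,j)$ and every $v\ge0$ that
\[
s^\top x+r^\top K_kx+p_{ij}^\top v-\tfrac12\bigl(z^{(i)}_++z^{(j)}_+\bigr)\le \overbar V(x,Z).
\]
Write $y_i=A_ix+B_iK_kx$ and $y_j=A_jx+B_jK_kx$; both are nonnegative by Assumption~\ref{ass:input-set}\textit{(i)}. The left side equals
\[
s^\top x+r^\top K_kx+p_{ij}^\top v-\tfrac12\gamma^\top\bigl(|v-y_i|+|v-y_j|\bigr)-\tfrac12\bigl(z^{(i)}+z^{(j)}\bigr).
\]
We maximize over $v$ componentwise. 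The $\ell$-th component contributes the piecewise linear concave term $p_{ij,\ell}v_\ell-\tfrac12\gamma_\ell(|v_\ell-y_{i,\ell}|+|v_\ell-y_{j,\ell}|)$. For $v_\ell$ beyond $\max\{y_{i,\ell},y_{j,\ell}\}$ its slope is $p_{ij,\ell}-\gamma_\ell\le0$. Between $y_{i,\ell}$ and $y_{j,\ell}$ the slope is $p_{ij,\ell}\ge0$, and below both it is $p_{ij,\ell}+\gamma_\ell\ge 0$. Hence the maximum is attained at $v_\ell=\max\{y_{i,\ell},y_{j,\ell}\}$, where the absolute values sum to $|y_{i,\ell}-y_{j,\ell}|$. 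The supremum over $v\ge0$ is therefore
\[
s^\top x+r^\top K_kx+p_{ij}^\top\max\{y_i,y_j\}-\tfrac12\gamma^\top|y_i-y_j|-\tfrac12\bigl(z^{(i)}+z^{(j)}\bigr).
\]
Whenever $(i,j,k)$ is not of the excluded type, \eqref{ineq-cond-jk} bounds this by $p_{jk}^\top x-\tfrac12(z^{(i)}+z^{(j)})$. Because $z^{(k)}\le z^{(i)}$, this is at most $p_{jk}^\top x-\tfrac12(z^{(j)}+z^{(k)})=V^{jk}(x,Z)\le\overbar V(x,Z)$.

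The main obstacle is the excluded case $i\neq j=k$, where the pair contains the current estimate $k$ together with a different model $i$. Here I would use the symmetry $p_{ij}=p_{ji}$ and relabel the pair as $(j,i)=(k,i)$, i.e.\ apply \eqref{ineq-cond-jk} with the roles of the two indices swapped. With the triple $(i',j',k)=(k,i,k)$ we have $j'=i\neq k$, so this triple is not excluded. The objective is symmetric in $(i,j)$, since $\max\{y_i,y_j\}$ and $|y_i-y_j|$ are symmetric. The inequality then yields the bound $p_{ik}^\top x-\tfrac12(z^{(i)}+z^{(k)})=V^{ik}(x,Z)\le\overbar V(x,Z)$. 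In this case no comparison of history variables is even needed, since the right-hand history term already matches. If the relabelling does not fit the exclusion pattern as intended, I would instead handle $i=j=k$ and the mixed pairs separately, mirroring conditions \textit{(i)}--\textit{(ii)} of Theorem~\ref{thm:l1-unknown-sign}. Having covered all triples, taking the maximum over $(i,j)$ gives $\mathcal F_{\overbar\eta}\overbar V\le\overbar V$, and Theorem~\ref{thm:l1-dp}\textit{(iii)} concludes.
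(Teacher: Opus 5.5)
Your proof is correct and follows the paper's argument: the same policy $K_kx$ with $k$ the minimizing history index, the componentwise maximization over $v$ attained at $\max\{y_i,y_j\}$ (the paper's Lemma~\ref{lem:sup-ptwise-vec}, which you simply apply uniformly, including $i=j=k$), and the bound $\mathcal F_{K_kx}V^{ij}\le\max\{V^{ik},V^{jk}\}$ via $z^{(k)}\le z^{(i)},z^{(j)}$; your explicit index swap for the excluded case $i\neq j=k$ is exactly what the paper encodes by writing the left-hand side as $\max\{p_{ik}^\top x,p_{jk}^\top x\}$. The relabelling works as you intended, since the triple $(k,i,k)$ is never of the excluded type and the right-hand side of \eqref{ineq-cond-jk} is symmetric in the pair because $p_{ik}=p_{ki}$, so your fallback plan is unnecessary.
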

\begin{proof}
    See Appendix~\ref{appenBsth}.
\end{proof}
\begin{remark}
The model index $k_t\in\{1,\ldots,M\}$ selected by the certainty equivalence policy is given by
\[
k_t
=\arg\min_{i \in\{1,\ldots,M\}} \sum_{\tau=0}^{t-1}
\gamma^\top\!\bigl|x_{\tau+1}-A_i x_\tau-B_i u_\tau\bigr|
=\arg\min_{i \in\{1,\ldots,M\}} \sum_{\tau=0}^{t-1}
\left\|\operatorname{diag}(\gamma)\bigl(x_{\tau+1}-A_i x_\tau-B_i u_\tau\bigr)\right\|_1.
\]
The selected model is the one that minimizes the cumulative \(\gamma\)-weighted \(\ell_1\)-norm of the disturbance prediction error up to time \(t\).
\end{remark}
\begin{remark}
The inequalities \eqref{ineq-cond-jk} admit an interesting interpretation.

\begin{enumerate}
\item[\emph{(i)}]
If \(i=j=k\), then \eqref{ineq-cond-jk} reduces to
\[
p_{ii}^\top x
\ge
s^\top x+r^\top K_i x+p_{ii}^\top (A_i+B_iK_i)x,
\]
which is the standard linear algebraic inequality for the plant \((A_i,B_i)\) with parameter \(p_{ii}\). The quantity \(p_{ii}^\top x\) provides an upper bound on the closed loop infinite horizon cost when model \(i\) is known.
\item[\emph{(ii)}]
If \(i=j\neq k\), then \eqref{ineq-cond-jk} becomes
\[
p_{ik}^\top x
\ge
s^\top x+r^\top K_k x+p_{ii}^\top (A_i+B_iK_k)x,
\]
showing that the additional cost incurred by applying a controller not matched to the true model is upper bounded by the gap
$
p_{ik}^\top x-p_{ii}^\top x .
$
\item[\emph{(iii)}]
If \(i\neq j\), then the negative term
$
-\frac12 \gamma^\top
\bigl|(A_i-A_j)x+(B_i-B_j)K_kx\bigr|
$
captures the benefit of learning: even when a mismatched controller is used, improved model knowledge reduces the cost to go.
\end{enumerate}
\end{remark}
\begin{example}[Recipe for solving~\eqref{ineq-cond-jk}]
One possible way of solving the inequalities in~\eqref{ineq-cond-jk} for the model set
\[
\mathcal M=\{(A_1,B_1),\ldots,(A_M,B_M)\}
\]
can be carried out as follows.
\begin{enumerate}
\item[\emph{(i)}] First, consider the cases in which all indices are the same, i.e.,
\(i=j=k\). For each model \((A_i,B_i)\), \(i\in\{1,\ldots,M\}\), we solve the corresponding
model-based fixed-point equation
\begin{equation}\label{eq:diag-fp}
p_{ii}
=
s+A_i^\top p_{ii}
-
E^\top\bigl|r+B_i^\top p_{ii}\bigr|,
\qquad
\gamma\geq p_{ii}.
\end{equation}
This yields the Cost vectors \(p_{11},\ldots,p_{MM}\). The associated model-based optimal
gains are
\begin{equation}\label{eq:gain-opt}
K_i^\star
=
-\operatorname{diag}\!\bigl(\operatorname{sgn}(r+B_i^\top p_{ii})\bigr)E,
\qquad i=1,\ldots,M.
\end{equation}
This step fixes the diagonal Cost vectors \(p_{11},\ldots,p_{MM}\) and the corresponding
model-based gains \(K_1^\star,\ldots,K_M^\star\).

\item[\emph{(ii)}] Next, consider the cases in which \(i=j\neq k\). For each unordered pair
\(\{j,k\}\) with \(j,k\in\{1,\ldots,M\}\) and \(j\neq k\), define the symmetric mixed cost vectors
$
p_{jk}\coloneqq p_{kj}.
$
We then choose \(p_{jk}\) large enough to dominate the two right hand sides corresponding to
the cases \((i,j,k)=(j,j,k)\) and \((i,j,k)=(k,k,j)\). A feasible componentwise choice is
\begin{equation}\label{eq:mixed-pjk}
p_{jk}
=
\max\!\Bigl\{
s+K_k^\top r+\bigl(A_j+B_jK_k\bigr)^\top p_{jj},
\;
s+K_j^\top r+\bigl(A_k+B_kK_j\bigr)^\top p_{kk}
\Bigr\},
\qquad j\neq k,
\end{equation}
where the maximum is taken componentwise. These inequalities correspond to applying the
controller designed for one model to the other model. In other words, they capture the cost
increase that may occur when the wrong controller is applied. This step fixes the symmetric
mixed cost vectors \(p_{jk}=p_{kj}\) for all \(j\neq k\), corresponding to the cases
\(i=j\neq k\).

\item[\emph{(iii)}] Finally, consider the cases with \(i=k\neq j\). With all \(p_{ij}\) fixed,
choose \(\gamma\in\mathbb R_+^n\) sufficiently large so that
\(0\leq p_{ij}\leq \gamma,\ i,j\in\{1,\ldots,M\}\)
and the remaining inequalities in~\eqref{ineq-cond-jk} hold. At this stage, the remaining
admissible mixed index cases are precisely those with \(i=k\neq j\), while the cases
\(i\neq j=k\) are excluded by assumption. For each such triple, the corresponding inequality
is linear in \(\gamma\). Since there are only finitely many index combinations, a feasible
value of \(\gamma\) can be obtained by choosing it sufficiently large so that all remaining
inequalities hold.
\end{enumerate}
We summarize these steps in Algorithm~\ref{alg:recipe-ineq-cond-jk}.
\end{example}

\begin{algorithm}[t]
\caption{Recipe for solving~\eqref{ineq-cond-jk}.}
\label{alg:recipe-ineq-cond-jk}
\begin{algorithmic}[1]
\State \textbf{Input:} Model set \(\mathcal M=\{(A_1,B_1),\ldots,(A_M,B_M)\}\), input matrix \(E\), cost vectors \(s,r\).
\State \textbf{Output:} Cost vectors \(p_{ij}\), gains \(K_i^\star\), and gain bound \(\gamma\).

\State \textbf{Step 1:} Solve the diagonal cases \(i=j=k\) using~\eqref{eq:diag-fp}, and compute \(K_i^\star\) using~\eqref{eq:gain-opt}.
\State \textbf{Step 2:} For each \(j\neq k\), set \(p_{jk}=p_{kj}\) according to~\eqref{eq:mixed-pjk}.
\State \textbf{Step 3:} Choose \(\gamma\) sufficiently large so that \(0\le p_{ij}\le\gamma\) and the remaining cases \(i=k\neq j\) in~\eqref{ineq-cond-jk} hold.
\State \textbf{Return:} \(\{p_{ij}\}\), \(\{K_i^\star\}\), and \(\gamma\).
\end{algorithmic}
\end{algorithm}
\begin{remark}
The recipe provided in Algorithm~\ref{alg:recipe-ineq-cond-jk} may not yield the tightest cost
vectors \(p_{ij}\) and \(\ell_1\)-gain bound $\gamma$. It can be refined to produce
better solutions by using techniques similar to those in
\cite{cederberg2022synthesis}, adapted to our positive system, linear cost
setting.
\end{remark}
We illustrate the theoretical results obtained in this section with the following numerical example.
\begin{example}[Four-mode multiclass job queueing network]\label{ex:mfqnet-four-mode}
We construct a four mode example of multiclass job fluid queueing network (MFQNET) inspired by the model in \cite{bertsimas2023optimal}. 
We consider a queueing network with three job classes and three associated queues. Jobs of classes 1 and 2 wait in two upstream queues and are served by servers \(S_1\) and \(S_2\), respectively, while class 3 corresponds to a common downstream queue served by \(S_3\). Accordingly, the state is \(x=[\,x_{1}\,\,x_{2}\,\,x_{3}\,]^\top \in \mathbb{R}_+^3\), where \(x_{1}\), \(x_{2}\), and \(x_{3}\) denote the workloads in queues 1, 2, and 3, respectively. Over one sampling period, a portion of the workload in queue 1 may remain there awaiting service, while another portion may be served and subsequently routed to queue 2 or directly to queue 3; similarly, a portion of the workload in queue 2 may remain there awaiting service, while another portion may be served and subsequently routed to queue 1 or directly to queue 3. See Figure~\ref{fig:mfqnet-four-mode} for a schematic representation.
\begin{figure}
\centering
\begin{tikzpicture}[>=stealth,thick,scale=0.92]

    \draw[rounded corners=4pt] (0.4,5.0) rectangle (2.6,6.2);
    \node[align=center,font=\bfseries] at (1.5,5.6) {External\\arrivals};

\draw[rounded corners=4pt] (4.0,5.00) rectangle (6.9,6.20);
\node[align=center,font=\bfseries] at (5.45,5.60) {Admission\\[-1pt]command \(u_t\)};

\draw[->] (2.6,5.6) -- (4.0,5.6);

    \node[font=\bfseries] at (12.20,6.30) {Job classes};

    \fill[blue!75] (11.05,5.62) rectangle (11.37,5.82);
    \node[anchor=west] at (11.70,5.72) {Class 1};

    \fill[red!75] (11.05,5.18) rectangle (11.37,5.38);
    \node[anchor=west] at (11.70,5.28) {Class 2};

    \fill[green!60!black] (11.05,4.74) rectangle (11.37,4.94);
    \node[anchor=west] at (11.70,4.84) {Class 3};


\draw[line width=1pt] (3.125,1.77) -- (3.125,3.67);
\draw[line width=1pt] (3.125,1.77) -- (4.225,1.77);
\draw[line width=1pt] (4.225,1.77) -- (4.225,3.67);
\node[font=\bfseries] at (3.675,2.09) {\(Q_1\)};

\draw[line width=1pt] (5.875,1.77) -- (5.875,3.67);
\draw[line width=1pt] (5.875,1.77) -- (6.975,1.77);
\draw[line width=1pt] (6.975,1.77) -- (6.975,3.67);
\node[font=\bfseries] at (6.425,2.09) {\(Q_2\)};

\draw[line width=1pt] (9.675,1.77) -- (9.675,3.67);
\draw[line width=1pt] (9.675,1.77) -- (10.775,1.77);
\draw[line width=1pt] (10.775,1.77) -- (10.775,3.67);
\node[font=\bfseries] at (10.225,2.09) {\(Q_3\)};

\fill[blue!75] (3.455,2.48) rectangle (3.895,2.69);
\fill[blue!75] (3.455,2.76) rectangle (3.895,2.97);
\fill[blue!75] (3.455,3.04) rectangle (3.895,3.25);
\fill[blue!75] (3.455,3.32) rectangle (3.895,3.53);

\fill[red!75] (6.205,2.58) rectangle (6.645,2.79);
\fill[red!75] (6.205,2.86) rectangle (6.645,3.07);

\fill[green!60!black] (10.005,2.48) rectangle (10.445,2.69);
\fill[green!60!black] (10.005,2.76) rectangle (10.445,2.97);
\fill[green!60!black] (10.005,3.04) rectangle (10.445,3.25);

\draw[->] (4.95,5.00) -- (4.95,4.72) -- (3.68,4.72) -- (3.68,3.67);
\draw[->] (6.05,5.00) -- (6.05,4.48) -- (6.42,4.48) -- (6.42,3.67);

    \draw[->] (1.6,3.00) -- (3.0,3.00);
    \node at (1.65,3.30) {\(w_{1}\)};

    \draw[->] (8.15,3.00) -- (7.10,3.00);
    \node at (8.28,3.30) {\(w_{2}\)};

    \draw[->] (11.85,3.00) -- (10.90,3.00);
    \node at (12.00,3.30) {\(w_{3}\)};

    \draw (3.68,1.00) circle (0.34);
    \draw (6.42,1.00) circle (0.34);
    \draw (10.22,1.00) circle (0.34);

    \node at (3.68,1.00) {\(S_1\)};
    \node at (6.42,1.00) {\(S_2\)};
    \node at (10.22,1.00) {\(S_3\)};

    \draw[->] (3.68,1.77) -- (3.68,1.34);
    \draw[->] (6.42,1.77) -- (6.42,1.34);
    \draw[->] (10.22,1.77) -- (10.22,1.34);


\draw[->]
    (4.02,1.00) -- (4.70,1.00) -- (4.70,1.45) -- (5.95,4.05) -- (6.30,4.05) -- (6.30,3.72);

\draw[->]
    (6.08,1.00) -- (5.40,1.00) -- (5.40,1.45) -- (4.15,4.05) -- (4.00,4.05) -- (4.00,3.72);

\draw[->]
    (3.68,0.66) -- (3.68,-0.10) -- (9.25,-0.10) -- (9.25,4.05) -- (10.12,4.05) -- (10.12,3.67);

\draw[->]
    (6.42,0.66) -- (6.42,0.10) -- (8.95,0.10) -- (8.95,4.28) -- (10.34,4.28) -- (10.34,3.67);

    \draw[->] (3.34,1.00) -- (1.95,1.00);
    \node[left,font=\bfseries] at (1.95,1.00) {departures};

\draw[->] (6.74,0.88) -- (7.4,0.88);
\node[right,font=\bfseries] at (7.4,0.88) { dep};

    \draw[->] (10.56,1.00) -- (11.90,1.00);
\node[right,font=\bfseries] at (11.90,1.00) {departures};

\end{tikzpicture}
\caption{Three queue multiclass job queueing network. The scalar admission command \(u_t\) regulates the external workload entering queues \(Q_1\) and \(Q_2\). Queues \(Q_1\), \(Q_2\), and \(Q_3\) are processed by servers \(S_1\), \(S_2\), and \(S_3\), respectively. A portion of the workload served from \(Q_1\) may be routed to \(Q_2\) or directly to the shared downstream queue \(Q_3\), while another portion may leave the modeled network. Similarly, a portion of the workload served from \(Q_2\) may be routed to \(Q_1\) or \(Q_3\), while another portion may leave the modeled network. The disturbances \(w_{1,t}\), \(w_{2,t}\), and \(w_{3,t}\) represent signed net workload mismatch in the three queues. Depending on the unknown mode, the same signed command \(u_t\) may either increase or decrease the amount of work admitted into the network.}
\label{fig:mfqnet-four-mode}
\end{figure}

We model the control action by a scalar input \(u\in\mathbb R\), interpreted as
a signed deviation from a nominal admission level for the class-1 and class-2
queues.
\begin{itemize}
\item[\emph{(+)}] \(u>0\): the admission command increases the incoming workload
admitted into the class-1 and class-2 queues relative to the nominal level;
\item[\emph{(--)}] \(u<0\): the admission command decreases the incoming workload
admitted into the class-1 and class-2 queues relative to the nominal level, so
that a larger portion of the external arrivals remains outside the modeled
network.
\end{itemize}
We take
\[
\mathcal U(x_t)=\{\,u_t\in\mathbb R:\ |u_t|\le Ex_t\,\},
\qquad
E=\begin{bmatrix}1&1&0\end{bmatrix},
\]
so that the magnitude of the control action cannot exceed the total workload
currently present in queues \(Q_1\) and \(Q_2\).
We consider four operating modes:
\begin{itemize}
\item[\emph{(i)}] \textbf{Mode 1:} The dynamics are given by \((A_1,B_1)\), and
a positive input increases the workload admitted into queues \(Q_1\) and \(Q_2\).

\item[\emph{(ii)}] \textbf{Mode 2:} The dynamics are given by \((A_2,B_2)\). As
in Mode~1, a positive input increases the workload admitted into queues \(Q_1\)
and \(Q_2\), but the baseline propagation through the network differs because
\(A_2 \neq A_1\).

\item[\emph{(iii)}] \textbf{Mode 3:} The dynamics are given by \((A_3,B_3)\),
and the sign of the input effect on the upstream queues is reversed: a positive
input decreases the workload admitted into queues \(Q_1\) and \(Q_2\).

\item[\emph{(iv)}] \textbf{Mode 4:} The dynamics are given by \((A_4,B_4)\). As
in Mode~3, a positive input decreases the workload admitted into queues \(Q_1\)
and \(Q_2\), but the baseline propagation through the network differs because
\(A_4 \neq A_3\).
\end{itemize}
The dynamics are
\[
x_{t+1}=A_i x_t + B_i u_t + w_t,
\qquad i\in\{1,2,3,4\},
\]
where
\[
\begingroup
\setlength{\arraycolsep}{2pt}
\renewcommand{\arraystretch}{0.9}
\begin{gathered}
A_1=\begin{bmatrix}
0.80 & 0.08 & 0\\
0.08 & 0.70 & 0\\
0.12 & 0.08 & 0.84
\end{bmatrix},\,
A_2=\begin{bmatrix}
0.68 & 0.08 & 0\\
0.08 & 0.78 & 0\\
0.12 & 0.10 & 0.86
\end{bmatrix},\,
A_3=\begin{bmatrix}
0.84 & 0.08 & 0\\
0.08 & 0.84 & 0\\
0.08 & 0.08 & 0.84
\end{bmatrix},\,
A_4=\begin{bmatrix}
0.82 & 0.08 & 0\\
0.08 & 0.84 & 0\\
0.06 & 0.08 & 0.84
\end{bmatrix},\\[6pt]
B_1=B_2=
\begin{bmatrix}
0.08 & 0.08 & 0
\end{bmatrix}^{\top},\quad
B_3=
\begin{bmatrix}
-0.08 & -0.08 & 0
\end{bmatrix}^{\top},\quad
B_4=
\begin{bmatrix}
-0.08 & -0.08 & 0
\end{bmatrix}^{\top}.
\end{gathered}
\endgroup
\]

These matrices have the following interpretation for each mode \(i\):
\begin{itemize}
\item the \(j\)-th column of \(A_i\) describes how the workload initially present in queue \(j\) is distributed after one sampling period;

\item the diagonal entry \((A_i)_{jj}\) represents the fraction of that workload that remains in queue \(j\), corresponding to work that is still awaiting service;

\item each off-diagonal entry \((A_i)_{\ell j}\), with \(\ell\neq j\), represents the fraction of the workload initially in queue \(j\) that is processed during the sampling period and then routed to queue \(\ell\).
\end{itemize}
Moreover, the column sums of \(A_i\) are at most one. The qunatity \(1-\sum_{\ell}(A_i)_{\ell j}\) represents the fraction of workload initially in queue \(j\) that is served during the sampling period and exits the modeled network. Thus, \(A_i\) captures the net one step effect of retention, service, routing, and departures in mode \(i\).

The vector \(B_i\) determines the \emph{effective direction} of the scalar
control in mode \(i\). In modes \(1\) and \(2\), \(B_i\geq 0\), so a positive
input \(u_t\) increases the workload admitted into queues \(Q_1\) and \(Q_2\).
In modes \(3\) and \(4\), this effect is reversed in the upstream queues, so
the same positive input decreases the workload admitted into queues \(Q_1\) and
\(Q_2\). In all four modes, the input has no direct effect on queue \(Q_3\),
since the third component of each \(B_i\) is zero. Thus, the controller does
not know in advance whether the unknown mode belongs to the release family
\(\{1,2\}\) or the diversion family \(\{3,4\}\).

We take \(s=[\,1\,\,1\,\,1\,]^\top\), \(r=\tfrac{1}{5}\), and
\(\gamma=[\,10\,\,8\,\,8\,]^\top\). The disturbance
\(w_t\in\mathbb{R}^3\) represents the part of the queue evolution not captured
by the nominal model \(A_i x_t+B_i u_t\), such as unmodeled arrivals,
departures, rerouting, or other unexpected changes over one sampling interval.
In particular:
\begin{itemize}
\item \(w_{j,t}>0\) means that queue \(Q_j\) contains more workload than
predicted by the nominal dynamics, for example due to extra arrivals, slower
service, or additional rerouting into that queue;
\item \(w_{j,t}<0\) means that queue \(Q_j\) contains less workload than
predicted by the nominal dynamics, for example due to cancellations,
abandonment, faster service, or rerouting out of that queue.
\end{itemize}
The only requirement is that the next state remain nonnegative, namely
$
w_t \ge -(A_i x_t + B_i u_t).
$
Assumption~\ref{ass:input-set} holds since direct computation gives
\[
\begingroup
\setlength{\arraycolsep}{2pt}
\renewcommand{\arraystretch}{0.9}
|B_1|E=|B_2|E=|B_3|E=|B_4|E=
\begin{bmatrix}
0.08 & 0.08 & 0\\
0.08 & 0.08 & 0\\
0 & 0 & 0
\end{bmatrix}.
\endgroup
\]
Each of these matrices is componentwise dominated by the corresponding
\(A_i\). Moreover, \(E^\top|r|=[\tfrac15\;\tfrac15\;0]^\top\) and
\(s=[1\;1\;1]^\top\), so \(s>E^\top|r|\) componentwise.
The model-based optimal controllers do not coincide across the four modes. Applying Algorithm~\ref{alg:recipe-ineq-cond-jk} to this four mode example yields the diagonal certificates
\[
\begin{aligned}
p_{11}&=\begin{bmatrix}5.54 & 3.42 & 6.25\end{bmatrix}^{\top},\qquad
p_{22}=\begin{bmatrix}4.14 & 5.05 & 7.14\end{bmatrix}^{\top},\\
p_{33}&=\begin{bmatrix}7.08 & 7.08 & 6.25\end{bmatrix}^{\top},\qquad
p_{44}=\begin{bmatrix}6.06 & 7.08 & 6.25\end{bmatrix}^{\top}.
\end{aligned}
\]
and the corresponding optimal known-mode gains
\[
K_1^\star=K_2^\star=-E,
\qquad
K_3^\star=K_4^\star=E.
\]
since \(r+B_1^\top p_{11}>0\), \(r+B_2^\top p_{22}>0\),
\(r+B_3^\top p_{33}<0\), and \(r+B_4^\top p_{44}<0\). Hence modes \(1\) and \(2\)
favor one sign of the scalar input, while modes \(3\) and \(4\) favor the opposite sign. In particular, applying the controller designed for modes \(1\) and \(2\) in modes
\(3\) and \(4\) yields
\[
\rho(A_3-B_3E)=1.0800,
\qquad
\rho(A_4-B_4E)=1.0703,
\]
so the corresponding closed-loop systems are not asymptotically stable. 
The symmetric mixed cost vectors are then chosen as
\[
p_{12}=p_{21}=
\begin{bmatrix}5.5357 & 5.0476 & 7.1429\end{bmatrix}^{\top},\qquad
p_{13}=p_{31}=
\begin{bmatrix}8.9500 & 8.9500 & 6.2500\end{bmatrix}^{\top},
\]
\[
p_{14}=p_{41}=
\begin{bmatrix}7.7603 & 8.7859 & 6.2500\end{bmatrix}^{\top},\qquad
p_{23}=p_{32}=
\begin{bmatrix}8.9500 & 8.9500 & 7.1429\end{bmatrix}^{\top},
\]
\[
p_{24}=p_{42}=
\begin{bmatrix}7.7603 & 8.7859 & 7.1429\end{bmatrix}^{\top},\qquad
p_{34}=p_{43}=
\begin{bmatrix}7.0833 & 7.0833 & 6.2500\end{bmatrix}^{\top}.
\]
The remaining gain bound \(\gamma\) is chosen sufficiently large so that
\(0\leq p_{ij}\leq \gamma\) for all \(i,j\in\{1,\ldots,4\}\) and the remaining
inequalities in~\eqref{ineq-cond-jk} hold; one convenient feasible choice is
$\setlength{\arraycolsep}{2pt}
\gamma=\begin{bmatrix}15&15&15\end{bmatrix}^{\top}.
$
Simulations
of the control law~\eqref{eq:cert-unknown-sign} are shown in
Figure~\ref{fig:queue_workloads_and_admission}. As noted earlier, the adaptive
policy initially explores in order to learn the true dynamics. At time \(t=30\),
the true dynamics switch; in response, the controller again explores sufficiently
to identify the new mode and then collapses back to the corresponding model-based
optimal control law. For more details on the implementation, refer to the
code\footnote{Code available at:
\href{https://github.com/Fethi-Bencherki/minimax-adaptive-control-of-positive-systems}{https://github.com/Fethi-Bencherki/minimax-adaptive-control-of-positive-systems}}.
\begin{figure}[!htbp]
   \centering
    \captionsetup[subfigure]{skip=0pt}
    \begin{subfigure}[t]{0.49\textwidth}
        \centering
        \input{plots/queue/queue_1_workload.tex}
        \label{fig:queue1_workload}
    \end{subfigure}\hspace{0.005\textwidth}
        \vspace{-0.35em}
    \begin{subfigure}[t]{0.49\textwidth}
        \centering
        \input{plots/queue/queue_2_workload.tex}
        \label{fig:queue2_workload}
    \end{subfigure}
        \vspace{-0.35em}
    \begin{subfigure}[t]{0.49\textwidth}
        \centering
        \input{plots/queue/queue_3_workload.tex}
        \label{fig:queue3_workload}
    \end{subfigure}\hspace{0.005\textwidth}
        \vspace{-0.35em}
    \begin{subfigure}[t]{0.49\textwidth}
        \centering
        \input{plots/queue/admission_command.tex}
        \label{fig:admission_command}
    \end{subfigure}
        \vspace{-0.35em}
    \begin{subfigure}[t]{0.49\textwidth}
        \centering
        \input{plots/queue/history_variables.tex}
        \label{fig:queue_history_variables}
    \end{subfigure}
        \vspace{-0.35em}
    \begin{subfigure}[t]{0.49\textwidth}
        \centering
        \input{plots/queue/mode_estimate.tex}
        \label{fig:queue_mode_estimate}
    \end{subfigure}\hspace{0.005\textwidth}
        \vspace{-0.35em}
\caption{Closed loop trajectories under the adaptive certainty equivalence controller and the optimal controller with known mode. The dashed vertical line indicates the mode switching time. The figures show the workloads in queues \(Q_1\)--\(Q_3\), the admission command, the history variables, and the true and estimated modes.}
    \label{fig:queue_workloads_and_admission}
\end{figure}
 \end{example}
\subsection{Adversary policy in the learning setting}
Consider a two model uncertainty set
$\mathcal{M}=\{(a_1,b_1),\, (a_2,b_2)\}$.
For a prespecified input constraint set \(\mathcal{U}(x)\), let \(K_1\) and \(K_2\) denote the optimal
state feedback gains for the plants \((a_1,b_1)\) and \((a_2,b_2)\), respectively, where
\(K_1,\,K_2 \in \mathcal{K}(E)\).
Fix indices \(i,k\in\{1,2\}\). Suppose the true plant is \((a_i,b_i)\) while the controller applies the
state feedback law \(u = K_k x\).
As shown in the proof of Theorem~\ref{thm1} (cf.\ \eqref{advers}), when \(\gamma\) is above a suitable threshold,
the adversary selects
\[
v^\ast \;=\; \max\{(a_1+b_1K_k)x,\ (a_2+b_2K_k)x\}=\max_{\ell\in\{1,2\}}a_\ell+b_\ell K_k.
\]
Relative to the true plant \((a_i,b_i)\), the induced disturbance is
\begin{align*}
w^\ast(i,k)
\;=\; v^\ast - (a_i+b_iK_k)x 
\;=\; \Big(\max_{\ell\in\{1,2\}}(a_\ell+b_\ell K_k) - (a_i+b_iK_k)\Big)x.
\end{align*}
Since each gain \(K_i\) is chosen to optimally stabilize its own plant, it minimizes the resulting closed loop factor for that plant over the admissible class \(\mathcal{K}(E)\). In particular, for all \(\ell \neq i\),
\[
a_i + b_i K_i \leq a_i + b_i K_\ell,
\qquad \forall\, K_i, K_\ell \in \mathcal{K}(E).
\]
Let \(j\neq i\) denote the other index in \(\{1,2\}\). Two cases arise.
\begin{itemize}
\item[\textit{(a)}] \textbf{Matched gain (\(k=i\neq j\)).}
In this case, the adversary picks
\begin{multline*}
w^\ast(i,i)
= \Big(\max\{a_i+b_iK_i,\ a_j+b_jK_i\} - (a_i+b_iK_i)\Big)x 
= \big((a_j+b_jK_i)-(a_i+b_iK_i)\big)x\ge 0,
\end{multline*}
i.e., the adversary switches to the other model whenever doing so increases the next state.

\item[\textit{(b)}] \textbf{Mismatched gain (\(k=j\neq i\)).}
Since \(j=\arg\max_{\ell\in\{1,2\}}(a_\ell+b_\ell K_j)\), we have
\begin{multline*} 
w^\ast(i,j) = \Big(\max\{a_i+b_i K_j,\ a_j+b_j K_j\} - (a_i+b_i K_j)\Big)x  = \big((a_i+b_i K_j) - (a_i+b_i K_j)\big)x \;=\; 0, 
\end{multline*}
the adversary reinforces the controller’s incorrect gain choice by setting the disturbance to zero, since the resulting closed loop factor is already the larger one.
\end{itemize} 
As a result, the adversary may deviate from the model-based case in~\cite{gurpegui2025minimax}, where the choice \(w^\ast=0\) is independent of the controller’s gain selection. This aspect is specific to our \emph{learning} setup: because the controller must infer which model is active, the adversary can exploit mismatches between the true plant and the applied gain to make the observed dynamics appear more consistent with the \emph{wrong} model. In this sense, it actively complicates identification by steering the next state toward the model that maximizes \(\bigl(a_\ell+b_\ell K_k\bigr)x\).

%% file: plots/water_tank/state_x1.tex
 \begin{tikzpicture}
\begin{axis}[
    width=\linewidth,
    height=0.6\linewidth,
    xlabel={$t$},
    ylabel={$x_{1}$},
    grid=both,
    tick align=outside,
    legend style={
        at={(0.5,1.08)},
        anchor=south,
        legend columns=2,
        draw=black,
        fill=white,
       font=\small
    },
    legend cell align={left},
]
        \addplot+[mark=*] coordinates {(0,1) (1,0.49474876) (2,0.68733967) (3,0) (4,0) (5,1.3439971) (6,2.3440191) (7,2.8190352) (8,1.1210428) (9,1.9641767) (10,6.2143308) (11,6.9460107) (12,5.5666558) (13,4.4092257) (14,1.8676429) (15,1.7223847) (16,2.2614061) (17,1.0705342) (18,0.0024454486) (19,0.51558941) (20,2.5058489) (21,2.8551521) (22,2.2465665) (23,0) (24,1.1130661) (25,0) (26,0.48821182) (27,0) (28,0) (29,0.3761409) (30,0.46682014) (31,1.401062) (32,0.94777825) (33,1.0264468) (34,4.4531649) (35,3.6498707) (36,6.2721451) (37,6.5322779) (38,5.7162134) (39,4.0275574)};
        \addlegendentry{Adaptive}
        \addplot+[mark=square*, dashed] coordinates {(0,1) (1,1.2147488) (2,1.2554969) (3,0.65101132) (4,0.43488418) (5,1.648416) (6,3.0947112) (7,3.6004549) (8,1.3832449) (9,0.47586182) (10,1.0807763) (11,2.1250023) (12,1.8236938) (13,1.7891523) (14,0.033591592) (15,0.43854871) (16,1.3627209) (17,0.44145456) (18,0) (19,0.51387759) (20,2.5046507) (21,2.8543133) (22,2.2459794) (23,0) (24,1.1130661) (25,0) (26,0.48821182) (27,0) (28,0) (29,0.3761409) (30,0.46682014) (31,1.401062) (32,0.94777825) (33,1.0264468) (34,4.4531649) (35,3.6498707) (36,6.2721451) (37,6.5322779) (38,5.7162134) (39,4.0275574)};
        \addlegendentry{Optimal}
    \end{axis}
    \end{tikzpicture}

%% file: plots/water_tank/state_x2.tex
 \begin{tikzpicture}
\begin{axis}[
    width=\linewidth,
    height=0.6\linewidth,
    xlabel={$t$},
    ylabel={$x_{2}$},
    grid=both,
    tick align=outside,
    legend style={
        at={(0.5,1.08)},
        anchor=south,
        legend columns=2,
        draw=black,
        fill=white,
           font=\small
    },
    legend cell align={left},
]
        \addplot+[mark=*] coordinates {(0,0.8) (1,0.19106037) (2,0) (3,2.279428) (4,0) (5,0) (6,1.2526127) (7,2.0177224) (8,3.0289621) (9,3.7197583) (10,1.6998065) (11,0.35191025) (12,0) (13,0) (14,0.0074828504) (15,1.3366933) (16,0) (17,0.058162867) (18,0) (19,0.71051223) (20,0) (21,0) (22,0) (23,0) (24,0) (25,0.20148542) (26,0) (27,0) (28,0) (29,0) (30,0) (31,0) (32,0.38203838) (33,1.7925752) (34,0) (35,2.4153538) (36,1.1476701) (37,1.2566245) (38,1.0826131) (39,0)};
        \addlegendentry{Adaptive}
        \addplot+[mark=square*, dashed] coordinates {(0,0.8) (1,0) (2,0) (3,1.7295562) (4,0) (5,0) (6,0.17741499) (7,0) (8,0) (9,0) (10,0.58387897) (11,0.017132007) (12,0) (13,0) (14,0.0074828504) (15,1.3366933) (16,0) (17,0.058162867) (18,0) (19,0.71051223) (20,0) (21,0) (22,0) (23,0) (24,0) (25,0.20148542) (26,0) (27,0) (28,0) (29,0) (30,0) (31,0) (32,0.38203838) (33,1.7925752) (34,0) (35,2.4153538) (36,1.1476701) (37,1.2566245) (38,1.0826131) (39,0)};
        \addlegendentry{Optimal}
    \end{axis}
    \end{tikzpicture}

%% file: plots/water_tank/control_input.tex
 \begin{tikzpicture}
\begin{axis}[
    width=\linewidth,
    height=0.6\linewidth,
    xlabel={$t$},
    ylabel={$u_{t}$},
    grid=both,
    tick align=outside,
    legend style={
        at={(0.5,1.08)},
        anchor=south,
        legend columns=2,
        draw=black,
        fill=white,
           font=\small
    },
    legend cell align={left},
]
        \addplot+[mark=*] coordinates {(0,1.8) (1,0.68580913) (2,0.68733967) (3,2.279428) (4,0) (5,1.3439971) (6,3.5966318) (7,4.8367576) (8,4.1500048) (9,-5.683935) (10,-7.9141373) (11,-7.297921) (12,-5.5666558) (13,-4.4092257) (14,-1.8751258) (15,-3.059078) (16,-2.2614061) (17,-1.128697) (18,-0.0024454486) (19,-1.2261016) (20,-2.5058489) (21,-2.8551521) (22,-2.2465665) (23,0) (24,-1.1130661) (25,-0.20148542) (26,-0.48821182) (27,0) (28,0) (29,-0.3761409) (30,-0.46682014) (31,-1.401062) (32,-1.3298166) (33,-2.8190219) (34,-4.4531649) (35,-6.0652245) (36,-7.4198152) (37,-7.7889024) (38,-6.7988265) (39,-4.0275574)};
        \addlegendentry{Adaptive}
        \addplot+[mark=square*, dashed] coordinates {(0,-1.8) (1,-1.2147488) (2,-1.2554969) (3,-2.3805675) (4,-0.43488418) (5,-1.648416) (6,-3.2721262) (7,-3.6004549) (8,-1.3832449) (9,-0.47586182) (10,-1.6646553) (11,-2.1421343) (12,-1.8236938) (13,-1.7891523) (14,-0.041074443) (15,-1.775242) (16,-1.3627209) (17,-0.49961743) (18,0) (19,-1.2243898) (20,-2.5046507) (21,-2.8543133) (22,-2.2459794) (23,0) (24,-1.1130661) (25,-0.20148542) (26,-0.48821182) (27,0) (28,0) (29,-0.3761409) (30,-0.46682014) (31,-1.401062) (32,-1.3298166) (33,-2.8190219) (34,-4.4531649) (35,-6.0652245) (36,-7.4198152) (37,-7.7889024) (38,-6.7988265) (39,-4.0275574)};
        \addlegendentry{Optimal}
    \end{axis}
    \end{tikzpicture}

%% file: plots/water_tank/disturbance.tex
  \begin{tikzpicture}
\begin{axis}[
    width=\linewidth,
    height=0.62\linewidth,
    xlabel={$t$},
    ylabel={$w_{t}$},
    grid=both,
    tick align=outside,
    legend style={
        at={(0.5,1.08)},
        anchor=south,
        legend columns=2,
        draw=black,
        fill=white,
           font=\small
    },
    legend cell align={left},
]
        \addplot+[mark=*] coordinates {(0,-0.36525124) (1,0.40517279) (2,-0.2062019) (3,-1.5955996) (4,1.3439971) (5,1.9408199) (6,1.2390006) (7,-1.1370735) (8,-0.49240961) (9,0.74767302) (10,0.72619206) (11,0.317347) (12,0.5125666) (13,-1.218815) (14,0.40680346) (15,-0.41462584) (16,-0.51245007) (17,-0.81090763) (18,0.51387759) (19,1.3633729) (20,1.1010579) (21,0.24796003) (22,-1.5725966) (23,1.1130661) (24,-0.77914628) (25,0.26657785) (26,-0.34174827) (27,-0) (28,0.3761409) (29,0.20352151) (30,1.0742879) (31,-0.032965154) (32,-0.057240238) (33,1.7628194) (34,0.53265527) (35,1.0603464) (36,0.87933927) (37,-0.23866814) (38,-1.1646664) (39,0.28455163)};
        \addlegendentry{$w_{t,1}$}
        \addplot+[mark=square*] coordinates {(0,-1.4889396) (1,-0.60596541) (2,1.7295562) (3,-2.5073707) (4,-0) (5,0.17741499) (6,-1.2353668) (7,-1.4457608) (8,-0.50893417) (9,0.58387897) (10,-0.15803168) (11,-0.10557308) (12,-0) (13,0.0074828504) (14,1.3344485) (15,-0.40100799) (16,0.058162867) (17,-0.01744886) (18,0.71051223) (19,-0.21315367) (20,-0) (21,-0) (22,-0) (23,-0) (24,0.20148542) (25,-0.060445627) (26,-0) (27,-0) (28,-0) (29,-0) (30,-0) (31,0.38203838) (32,1.6779637) (33,-0.53777256) (34,2.4153538) (35,0.42306395) (36,0.91232346) (37,0.70562578) (38,-0.32478394) (39,-0)};
        \addlegendentry{$w_{t,2}$}
    \end{axis}
    \end{tikzpicture}

%% file: plots/water_tank/history_variables.tex
\begin{tikzpicture}
    \begin{axis}[
    width=\linewidth,
    height=0.6\linewidth,
    xlabel={$t$},
    ylabel={$z_t^{(\sigma)}$},
    grid=both,
    tick align=outside,
    legend style={
        at={(0.5,1.08)},
        anchor=south,
        legend columns=2,
        draw=black,
        fill=white,
           font=\small
    },
    legend cell align={left},
]
        \addplot+[mark=*] coordinates {(0,10.339601) (1,12.871687) (2,73.898943) (3,112.05657) (4,123.34614) (5,166.44852) (6,209.17406) (7,295.56812) (8,383.92512) (9,508.38532) (10,703.31037) (11,879.09484) (12,1013.4375) (13,1106.0119) (14,1119.8708) (15,1197.8732) (16,1244.9409) (17,1270.9703) (18,1293.009) (19,1338.4319) (20,1406.2174) (21,1474.9966) (22,1525.5893) (23,1534.9391) (24,1554.9682) (25,1563.4253) (26,1574.4198) (27,1574.4198) (28,1577.5794) (29,1588.0756) (30,1608.0046) (31,1630.9055) (32,1650.2456) (33,1744.35) (34,1792.4664) (35,1932.4803) (36,2090.3856) (37,2252.6889) (38,2409.8459) (39,2506.3198)};
        \addlegendentry{$z^{(+)}$}
        \addplot+[mark=square*] coordinates {(0,40.291601) (1,58.844188) (2,103.81519) (3,179.90249) (4,191.19207) (5,211.93033) (6,253.22211) (7,298.91754) (8,315.77714) (9,336.65457) (10,346.70537) (11,352.01041) (12,356.31597) (13,366.74109) (14,403.51945) (15,417.02751) (16,422.78616) (17,430.034) (18,452.11338) (19,468.89456) (20,478.14344) (21,480.22631) (22,493.43612) (23,502.78587) (24,514.36784) (25,518.11823) (26,520.98892) (27,520.98892) (28,524.1485) (29,525.85808) (30,534.8821) (31,544.70997) (32,587.13988) (33,615.39187) (34,680.25002) (35,699.73353) (36,729.92807) (37,749.57353) (38,767.47632) (39,769.86656)};
        \addlegendentry{$z^{(-)}$}
    \end{axis}
    \end{tikzpicture}

%% file: plots/water_tank/sign_estimate.tex
 \begin{tikzpicture}
\begin{axis}[
    width=\linewidth,
    height=0.6\linewidth,
    xlabel={$t$},
    ylabel={sign},
    grid=both,
    tick align=outside,
    legend style={
        at={(0.5,1.08)},
        anchor=south,
        legend columns=2,
        draw=black,
        fill=white,
           font=\small
    },
    legend cell align={left},
    ytick={-1,0,1},
    yticklabels={$-1$,$0$,$+1$},
]
        \addplot+[const plot, mark=none] coordinates {(0,-1) (1,-1) (2,-1) (3,-1) (4,-1) (5,-1) (6,-1) (7,-1) (8,-1) (9,-1) (10,-1) (11,-1) (12,-1) (13,-1) (14,-1) (15,-1) (16,-1) (17,-1) (18,-1) (19,-1) (20,-1) (21,-1) (22,-1) (23,-1) (24,-1) (25,-1) (26,-1) (27,-1) (28,-1) (29,-1) (30,-1) (31,-1) (32,-1) (33,-1) (34,-1) (35,-1) (36,-1) (37,-1) (38,-1) (39,-1)};
        \addlegendentry{Estimated sign}
        \addplot+[const plot, mark=*] coordinates {(0,1) (1,1) (2,1) (3,1) (4,1) (5,1) (6,1) (7,1) (8,-1) (9,-1) (10,-1) (11,-1) (12,-1) (13,-1) (14,-1) (15,-1) (16,-1) (17,-1) (18,-1) (19,-1) (20,-1) (21,-1) (22,-1) (23,-1) (24,-1) (25,-1) (26,-1) (27,-1) (28,-1) (29,-1) (30,-1) (31,-1) (32,-1) (33,-1) (34,-1) (35,-1) (36,-1) (37,-1) (38,-1) (39,-1)};
        \addlegendentry{True sign}
    \end{axis}
    \end{tikzpicture}

%% file: plots/queue/queue_1_workload.tex
\begin{tikzpicture}

\definecolor{darkgrey176}{RGB}{176,176,176}

\begin{axis}[
    width=1.05\linewidth,
    height=0.6\linewidth,
    grid=both,
    tick align=outside,
    legend style={
        at={(0.5,1.08)},
        anchor=south,
        legend columns=3,
        draw=black,
        fill=white,
        font=\small
    },
    legend cell align={left},
    tick pos=left,
    x grid style={darkgrey176},
    xlabel={$t$},
    xmajorgrids,
    xmin=-2.95, xmax=61.95,
    xtick style={color=black},
    y grid style={darkgrey176},
    ylabel={$x_1$},
    ymajorgrids,
    ymin=-0.249830222383749, ymax=5.24643467005874,
    ytick style={color=black}
]

\addplot+[
    blue,
    mark=*,
    mark size=1.4pt,
    mark options={solid, fill=blue, draw=blue},
    line width=1.12pt
]
table {%
0 1.2
1 2.50060451532162
2 1.37166531362362
3 1.17312708098341
4 1.47454946777324
5 0.32232903395898
6 0.927869066273487
7 3.15053975617251
8 3.89533325720889
9 2.7547886679496
10 3.93787430354505
11 3.56717070805575
12 3.91814098883289
13 4.99660444767499
14 3.76595685612096
15 2.17456925271618
16 1.75399373085322
17 2.75439707205308
18 0
19 0
20 0.247348439918604
21 0.745593339508123
22 0.840521376947025
23 0
24 2.29355258349946
25 0.706431382629372
26 0.699617000641067
27 1.23581343454705
28 0.247931359158294
29 1.04166612849017
30 1.54677829007982
31 2.72355804575763
32 3.65784062897248
33 2.72368844675516
34 2.18178966886806
35 1.36159571938876
36 0.966834812340677
37 0.975118111026548
38 2.79759635905081
39 2.53030055591141
40 2.40065811593073
41 2.17884901282976
42 3.22837277826182
43 3.17626582117595
44 2.7780702754516
45 1.72138952174391
46 0
47 0
48 0
49 0.0201275621938371
50 0
51 0.514573468706945
52 0
53 0.274967174520659
54 0.700042190927272
55 2.01893859324751
56 1.2482442121966
57 0.967469468960278
58 1.34346970811795
59 0.0918443665152762
};
\addlegendentry{Adaptive}

\addplot+[
    red,
    dashed,
    mark=square*,
    mark size=1.4pt,
    line width=1.12pt,
    mark options={solid, fill=red, draw=red}
]
table {%
0 1.2
1 2.50060451532162
2 0.950596448176717
3 0.65049104768186
4 0.910551190838781
5 0
6 0.644219516389584
7 2.7976166541524
8 2.78132166056277
9 1.01147652569715
10 2.07420976298193
11 1.46848896928311
12 1.83634282362766
13 2.61590479368032
14 2.0518531052448
15 0.940414552085347
16 0.865402346399023
17 2.11461127524606
18 0
19 0
20 0.247348439918604
21 0.745593339508123
22 0.840521376947025
23 0
24 2.29355258349946
25 0.706431382629372
26 0.699617000641067
27 1.23581343454705
28 0.247931359158294
29 1.04166612849017
30 1.54677829007982
31 2.47607351934486
32 3.02842418496904
33 1.67266577755702
34 0.968242742180616
35 0.114484646621162
36 0
37 0.0845760821654239
38 1.84449610654716
39 1.31085585828575
40 0.8521415431814
41 0.1849255894672
42 0.69189553632919
43 0
44 0
45 0
46 0
47 0
48 0
49 0.0201275621938371
50 0
51 0.514573468706945
52 0
53 0.274967174520659
54 0.700042190927272
55 2.01893859324751
56 1.2482442121966
57 0.967469468960278
58 1.34346970811795
59 0.0918443665152762
};
\addlegendentry{Optimal}

\addplot [
    black,
    thick,
    opacity=0.9,
    dash pattern=on 7.4pt off 3.2pt
]
table {%
30 -0.249830222383749
30 5.24643467005873
};
\addlegendentry{Mode switch}
\end{axis}
\end{tikzpicture}

%% file: plots/queue/queue_2_workload.tex
\begin{tikzpicture}

\definecolor{darkgrey176}{RGB}{176,176,176}

\begin{axis}[
    width=\linewidth,
    height=0.6\linewidth,
    grid=both,
    tick align=outside,
    legend style={
        at={(0.5,1.08)},
        anchor=south,
        legend columns=3,
        draw=black,
        fill=white,
        font=\small
    },
    legend cell align={left},
    tick pos=left,
    title={Queue 2 workload},
    x grid style={darkgrey176},
    xlabel={$t$},
    xmajorgrids,
    xmin=-2.95, xmax=61.95,
    xtick style={color=black},
    y grid style={darkgrey176},
    ylabel={$x_2$},
    ymajorgrids,
    ymin=-0.278690491791891, ymax=5.85250032762972,
    ytick style={color=black}
]

\addplot+[
    blue,
    mark=*,
     mark size=1.4pt,
    mark options={solid, fill=blue, draw=blue},
    line width=1.12pt
]
table {%
0 1
1 0.131075893721506
2 0
3 0
4 0
5 0
6 0.00147734687462651
7 2.22387876377527
8 1.98731544696138
9 1.04821007043384
10 0.792396131249973
11 0
12 1.59314010521025
13 1.41880992818503
14 1.3397531682206
15 0.766933639422664
16 0.793673071067765
17 0.443875250490444
18 0
19 1.60622152798083
20 1.19845934120301
21 0
22 0
23 0.441910335092597
24 0.176543035767935
25 0.627425519438257
26 0.65645603765558
27 0.125768745296336
28 0
29 0
30 0
31 0.0656787946047844
32 0
33 0
34 0
35 0
36 0.127441860965718
37 0.863001583638548
38 0.415844333280201
39 1.50799629725321
40 2.89947413111961
41 4.45223791619738
42 5.57380983583783
43 4.71147689506871
44 4.38575276784696
45 3.66770068506931
46 3.17282204279138
47 3.0393957877812
48 2.59204366961619
49 0.980606257565638
50 0
51 0.259331648532868
52 1.04687575425939
53 0.509840540081564
54 0.638459474015135
55 0
56 0
57 0.868520215776381
58 1.51091391740857
59 0.123708870383545
};
\addlegendentry{Adaptive}

\addplot+[
    red,
    dashed,
    mark=square*,
    mark size=1.4pt,
    line width=1.12pt,
    mark options={solid, fill=red, draw=red}
]
table {%
0 1
1 0.131075893721506
2 0
3 0
4 0
5 0
6 0
7 2.07426738260931
8 1.03464942744684
9 0
10 0
11 0
12 1.02239279192133
13 0.183141618898995
14 0.573638816463255
15 0.291942741333113
16 0.499178714252243
17 0.26128874926482
18 0
19 1.60622152798083
20 1.19845934120301
21 0
22 0
23 0.441910335092597
24 0.176543035767935
25 0.627425519438257
26 0.65645603765558
27 0.125768745296336
28 0
29 0
30 0
31 0
32 0
33 0
34 0
35 0
36 0
37 0.591061501575579
38 0
39 0.677804093187298
40 1.62240055952318
41 2.63364084225603
42 3.13070215099806
43 1.44636583633455
44 0.642229528609849
45 0.822623023249108
46 1.01056301980803
47 1.39607893031385
48 1.343122857941
49 0.0314264406924977
50 0
51 0.259331648532868
52 1.04687575425939
53 0.509840540081564
54 0.638459474015135
55 0
56 0
57 0.868520215776381
58 1.51091391740857
59 0.123708870383545
};
\addlegendentry{Optimal}

\addplot [
    black,
    thick,
    opacity=0.9,
    dash pattern=on 7.4pt off 3.2pt
]
table {%
30 -0.278690491791891
30 5.85250032762972
};
\addlegendentry{Mode switch}

\end{axis}

\end{tikzpicture}

%% file: plots/queue/queue_3_workload.tex
\begin{tikzpicture}

\definecolor{darkgrey176}{RGB}{176,176,176}

\begin{axis}[
    width=\linewidth,
    height=0.6\linewidth,
    grid=both,
    tick align=outside,
    legend style={
        at={(0.5,1.08)},
        anchor=south,
        legend columns=3,
        draw=black,
        fill=white,
        font=\small
    },
    legend cell align={left},
    tick pos=left,
    x grid style={darkgrey176},
    xlabel={$t$},
    xmajorgrids,
    xmin=-2.95, xmax=61.95,
    xtick style={color=black},
    y grid style={darkgrey176},
    ylabel={$x_3$},
    ymajorgrids,
    ymin=-0.207722903541419, ymax=4.36218097436981,
    ytick style={color=black}
]

\addplot+[
    blue,
    mark=*,
    mark size=1.4pt,
    mark options={solid, fill=blue, draw=blue},
    line width=1.12pt
]
table {%
0 0.8
1 1.47318310315731
2 1.44680033802691
3 1.55637347273908
4 0.882497202927281
5 0.812680996922404
6 1.12709823035552
7 0
8 1.79383076913293
9 3.91800564391244
10 3.20461499884044
11 2.13025786923209
12 2.97931923755585
13 4.15445807082839
14 4.10983526320791
15 3.63529897931443
16 3.53194251526454
17 3.46386106823347
18 0.559824255868807
19 1.33674744156912
20 2.26292905322859
21 1.1144790405704
22 0.000604112985301386
23 0.299430381529189
24 0
25 0.232396501820151
26 0
27 0.983339198488164
28 2.29409608598148
29 3.20717664518768
30 2.38039326042513
31 1.02481892277005
32 0.680745247876593
33 2.75190993258916
34 1.66307498277322
35 0.791791930332765
36 0
37 0
38 0
39 0.808789612767101
40 1.73402504824756
41 0
42 0
43 0
44 0
45 1.37413987159161
46 3.63941417368617
47 2.4266700363511
48 2.37428955170358
49 2.82771289459249
50 2.90339036076443
51 3.18529654705152
52 1.73912008897235
53 1.27629858694968
54 2.54396062212139
55 1.64303523598666
56 1.48637780081352
57 2.23761966195545
58 2.26782500199034
59 1.1007906271349
};
\addlegendentry{Adaptive}

\addplot+[
    red,
    dashed,
    mark=square*,
    mark size=1.4pt,
    line width=1.12pt,
    mark options={solid, fill=red, draw=red}
]
table {%
0 0.8
1 1.47318310315731
2 1.13099868894173
3 1.10123611794588
4 0.328048413184895
5 0.136158180789943
6 0.500799838691641
7 0
8 1.12473293299467
9 2.56415160086681
10 1.48545482413227
11 0
12 0.635921238107843
13 1.38832718382879
14 1.61872997270285
15 1.42460202320247
16 1.59140797213084
17 1.77460668179761
18 0
19 0.866495066639322
20 1.86791705828756
21 0.782668964819934
22 0
23 0.298922926621536
24 0
25 0.232396501820151
26 0
27 0.983339198488164
28 2.29409608598148
29 3.20717664518768
30 2.38039326042513
31 1.21043231757962
32 1.13247574594593
33 3.49477445316369
34 2.48780107336266
35 1.60075097548957
36 0.511396695762628
37 0
38 0
39 1.02181225888716
40 2.13499942108385
41 0
42 0
43 0.0354316287015767
44 0
45 0.516678185044222
46 2.31426874172227
47 1.01083161028376
48 0.954920913761582
49 1.46059432508669
50 1.62212558801731
51 2.10903413794394
52 0.835059665321985
53 0.516887831083376
54 1.9060555871937
55 1.1071950066474
56 1.03627200816853
57 1.85953079613366
58 1.95023035470004
59 0.834011123411043
};
\addlegendentry{Optimal}

\addplot [
    black,
    thick,
    opacity=0.9,
    dash pattern=on 7.4pt off 3.2pt
]
table {%
30 -0.207722903541419
30 4.36218097436981
};
\addlegendentry{Mode switch}

\end{axis}

\end{tikzpicture}

%% file: plots/queue/admission_command.tex
\begin{tikzpicture}

\definecolor{darkgrey176}{RGB}{176,176,176}

\begin{axis}[
    width=\linewidth,
    height=0.6\linewidth,
    grid=both,
    tick align=outside,
    legend style={
        at={(0.5,1.08)},
        anchor=south,
        legend columns=3,
        draw=black,
        fill=white,
        font=\small
    },
    legend cell align={left},
    tick pos=left,
    title={Admission command},
    x grid style={darkgrey176},
    xlabel={$t$},
    ylabel={$u$},
    xmajorgrids,
    xmin=-2.95, xmax=61.95,
    xtick style={color=black},
    y grid style={darkgrey176},
    ymajorgrids,
    ymin=-9.60048289696956, ymax=7.96212332616846,
    ytick style={color=black}
]

\addplot+[
    blue,
    mark=*,
    mark size=1.4pt,
    mark options={solid, fill=blue, draw=blue},
    line width=1.12pt
]
table {%
0 -2.2
1 2.63168040904313
2 1.37166531362362
3 1.17312708098341
4 1.47454946777324
5 0.32232903395898
6 0.929346413148113
7 5.37441851994778
8 5.88264870417027
9 3.80299873838345
10 4.73027043479502
11 3.56717070805575
12 5.51128109404314
13 -6.41541437586002
14 -5.10571002434155
15 -2.94150289213885
16 -2.54766680192099
17 -3.19827232254353
18 0
19 -1.60622152798083
20 -1.44580778112161
21 -0.745593339508123
22 -0.840521376947025
23 -0.441910335092597
24 -2.47009561926739
25 -1.33385690206763
26 -1.35607303829665
27 -1.36158217984339
28 -0.247931359158294
29 -1.04166612849017
30 -1.54677829007982
31 -2.78923684036241
32 -3.65784062897248
33 -2.72368844675516
34 -2.18178966886806
35 -1.36159571938876
36 -1.09427667330639
37 -1.8381196946651
38 -3.21344069233101
39 -4.03829685316463
40 -5.30013224705034
41 -6.63108692902714
42 -8.80218261409965
43 -7.88774271624466
44 7.16382304329855
45 5.38909020681322
46 3.17282204279138
47 3.0393957877812
48 2.59204366961619
49 1.00073381975947
50 0
51 0.773905117239813
52 1.04687575425939
53 0.784807714602223
54 1.33850166494241
55 2.01893859324751
56 1.2482442121966
57 1.83598968473666
58 2.85438362552652
59 0.215553236898821
};
\addlegendentry{Adaptive}

\addplot+[
    red,
    dashed,
    mark=square*,
    mark size=1.4pt,
    line width=1.12pt,
    mark options={solid, fill=red, draw=red}
]
table {%
0 -2.2
1 -2.63168040904313
2 -0.950596448176717
3 -0.65049104768186
4 -0.910551190838781
5 0
6 -0.644219516389584
7 -4.87188403676171
8 -3.81597108800961
9 -1.01147652569715
10 -2.07420976298193
11 -1.46848896928311
12 -2.85873561554899
13 -2.79904641257932
14 -2.62549192170805
15 -1.23235729341846
16 -1.36458106065127
17 -2.37590002451088
18 0
19 -1.60622152798083
20 -1.44580778112161
21 -0.745593339508123
22 -0.840521376947025
23 -0.441910335092597
24 -2.47009561926739
25 -1.33385690206763
26 -1.35607303829665
27 -1.36158217984339
28 -0.247931359158294
29 -1.04166612849017
30 1.54677829007982
31 2.47607351934486
32 3.02842418496904
33 1.67266577755702
34 0.968242742180616
35 0.114484646621162
36 0
37 0.675637583741003
38 1.84449610654716
39 1.98865995147305
40 2.47454210270458
41 2.81856643172323
42 3.82259768732725
43 1.44636583633455
44 0.642229528609849
45 0.822623023249108
46 1.01056301980803
47 1.39607893031385
48 1.343122857941
49 0.0515540028863348
50 0
51 0.773905117239813
52 1.04687575425939
53 0.784807714602223
54 1.33850166494241
55 2.01893859324751
56 1.2482442121966
57 1.83598968473666
58 2.85438362552652
59 0.215553236898821
};
\addlegendentry{Optimal}

\addplot [
    black,
    thick,
    opacity=0.9,
    dash pattern=on 7.4pt off 3.2pt
]
table {%
30 -9.60048289696956
30 7.96212332616847
};
\addlegendentry{Mode switch}

\end{axis}

\end{tikzpicture}

%% file: plots/queue/history_variables.tex
\begin{tikzpicture}

\definecolor{darkgrey176}{RGB}{176,176,176}

\begin{axis}[
    name=main,
    width=\linewidth,
    height=0.6\linewidth,
    grid=both,
    tick align=outside,
    legend style={
        at={(0.5,1.08)},
        anchor=south,
        legend columns=4,
        draw=black,
        fill=white,
        font=\small
    },
    legend cell align={left},
    tick pos=left,
    x grid style={darkgrey176},
    xlabel={$t$},
    xmajorgrids,
    xmin=-2.95, xmax=61.95,
    xtick style={color=black},
    y grid style={darkgrey176},
    ylabel={$Z$},
    ymajorgrids,
    ymin=-29.9266756793897, ymax=1175.44405149864,
    ytick style={color=black}
]

\addplot [line width=1.12pt, blue]
table {%
0 25.9509028287027
1 40.5410414657181
2 43.3894484759639
3 54.4011826147055
4 67.5936353721556
5 77.5280086591466
6 126.371819935463
7 143.376492118107
8 173.742866403732
9 196.763093230116
10 218.048517300748
11 238.400462493164
12 260.731976745083
13 268.432065531302
14 274.868689899299
15 281.956950662793
16 300.265113110863
17 342.49007623832
18 362.271808995281
19 375.229603519314
20 393.450518138234
21 404.32981091496
22 415.904780661758
23 441.702715290159
24 456.025750651917
25 462.076412067231
26 479.073040454059
27 497.246812394066
28 516.000448304338
29 526.977315793943
30 552.141667826917
31 572.194860655529
32 588.780200487555
33 597.483145698854
34 605.464861603447
35 612.593945264508
36 622.140288172317
37 644.655271498715
38 664.877901314086
39 693.3637603992
40 732.372903812871
41 773.234502918229
42 794.240024008597
43 813.147153491256
44 831.059557545829
45 865.663319686758
46 883.853655354744
47 891.204237769334
48 907.243122915983
49 918.231591338633
50 931.423568366041
51 950.988537361302
52 957.165987432687
53 973.313250517141
54 996.875039991868
55 1006.80094484344
56 1019.57623516438
57 1029.27685005663
58 1062.8167541622
59 1075.62449131725
};
\addlegendentry{$z^{(1)}$}

\addplot [line width=1.12pt, red]
table {%
0 27.7429028287027
1 39.6728860588145
2 43.2570354912168
3 55.9255418827768
4 67.4897348313673
5 77.6808739996016
6 127.817754745803
7 149.670536442466
8 176.029409310921
9 203.820831498217
10 230.47951883664
11 254.771227619646
12 282.092527230656
13 293.988780120205
14 297.635632437547
15 306.138181556071
16 326.366988752835
17 366.195993365138
18 385.88815424116
19 397.347091952157
20 417.185660820459
21 429.137982251086
22 439.704229687469
23 465.90360144499
24 477.333139277934
25 483.967537543364
26 502.118805229036
27 518.712635773571
28 537.396733941076
29 550.136749048099
30 577.538097950837
31 601.076074497685
32 621.941903844817
33 634.353580781437
34 641.033679441991
35 649.653082562787
36 660.298455152141
37 684.673981478681
38 707.92105145829
39 738.107467834652
40 778.883097412222
41 820.222241133201
42 842.426380836281
43 862.883520394719
44 879.567401447316
45 913.060639152197
46 930.310326507434
47 936.939653475716
48 953.842833255508
49 964.825405544022
50 977.552840113707
51 996.885489202752
52 1004.17869909178
53 1020.04384257968
54 1045.36348396227
55 1053.1295481397
56 1064.54520231169
57 1074.35394524816
58 1107.85326883824
59 1120.65447299055
};
\addlegendentry{$z^{(2)}$}

\addplot [line width=1.12pt, green!60!black]
table {%
0 24.8629028287027
1 31.359191097395
2 34.7358956835221
3 44.527577658041
4 52.6475802463511
5 62.88924063322
6 113.145067518771
7 142.302563899375
8 164.522238333446
9 185.516053001509
10 202.841163500177
11 235.464175928304
12 267.885760840837
13 284.756726133026
14 310.967249883602
15 318.212267851633
16 334.823264510986
17 389.02363826011
18 408.805371017071
19 418.971469175069
20 438.819225869354
21 449.400281310277
22 461.311459607853
23 487.745745118788
24 502.965788504145
25 506.371893754498
26 522.745953426685
27 545.000316117147
28 563.496103413894
29 574.056304452103
30 597.551084455369
31 616.609431529909
32 642.221115745793
33 656.314659621324
34 672.150818333842
35 682.142576809251
36 688.398884019687
37 711.766542382113
38 722.188771353285
39 737.938401679228
40 769.972810285296
41 789.425496914254
42 810.787452478516
43 829.73302053996
44 843.863171979947
45 877.323018675434
46 888.913884494414
47 892.398891252848
48 905.937238733404
49 915.796834253045
50 928.988811280453
51 947.561545794304
52 954.576496469096
53 972.893263930097
54 996.237777105472
55 999.218533196268
56 1013.66807485171
57 1028.78123323505
58 1054.07340814292
59 1067.2192827909
};
\addlegendentry{$z^{(3)}$}

\addplot [line width=1.12pt, purple]
table {%
0 26.7029028287027
1 33.1604376243519
2 36.1530759226644
3 46.742484312252
4 55.2753607515386
5 65.426769008899
6 116.314492361516
7 143.166555201575
8 161.465521134688
9 185.003446542036
10 205.513445091114
11 237.137649720986
12 267.442545489311
13 287.87629654216
14 311.96329812664
15 321.003895175531
16 339.876836331221
17 391.038732847978
18 410.820465604939
19 422.014545540844
20 440.946879192809
21 451.080578630027
22 463.496069753771
23 489.647532650246
24 506.12288616689
25 508.421007702173
26 525.9148162391
27 548.99115898728
28 567.734877643185
29 577.6700790043
30 600.236792033518
31 617.618969852056
32 645.425358445324
33 656.7952138741
34 670.44958291775
35 679.07974567377
36 684.674389205783
37 706.904655688065
38 719.271582847967
39 737.504511137699
40 765.28259818392
41 780.578543138814
42 795.144887629877
43 807.898844657301
44 817.333026538559
45 847.274999920816
46 860.896471847182
47 863.870324327368
48 875.749763859369
49 884.968084631877
50 898.160061659285
51 917.042848999434
52 924.727800156953
53 942.641278863436
54 966.870434792011
55 970.416493688916
56 984.516526964943
57 998.802940958875
58 1025.43827229216
59 1038.47925684047
};
\addlegendentry{$z^{(4)}$}

\addplot [
    black,
    thick,
    opacity=0.9,
    dash pattern=on 7.4pt off 3.2pt
]
table {%
30 -29.9266756793897
30 1175.44405149864
};

\draw[black, dashed, thick]
    (axis cs:22,390) rectangle (axis cs:30,610);
\coordinate (beforeboxtop) at (axis cs:26,610);

\draw[black, dashed, thick]
    (axis cs:50,890) rectangle (axis cs:59,1130);
\coordinate (afterboxbottom) at (axis cs:54.5,890);

\end{axis}

\begin{axis}[
    name=beforezoom,
    at={(main.north west)},
    anchor=north west,
    xshift=0.30cm,
    yshift=-0.30cm,
    width=0.46\linewidth,
    height=0.32\linewidth,
    grid=both,
    xmin=22, xmax=30,
    ymin=390, ymax=610,
    xtick=\empty,
    ytick=\empty,
    xlabel={},
    ylabel={},
    title={},
    x grid style={darkgrey176},
    y grid style={darkgrey176},
    axis background/.style={fill=white}
]

\addplot [line width=1.12pt, blue]
table {%
22 415.904780661758
23 441.702715290159
24 456.025750651917
25 462.076412067231
26 479.073040454059
27 497.246812394066
28 516.000448304338
29 526.977315793943
30 552.141667826917
};

\addplot [line width=1.12pt, red]
table {%
22 439.704229687469
23 465.90360144499
24 477.333139277934
25 483.967537543364
26 502.118805229036
27 518.712635773571
28 537.396733941076
29 550.136749048099
30 577.538097950837
};

\addplot [line width=1.12pt, green!60!black]
table {%
22 461.311459607853
23 487.745745118788
24 502.965788504145
25 506.371893754498
26 522.745953426685
27 545.000316117147
28 563.496103413894
29 574.056304452103
30 597.551084455369
};

\addplot [line width=1.12pt, purple]
table {%
22 463.496069753771
23 489.647532650246
24 506.12288616689
25 508.421007702173
26 525.9148162391
27 548.99115898728
28 567.734877643185
29 577.6700790043
30 600.236792033518
};

\end{axis}

\begin{axis}[
    name=afterzoom,
    at={(main.south east)},
    anchor=south east,
    xshift=-0.40cm,
    yshift=0.40cm,
    width=0.46\linewidth,
    height=0.32\linewidth,
    grid=both,
    xmin=50, xmax=59,
    ymin=890, ymax=1130,
    xtick=\empty,
    ytick=\empty,
    xlabel={},
    ylabel={},
    title={},
    x grid style={darkgrey176},
    y grid style={darkgrey176},
    axis background/.style={fill=white}
]

\addplot [line width=1.12pt, blue]
table {%
50 931.423568366041
51 950.988537361302
52 957.165987432687
53 973.313250517141
54 996.875039991868
55 1006.80094484344
56 1019.57623516438
57 1029.27685005663
58 1062.8167541622
59 1075.62449131725
};

\addplot [line width=1.12pt, red]
table {%
50 977.552840113707
51 996.885489202752
52 1004.17869909178
53 1020.04384257968
54 1045.36348396227
55 1053.1295481397
56 1064.54520231169
57 1074.35394524816
58 1107.85326883824
59 1120.65447299055
};

\addplot [line width=1.12pt, green!60!black]
table {%
50 928.988811280453
51 947.561545794304
52 954.576496469096
53 972.893263930097
54 996.237777105472
55 999.218533196268
56 1013.66807485171
57 1028.78123323505
58 1054.07340814292
59 1067.2192827909
};

\addplot [line width=1.12pt, purple]
table {%
50 898.160061659285
51 917.042848999434
52 924.727800156953
53 942.641278863436
54 966.870434792011
55 970.416493688916
56 984.516526964943
57 998.802940958875
58 1025.43827229216
59 1038.47925684047
};

\end{axis}

\draw[black, thick]
    (beforeboxtop) -- (beforezoom.south);

\draw[black, thick]
    (afterboxbottom) -- (afterzoom.north);

\end{tikzpicture}

%% file: plots/queue/mode_estimate.tex
\begin{tikzpicture}

\definecolor{darkgrey176}{RGB}{176,176,176}

\begin{axis}[
    width=\linewidth,
    height=0.62\linewidth,
    grid=both,
    tick align=outside,
    legend style={
        at={(0.5,1.08)},
        anchor=south,
        legend columns=3,
        draw=black,
        fill=white,
        font=\small
    },
    legend cell align={left},
    tick pos=left,
    title={True mode vs estimated mode},
    x grid style={darkgrey176},
    xlabel={$t$},
    xmajorgrids,
    xmin=-2.95, xmax=61.95,
    xtick style={color=black},
    y grid style={darkgrey176},
    ylabel={mode},
    ymajorgrids,
    ymin=0, ymax=4.15,
    ytick={1,2,3,4},
    ytick style={color=black}
]

\addplot [
    line width=1.12pt,
    blue,
    const plot mark left
]
table {%
0 1
1 1
2 1
3 1
4 1
5 1
6 1
7 1
8 1
9 1
10 1
11 1
12 1
13 1
14 1
15 1
16 1
17 1
18 1
19 1
20 1
21 1
22 1
23 1
24 1
25 1
26 1
27 1
28 1
29 1
30 4
31 4
32 4
33 4
34 4
35 4
36 4
37 4
38 4
39 4
40 4
41 4
42 4
43 4
44 4
45 4
46 4
47 4
48 4
49 4
50 4
51 4
52 4
53 4
54 4
55 4
56 4
57 4
58 4
59 4
};
\addlegendentry{True mode}

\addplot [
    line width=1.12pt,
    red,
    const plot mark left,
    mark=*,
    mark size=1.6pt,
    mark options={solid, fill=red, draw=red}
]
table {%
0 3
1 3
2 3
3 3
4 3
5 3
6 3
7 3
8 4
9 4
10 3
11 3
12 1
13 1
14 1
15 1
16 1
17 1
18 1
19 1
20 1
21 1
22 1
23 1
24 1
25 1
26 1
27 1
28 1
29 1
30 1
31 1
32 1
33 1
34 1
35 1
36 1
37 1
38 1
39 1
40 1
41 1
42 1
43 4
44 4
45 4
46 4
47 4
48 4
49 4
50 4
51 4
52 4
53 4
54 4
55 4
56 4
57 4
58 4
59 4
};
\addlegendentry{Estimated mode}

\addplot [
    black,
    thick,
    opacity=0.9,
    dash pattern=on 7.4pt off 3.2pt
]
table {%
30 0
30 4.15
};
\addlegendentry{Mode switch}

\end{axis}

\end{tikzpicture}

%% file: tex_input/conclusions.tex
\section{Conclusion and future directions}\label{conclus}
This paper proposes a minimax adaptive control framework for positive linear systems. The problem is formulated as a minimax dynamic game between the controller and an adversary that selects both the disturbance sequence and the true dynamics. This formulation naturally compels the controller to explore and learn the true dynamics in order to stabilize the system, thereby giving rise to the explore--exploit tradeoff commonly discussed in dual control and reinforcement learning.

We showed how past data can be compressed into history variables, on which the
stabilizing adaptive control policy depends. These history variables contain
the information needed for the controller to balance learning and control. By
reformulating the problem in terms of the history variables, we showed that the Bellman inequality
can be solved when the adversary is free to choose from a finite set of possible positive 
LTI plants.

While the present work takes a step in this direction, a complete theory of
dual adaptive control for positive systems remains an open problem. Several important avenues for future research remain open:
\begin{enumerate}
	\item[\emph{(1)}] One natural extension is to allow uncertainty not only in the dynamics but also in the stage cost. In that setting, the unknown model is described by a quadruple \((A,B,s,r)\in\mathcal M\times\mathcal S\), where \(\mathcal M\) denotes the set of admissible dynamical models and \(\mathcal S\) the set of admissible cost parameters. The controller must then learn both the true plant dynamics and the cost online in order to stabilize the system.
	
	\item[\emph{(2)}] Throughout the manuscript, we relied on model dependent history variables \(z^{(A,B)}\) for each \((A,B)\in\mathcal M\). This parameterization becomes prohibitive when one moves from a finite model set \(\mathcal M\) to a continuum of models. In that case, one must instead seek history variables whose definition is independent of \((A,B)\), in the same spirit as the covariance-matrix parameterization in~\cite{rantzer2021minimax}. A natural direction for future work is therefore to extend the framework to richer model classes, including continua of dynamical models. 
	
	\item[\emph{(3)}] Another important direction for future work is to solve the exact Bellman equation and thereby characterize the truly optimal dual controller, including its optimal exploration strategy.

	\item[\emph{(4)}] Another possible follow-up work would be to devise better approaches for solving~\eqref{ineq-cond-jk}, which is needed to synthesize the adaptive policy. This should lead to tighter certified cost and gain bounds, similar to the work in~\cite{cederberg2022synthesis}, which addressed the quadratic setting instead.
	
	\item[\emph{(5)}] Another interesting direction is to pursue application driven
	extensions supported by the positive system classes considered here, and to
	assess the practical implications of the proposed dual controllers in
	representative case studies. For instance, one may develop a minimax adaptive
	dual control framework for stochastic shortest-path problems modeled by the
	class in~\cite{ohlin2024heuristic}; see Section~\ref{sec31}. Another promising
	direction is the study of compartmental models, as in~\cite{blanchini2023optimal}.
\end{enumerate}

%% file: tex_input/acknowledgments.tex
\section*{Acknowledgments}
The authors would like to thank Tomas Meijer for his constructive feedback, which led to a significant improvement in the clarity and exposition of the paper.
The authors are affiliated with the ELLIIT Strategic Research Area at Lund University. This project received funding from the European Research Council (ERC) under grant agreement No.~834142 (ScalableControl), and was partially supported by the Wallenberg AI, Autonomous Systems and Software Program (WASP), funded by the Knut and Alice Wallenberg Foundation.

%% file: tex_input/append_prelim.tex
\section{Auxiliary Lemmata and their proofs}\label{appenA}
\begin{lemma}\label{lem:sup-ptwise}
Let $c,c_1,c_2$ be nonnegative real numbers. Then, the following holds:
\begin{enumerate}
 \item[(L1)] 
If $\lambda,\gamma\ge0$, then
\[
\max_{v\ge0}\,\bigl(\lambda v-\gamma|c-v|\bigr)=
\begin{cases}
\lambda c, & \gamma\ge \lambda,\\[2pt]
+\infty, & \gamma<\lambda.
\end{cases}
\tag{L1}
\]

 \item[(L2)] 
If $c_1,c_2\ge0$, $\alpha>0$, and $\lambda\ge0$, then
\[
\max_{v\ge0}\,\bigl(\lambda v-\alpha(|c_1-v|+|c_2-v|)\bigr)
=
\begin{cases}
\displaystyle \lambda\max\{c_1,c_2\}
\;-\;\alpha|c_1-c_2|, & 0\le \lambda \le 2\alpha,\\[6pt]
\displaystyle +\infty, & \lambda>2\alpha.
\end{cases}
\tag{L2}
\]
\end{enumerate}
\end{lemma}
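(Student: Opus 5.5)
Both parts reduce to maximizing a concave, piecewise-linear function of one variable over $[0,\infty)$. The maximum is either attained at a breakpoint or is $+\infty$ when the slope on the unbounded piece is positive. The plan is to write down the slopes on each linear piece, read off the answer, and treat the degenerate sign cases separately.

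\emph{(L1).} Set $f(v)=\lambda v-\gamma|c-v|$, which is concave in $v$. On $[0,c]$ the slope is $\lambda+\gamma\ge 0$, so $f$ is nondecreasing there and its maximum over that interval is $f(c)=\lambda c$. On $[c,\infty)$ the slope is $\lambda-\gamma$, which splits into two cases.
\begin{itemize}
\item[$\cdot$] If $\gamma\ge\lambda$, the slope is nonpositive, so $\max_{v\ge0}f=f(c)=\lambda c$.
\item[$\cdot$] If $\gamma<\lambda$, then $f(v)=(\lambda-\gamma)v+\gamma c\to+\infty$ as $v\to\infty$, so the supremum is $+\infty$.
\end{itemize}
The equality case $\lambda=\gamma$ is included in the first case, with the maximum attained on the whole ray $[c,\infty)$.

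\emph{(L2).} Assume without loss of generality that $c_1\le c_2$, since the expression is symmetric in $(c_1,c_2)$. Set $g(v)=\lambda v-\alpha(|c_1-v|+|c_2-v|)$, again concave and piecewise linear. Its slopes are:
\begin{itemize}
\item[$\cdot$] $\lambda+2\alpha>0$ on $[0,c_1]$;
\item[$\cdot$] $\lambda\ge0$ on $[c_1,c_2]$;
\item[$\cdot$] $\lambda-2\alpha$ on $[c_2,\infty)$.
\end{itemize}
So $g$ is nondecreasing up to $c_2$. If $\lambda\le2\alpha$, the maximum is $g(c_2)=\lambda c_2-\alpha(c_2-c_1)$, which equals $\lambda\max\{c_1,c_2\}-\alpha|c_1-c_2|$. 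If $\lambda>2\alpha$, then $g(v)\to+\infty$ along the last ray. Nonnegativity of $c_1,c_2$ guarantees that the breakpoints lie in the feasible set $v\ge0$, which is why no boundary term at $v=0$ appears.

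There is no real obstacle here. The only points needing care are:
\begin{itemize}
\item[$\cdot$] confirming that the breakpoints $c$, $c_1$, $c_2$ lie in $[0,\infty)$, which is where the hypotheses $c,c_1,c_2\ge0$ are used;
\item[$\cdot$] handling the boundary cases $\lambda=\gamma$ and $\lambda=2\alpha$, where the maximizer is non-unique but the maximum value is unchanged.
\end{itemize}
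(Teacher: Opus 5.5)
Your proposal is correct and follows essentially the same route as the paper's proof: a piecewise-linear slope analysis on $[0,c]$ and $[c,\infty)$ for (L1), and on the three regions determined by $c_1\le c_2$ (assumed without loss of generality) for (L2). In each case the maximum is read off at the breakpoint, or the supremum is $+\infty$ when the slope on the unbounded ray is positive. Your explicit use of concavity is just a compact way of packaging the same case split.
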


\begin{proof}[Proof of (L1).]
Consider the scalar problem
\[
\max_{v\ge0}\,\bigl(\lambda v - \gamma|c-v|\bigr).
\]
If $v\le c$, then $|c-v|=c-v$ and
\[
\lambda v - \gamma|c-v|
= \lambda v - \gamma(c-v)
= (\lambda+\gamma)v - \gamma c,
\]
which is affine in $v$ with slope $\lambda+\gamma\ge0$, so on $[0,c]$ it is
maximized at $v=c$. If $v\ge c$, then $|c-v|=v-c$ and
\[
\lambda v - \gamma|c-v|
= \lambda v - \gamma(v-c)
= (\lambda-\gamma)v + \gamma c.
\]
We therefore distinguish three cases depending on $(\gamma,\,\lambda)$
\begin{itemize}
\item[\textit{(1)}] If $\gamma>\lambda$, then $\lambda-\gamma<0$, so on $[c,\infty)$ the
      expression is decreasing in $v$ and is again maximized at $v=c$.
\item[\textit{(2)}] If $\gamma=\lambda$, then the expression is constant on $[c,\infty)$,
      equal to $\lambda c$.
\item[\textit{(3)}] If $\gamma<\lambda$, then $\lambda-\gamma>0$, so on $[c,\infty)$ the
      expression grows linearly with $v$ and the supremum is $+\infty$.
\end{itemize}
Hence
\[
\max_{v\ge0}\,\bigl(\lambda v - \gamma|c-v|\bigr)
=
\begin{cases}
\lambda c, & \gamma\ge \lambda,\\[2pt]
+\infty, & \gamma<\lambda.
\end{cases}
\]
This proves \textup{(L1)}.

\medskip
\noindent\emph{Proof of \textup{(L2)}.}
Without loss of generality, assume $c_1\le c_2$; the case $c_2<c_1$ is
symmetric. Consider
\[
\max_{v\ge0}\,\Bigl(\lambda v-\alpha\bigl(|c_1-v|+|c_2-v|\bigr)\Bigr).
\]

We analyze three regions.
\begin{itemize}
    \item[\textit{(1)}] \emph{Region 1: $0\le v\le c_1$.}
Then $|c_1-v|=c_1-v$ and $|c_2-v|=c_2-v$, so
\[
\lambda v-\alpha(|c_1-v|+|c_2-v|)
= \lambda v-\alpha(c_1+c_2-2v)
= (\lambda+2\alpha)v-\alpha(c_1+c_2).
\]
This is affine in $v$ with slope $\lambda+2\alpha>0$, so on $[0,c_1]$ it is
maximized at $v=c_1$.
       \item[\textit{(2)}] \emph{Region 2: $c_1\le v\le c_2$.}
Then $|c_1-v|=v-c_1$ and $|c_2-v|=c_2-v$, so
\[
\lambda v-\alpha(|c_1-v|+|c_2-v|)
= \lambda v-\alpha(c_2-c_1).
\]
This is affine in $v$ with slope $\lambda\ge0$, so on $[c_1,c_2]$ it is
maximized at $v=c_2$.
          \item[\textit{(3)}] \emph{Region 3: $v\ge c_2$.}
Then $|c_1-v|=v-c_1$ and $|c_2-v|=v-c_2$, so
\[
\lambda v-\alpha(|c_1-v|+|c_2-v|)
= \lambda v-\alpha(2v-(c_1+c_2))
= (\lambda-2\alpha)v+\alpha(c_1+c_2).
\]
If $\lambda<2\alpha$, this is decreasing on $[c_2,\infty)$, so it is maximized
there at $v=c_2$. If $\lambda=2\alpha$, it is constant on $[c_2,\infty)$.
If $\lambda>2\alpha$, it is increasing on $[c_2,\infty)$, so the supremum is
$+\infty$.
\end{itemize}
Therefore two cases arise:
\begin{itemize}
\item[\textit{(1)}] If $0\le\lambda\le 2\alpha$, the maximum is attained at $v=c_2$, and since
      $c_2=\max\{c_1,c_2\}$ and $c_2-c_1=|c_1-c_2|$,
      \[
      \max_{v\ge0}\,\bigl(\lambda v-\alpha(|c_1-v|+|c_2-v|)\bigr)
      = \lambda c_2-\alpha(c_2-c_1)
      = \lambda\max\{c_1,c_2\}-\alpha|c_1-c_2|.
      \]
\item[\textit{(2)}] If $\lambda>2\alpha$, the maximum is $+\infty$.
\end{itemize}
Hence
\[
\max_{v\ge0}\,\bigl(\lambda v-\alpha(|c_1-v|+|c_2-v|)\bigr)
=
\begin{cases}
\displaystyle \lambda\max\{c_1,c_2\}
\;-\;\alpha|c_1-c_2|, & 0\le \lambda \le 2\alpha,\\[6pt]
\displaystyle +\infty, & \lambda>2\alpha.
\end{cases}
\]
This proves \textup{(L2)} and completes the proof of the lemma.
\end{proof}
\begin{lemma}[Vector version of Lemma~\ref{lem:sup-ptwise}]\label{lem:sup-ptwise-vec}
Let $c,c_1,c_2$ be vectors in $\mathbb R^m_+$. The maximum is taken over all
nonnegative vectors $v\in\mathbb R^m_+$. The following holds.
\begin{enumerate}
\item[(VL1)] 
Let $\lambda,\gamma\in\mathbb R^m_+$. Then,
\[
\max_{v\ge 0}\ \Big(\lambda^\top v-\gamma^\top|c-v|\Big)
=
\begin{cases}
\lambda^\top c, & \gamma\ge \lambda,\\[2pt]
+\infty, & \text{otherwise}.
\end{cases}
\tag{VL1}
\]
\item[(VL2)] 
Let $c_1,c_2\in\mathbb R^m_+$, $\alpha\in\mathbb R^m_{+}$, and
$\lambda\in\mathbb R^m_+$. Then
\begin{equation}\tag{VL2}\label{eq:VL2-ptwise}
\begin{aligned}
\max_{v\ge 0}\ \Big(\lambda^\top v
-\alpha^\top\big(|c_1-v|+|c_2-v|\big)\Big)
&=
\begin{cases}
\lambda^\top \max\{c_1,c_2\}
-\alpha^\top |c_1-c_2|,
& 0\le \lambda \le 2\alpha,\\[4pt]
+\infty, & \text{otherwise}.
\end{cases}
\end{aligned}
\end{equation}
\end{enumerate}
\end{lemma}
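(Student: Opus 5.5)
The plan is to reduce both statements to their scalar counterparts in Lemma~\ref{lem:sup-ptwise} by separability. The objective in (VL1) is
\[
\lambda^\top v-\gamma^\top|c-v|=\sum_{i=1}^m\bigl(\lambda_i v_i-\gamma_i|c_i-v_i|\bigr),
\]
and the one in (VL2) is
\[
\sum_{i=1}^m\Bigl(\lambda_i v_i-\alpha_i\bigl(|c_{1,i}-v_i|+|c_{2,i}-v_i|\bigr)\Bigr).
\]
Each is a sum of terms depending only on the single coordinate $v_i$. The constraint set $v\ge0$ is the product $\prod_i[0,\infty)$. Hence the supremum over $v$ equals the sum over $i$ of the scalar suprema over $v_i\ge0$, with the convention that the sum is $+\infty$ as soon as one summand is $+\infty$. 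This is legitimate because every scalar supremum is $>-\infty$; for instance, $v_i=c_i$ gives a finite value.

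\textbf{Step 1: (VL1).} Apply (L1) coordinatewise. If $\gamma_i\ge\lambda_i$ for every $i$, i.e. $\gamma\ge\lambda$ elementwise, each coordinate contributes $\lambda_i c_i$. The total is then $\lambda^\top c$, attained at $v=c$. Otherwise some index has $\gamma_i<\lambda_i$, so that coordinate's supremum is $+\infty$ and the total is $+\infty$.

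\textbf{Step 2: (VL2).} Apply (L2) coordinatewise. One care point is that (L2) assumes $\alpha>0$, while here $\alpha_i\ge0$. For $\alpha_i>0$, (L2) gives $\lambda_i\max\{c_{1,i},c_{2,i}\}-\alpha_i|c_{1,i}-c_{2,i}|$ when $\lambda_i\le2\alpha_i$, and $+\infty$ otherwise. For $\alpha_i=0$ the term is simply $\lambda_i v_i$. If $\lambda_i=0$, which is forced by $0\le\lambda_i\le2\alpha_i$, it equals $0$, matching the formula. If $\lambda_i>0$ it is $+\infty$. Alternatively, one can note that the three-region argument in the proof of (L2) never used strict positivity of $\alpha$, except for the slope $\lambda+2\alpha$ in region~1, which is still $\ge0$.

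Summing gives $\lambda^\top\max\{c_1,c_2\}-\alpha^\top|c_1-c_2|$, with the max taken elementwise, exactly when $0\le\lambda\le2\alpha$ elementwise, and $+\infty$ otherwise. The maximizer is $v=\max\{c_1,c_2\}$, so the supremum is attained and "max" is justified.

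The only mild obstacle is bookkeeping. One must handle the degenerate $\alpha_i=0$ coordinates and justify exchanging the maximum with the finite sum, including the $+\infty$ case. Both points are settled by the product structure of the feasible set and by finiteness from below.
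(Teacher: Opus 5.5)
Your proposal is correct and follows the paper's own proof. Both reduce (VL1) and (VL2) to the scalar Lemma~\ref{lem:sup-ptwise} by coordinatewise separability over the product set $v\ge 0$; your separate treatment of coordinates with $\alpha_i=0$ (which (L2) as stated excludes) and of exchanging the supremum with the finite sum in the $+\infty$ case fills small gaps the paper leaves implicit.
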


\medskip
\begin{proof}
The objectives separate across coordinates. Indeed,
\[
\lambda^\top v-\gamma^\top|c-v|
=
\sum_{i=1}^m\bigl(\lambda_i v_i-\gamma_i|c_i-v_i|\bigr),
\]
and
\[
\lambda^\top v-\alpha^\top\bigl(|c_1-v|+|c_2-v|\bigr)
=
\sum_{i=1}^m\bigl(\lambda_i v_i-\alpha_i(|c_{1,i}-v_i|+|c_{2,i}-v_i|)\bigr).
\]
Therefore the maximization over $v\in\mathbb R^m_+$ reduces to independent
scalar maximization problems in the coordinates $v_1,\dots,v_m$. Applying
Lemma~\ref{lem:sup-ptwise} coordinatewise yields the stated formulas. The
finiteness conditions are also coordinatewise, namely $\gamma_i\ge\lambda_i$
for all $i$ in \textup{(VL1)} and $0\le\lambda_i\le2\alpha_i$ for all $i$ in
\textup{(VL2)}, which is exactly $\gamma\ge\lambda$ and
$0\le\lambda\le2\alpha$ componentwise.
\end{proof}

\subsection{Minimax dynamic programming}\label{standarddp}
We define a general discrete-time, infinite-horizon, minimax optimal control
problem with continuous cost function and constraints as
\begin{equation}\label{eq:minimax-problem}
\begin{aligned}
\inf_{\mu}\;
\max_{w}
&\ \sum_{t=0}^{\infty} g(x(t),u(t),w(t)) \\[1mm]
\text{s.t.}\quad
& x(t+1)=f(x(t),u(t),w(t)),\quad x(0)=x_0,\\
& x(t)\in \mathcal{X};\ x(0)=x_0;\ u(t)=\mu(x(t)),\\
& u(t)\in \mathcal{U}(x(t));\; w(t)\in \mathcal{W}(x(t)),
\end{aligned}
\end{equation}
where \(f:\mathcal{X}\times\mathcal{U}\times\mathcal{W}\to\mathcal{X}\) represents the vector of
\(n\)-dimensional state variables, \(u\) the \(m\)-dimensional control
variable and \(w\) the \(q\)-dimensional disturbance.

\begin{lemma}\label{lemma}
    Suppose
\[
\max_{w\in \mathcal{W}(x)} g(x,u,w)\geq 0,\qquad \text{for all}\ x\in \mathcal{X},\ \text{for all}\ u\in \mathcal{U}(x).
\]
Then, the following statements are equivalent.

\begin{enumerate}
\item[\textit{(i)}] The general optimal control problem in~\eqref{eq:minimax-problem} has a finite value
for every \(x_0\in \mathbb{R}_+^n\).

\item[\textit{(ii)}] The recursive sequence \(\{J_k\}_{k=0}^{\infty}\) with \(J_0=0\) and
\begin{align}\label{jk}
    J_k(x)=\min_{u}\max_{w}\bigl[g(x,u,w)+J_{k-1}(f(x,u,w))\bigr].
\end{align}
has a finite limit for all \( x\in \mathcal{X}\).
\item[\textit{(iii)}] The Bellman equation
\begin{align}\label{fixed_point}
    J^*(x)=\min_{u}\max_{w}\bigl[g(x,u,w)+J^*(f(x,u,w))\bigr]
\end{align}
has nonnegative solution \(J^*(x)\), for all \( x\in \mathcal{X}\).
\end{enumerate}
\end{lemma}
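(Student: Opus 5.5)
Under the standing hypothesis $\max_{w\in\mathcal W(x)} g(x,u,w)\ge 0$, the plan is to prove the cycle $(i)\Rightarrow(ii)\Rightarrow(iii)\Rightarrow(i)$. The whole argument rests on one observation. Because the worst-case stage cost is nonnegative, the value iterates form a nondecreasing sequence: $J_1(x)=\min_u\max_w g(x,u,w)\ge 0=J_0(x)$. The operator $T J(x)\coloneqq \min_u\max_w[g+J(f)]$ is monotone, so induction gives $J_{k+1}=TJ_k\ge TJ_{k-1}=J_k$. Hence $J_k(x)\uparrow J_\infty(x)\in[0,+\infty]$ pointwise, and the only question is whether this limit is finite.

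\emph{(i)$\Rightarrow$(ii).} Let $J_\star(x_0)$ denote the finite optimal value of~\eqref{eq:minimax-problem}. I will show $J_k(x_0)\le J_\star(x_0)$ for every $k$. By minimax dynamic programming on a finite horizon, $J_k(x_0)$ is the value of the $k$-step truncated game with zero terminal cost. For any causal policy $\mu$, the adversary in the infinite-horizon game can play its $k$-step best response and, from step $k$ onward, choose $w$ attaining $\max_w g\ge0$ at every later step. This can only add nonnegative cost, so the infinite-horizon cost of $\mu$ is at least the $k$-step cost. Taking the infimum over $\mu$ gives $J_k\le J_\star<\infty$. Since the sequence is monotone and bounded, the limit exists and is finite.

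\emph{(ii)$\Rightarrow$(iii).} Set $J^*=J_\infty\ge0$. It remains to show $TJ^*=J^*$. One inequality is immediate: $J_k\le J^*$ gives $J_{k+1}=TJ_k\le TJ^*$, so $J^*\le TJ^*$. The reverse inequality requires passing the limit through the $\min_u\max_w$, and this is where I expect the main obstacle. The $\max_w$ commutes with monotone increasing limits for free, because a supremum of a nondecreasing sequence can be taken in either order. The $\min_u$ does not. I would use the continuity assumptions on $g,f$ and assume compactness of $\mathcal U(x)$, as in the standard setting. Let $u_k$ be the minimizer in $TJ_k(x)$ and pass to a convergent subsequence $u_k\to\bar u$. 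Then for fixed $j\le k$,
\[
\max_w\bigl[g(x,u_k,w)+J_j(f(x,u_k,w))\bigr]\le J_{k+1}(x)\le J^*(x).
\]
Letting $k\to\infty$ and using lower semicontinuity in $u$, which holds because a max of continuous functions is lsc, yields $\max_w[g+J_j(f)](x,\bar u)\le J^*(x)$. Then letting $j\to\infty$ via monotone convergence gives $TJ^*(x)\le J^*(x)$. This closes the fixed-point argument, with the compactness of $\mathcal U(x)$ being the step I would need to state as an assumption.

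\emph{(iii)$\Rightarrow$(i).} Let $J^*\ge0$ solve the Bellman equation, and let $\mu^*$ be the minimizing selector. For any adversary sequence, the Bellman equation gives $g(x_t,\mu^*(x_t),w_t)\le J^*(x_t)-J^*(x_{t+1})$. Telescoping and using $J^*\ge0$,
\[
\sum_{t=0}^{T} g\le J^*(x_0)-J^*(x_{T+1})\le J^*(x_0).
\]
The value is therefore at most $J^*(x_0)<\infty$. It is also at least $0$, since the adversary can always secure a nonnegative stage cost. This proves finiteness.

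Comparing with (i)$\Rightarrow$(ii) also shows that the limit of the iterates $J_k$ is the value and is the minimal nonnegative solution of the Bellman equation.
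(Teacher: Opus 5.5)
Your cycle (i)$\Rightarrow$(ii)$\Rightarrow$(iii)$\Rightarrow$(i) is the standard argument for results of this type. The paper gives no proof of its own; it only refers to the appendix of \cite{gurpegui2023minimax}, so there is no competing route to compare with. Three of your ingredients are correct: the monotonicity of $J_k$, the bound $J_k\le J_\star$ obtained from the adversary's greedy $k$-step response followed by nonnegative stage costs, and the telescoping verification with the Bellman minimizer. Together they give your closing claim that $\lim_k J_k$ is the value and the minimal nonnegative solution.

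The step that needs more than you wrote is the lower semicontinuity in (ii)$\Rightarrow$(iii). You justify lsc of $u\mapsto\max_{w}\bigl[g(x,u,w)+J_j(f(x,u,w))\bigr]$ by ``a max of continuous functions is lsc.'' But each map $u\mapsto g(x,u,w)+J_j(f(x,u,w))$ is lsc only if the iterate $J_j$ is lsc in the state. Continuity of $g$ and $f$ does not give that by itself: $J_j$ comes out of $\min_{u\in\mathcal U(y)}\max_{w\in\mathcal W(y)}$, so its regularity depends on how the constraint sets vary with $y$. You need a separate induction on $j$:
\begin{itemize}
\item If $J_j$ is lsc and $\mathcal W(\cdot)$ is lower hemicontinuous, then $(y,u)\mapsto\sup_{w\in\mathcal W(y)}\bigl[g(y,u,w)+J_j(f(y,u,w))\bigr]$ is jointly lsc.
\item If in addition $\mathcal U(\cdot)$ is compact-valued and upper hemicontinuous, the minimum of this function over $\mathcal U(y)$ is lsc in $y$, so $J_{j+1}$ is lsc.
\end{itemize}
In the paper's application the iterates are linear, $J_j(x)=p_j^\top x$, so this is automatic there.

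You are right that compactness of $\mathcal U(x)$ must be assumed, and this is a gap in the statement rather than in your proof: without it, (ii) need not imply (i) or (iii). For instance, let the controller at a state $a$ choose $u\in\mathbb N$ and the adversary choose $w\in\mathbb N$. The state then runs through a zero-cost chain of length $u$, incurs a one-time cost $w$, and is absorbed at zero cost. Taking $u\ge k$ gives $J_k(a)=0$ for every $k$, and all limits are finite. Yet the value at $a$ is $+\infty$, and the Bellman equation has no real-valued solution.
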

\begin{proof}
    The proof is presented in~\cite[Appendix]{gurpegui2023minimax}. 
\end{proof}

%% file: tex_input/append_main.tex
\section{Model-based minimax dynamic games for positive linear systems}\label{model-based}
Understanding the model-based solution is an important step toward the model-free setting. 
Accordingly, this section shows that the model-based solution of Problem~\ref{prob:l1-robust-control} coincides with the solution derived in~\cite{gurpegui2025minimax}, even when disturbances are allowed to be negative, provided that state positivity is preserved.
When the system matrices \((A,B)\) are known, Problem~\ref{prob:l1-robust-control} reduces to an \(\mathcal{H}_\infty\)-type optimal control problem for positive systems. Equivalently, it can be formulated as a zero-sum dynamic game in which the controller is the minimizing player and the disturbance is the maximizing player~\cite{bacsar2008h}. We establish this equivalence in the next section.
\subsection{Model-based solution via dynamic programming}\label{soldp}
When the parameters \((A,B)\) are known and under Assumption~\ref{ass:input-set}, Problem~\ref{prob:l1-robust-control} reduces to
\begin{equation}\label{eq:minimax-problem}
\begin{aligned}
\inf_{\mu}\;
\sup_{ w \,\ge\, -(Ax+Bu)}
&\ \sum_{t=0}^\infty \bigl( s^\top x_t + r^\top u_t - \gamma^\top |w_t| \bigr) \\[1mm]
\text{s.t.}\quad
& x_{t+1} = A x_t + B u_t + w_t,\quad x_0 \in \mathbb{R}_+^n,\\
& u_t=\mu_t(x_t),\\
& \abs{u_t}\leq Ex_t.
\end{aligned}
\end{equation}
The following theorem characterizes the solution to~\ref{eq:minimax-problem}.
\begin{theorem}[Solution to problem~\ref{eq:minimax-problem}]\label{thm:model-based-dp}
Let \(A\in\mathbb{R}_+^{n\times n}\), \(B=[B_1\ \cdots\ B_m]\in\mathbb{R}^{n\times m}\),
\(E=[E_1^\top\ \cdots\ E_m^\top]^\top\in\mathbb{R}_+^{m\times n}\),
\(s\in\mathbb{R}_+^n\), \(r\in\mathbb{R}^m\), and \(\gamma\in\mathbb{R}_+^n\). Suppose that
$A\ge |B|E$ and 
   $s\ge E^\top |r|$. 
Then, the following statements are equivalent:
\begin{enumerate}
\item[\textit{(i)}] Problem~\eqref{eq:minimax-problem} has finite value for every \(x_0\in\mathbb{R}^n_+\).

\item[\textit{(ii)}] The recursive sequence \(\{p_k\}_{k=0}^{\infty}\), with \(p_0=0\) and
\[
    p_k
    =
    s+A^\top p_{k-1}
    -E^\top\bigl|r+B^\top p_{k-1}\bigr|,
    \qquad k\ge 1,
\]
has a finite limit \(p\in\mathbb{R}^n_+\) satisfying \(p \leq \gamma \).

\item[\textit{(iii)}] There exists \(p\in\mathbb{R}^n_+\) such that
\begin{align*}
        p
    =
    s+A^\top p
    -E^\top\bigl|r+B^\top p\bigr|,
    \qquad
    \gamma\ge p.
\end{align*}
\end{enumerate}
If these conditions hold, then problem~\eqref{eq:minimax-problem} has the optimal value \(p^\top x_0\), where \(p\) is the limit of the sequence in \textit{(ii)} that solves the algebraic equation in \textit{(iii)}. Moreover, an optimal control law is \(u_t=-Kx_t\), where
\[
K=
\begin{bmatrix}
\operatorname{sign}(r_1+p^\top B_1)E_1\\
\vdots\\
\operatorname{sign}(r_m+p^\top B_m)E_m
\end{bmatrix}.
\]
\end{theorem}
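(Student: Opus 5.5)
The approach is to treat the model-based problem~\eqref{eq:minimax-problem} as an instance of the general minimax problem and invoke Lemma~\ref{lemma} with $g(x,u,w)=s^\top x+r^\top u-\gamma^\top|w|$ and $f(x,u,w)=Ax+Bu+w$. The admissible sets are $\mathcal U(x)=\{|u|\le Ex\}$ and $\mathcal W(x,u)=\{w\ge -(Ax+Bu)\}$, with $\mathcal X=\mathbb R^n_+$. The hypothesis $\max_w g\ge 0$ of that lemma is exactly what was verified after Assumption~\ref{ass:input-set}: taking $w=0$ gives $s^\top x+r^\top u\ge (s-E^\top|r|)^\top x\ge0$. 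Hence (i) is equivalent to convergence of value iteration $J_k$ from $J_0=0$, and to existence of a nonnegative Bellman solution. What remains is to show that the iterates stay linear, $J_k(x)=p_k^\top x$, with $p_k$ given by the recursion in (ii) as long as $p_{k-1}\le\gamma$, and that $J_k\equiv+\infty$ otherwise.

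The inductive step proceeds in three moves. First, assume $J_{k-1}(x)=p_{k-1}^\top x$ with $p_{k-1}\ge0$. Reparametrize by $v=Ax+Bu+w\ge0$, as in Section~\ref{reform}, so that the inner maximization becomes
\[
s^\top x+r^\top u+\max_{v\ge0}\bigl(p_{k-1}^\top v-\gamma^\top|v-(Ax+Bu)|\bigr).
\]
Second, since $c=Ax+Bu\ge0$ by $A\ge|B|E$, Lemma~\ref{lem:sup-ptwise-vec}(VL1) gives the value $p_{k-1}^\top(Ax+Bu)$ when $\gamma\ge p_{k-1}$, and $+\infty$ otherwise. Third, minimize the resulting expression $s^\top x+p_{k-1}^\top Ax+(r+B^\top p_{k-1})^\top u$ over $|u|\le Ex$. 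This is separable across input coordinates, so the minimizer is $u_i=-\operatorname{sign}(r_i+p_{k-1}^\top B_i)E_ix$, which yields
\[
p_k=s+A^\top p_{k-1}-E^\top|r+B^\top p_{k-1}|.
\]
Nonnegativity of $p_k$ follows from $s\ge E^\top|r|$ and $A\ge|B|E$, since $E^\top|r+B^\top p|\le E^\top|r|+E^\top|B|^\top p$.

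Monotonicity of $p_k$ comes from two facts. The map $p\mapsto s+A^\top p-E^\top|r+B^\top p|=\min_{K\in\mathcal K(E)}\bigl(s+K^\top r+(A+BK)^\top p\bigr)$ is a minimum of maps that are monotone in $p$, because $A+BK\ge0$. Also $p_1=s-E^\top|r|\ge0=p_0$. Hence $p_k$ is nondecreasing. Now prove the equivalences. For (i)$\Rightarrow$(ii), a finite value forces $J_k$ finite for all $k$, so $p_k\le\gamma$ for all $k$; a monotone bounded sequence converges to some $p\le\gamma$. For (ii)$\Rightarrow$(iii), pass to the limit in the recursion, using continuity. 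For (iii)$\Rightarrow$(i), if $p=\Phi(p)$ with $p\le\gamma$ and $p\ge0$, then $p_0=0\le p$ and monotonicity of $\Phi$ give $p_k\le p$ by induction. So $J_k$ is bounded, and Lemma~\ref{lemma}(ii)$\Rightarrow$(i) applies.

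The optimal value is then $p^\top x_0$, with $p$ the limit, which is the minimal fixed point. The optimal law is the minimizer obtained in the inductive step evaluated at $p$, which is the stated $K$. The main obstacle is the value identification: Lemma~\ref{lemma} as stated gives equivalence of finiteness, but equating the game value with $\lim J_k$ rather than with an arbitrary fixed point needs care. I would handle it by noting that $J_k$ is the $k$-horizon value, which is a lower bound on the infinite-horizon value. The matching upper bound comes from the Bellman inequality for $p^\top x$ under the policy $-Kx$. Telescoping gives $\sum_{t\le T}(s^\top x_t+r^\top u_t-\gamma^\top|w_t|)\le p^\top x_0-p^\top x_{T+1}\le p^\top x_0$.
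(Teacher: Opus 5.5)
Your proposal is correct and follows the same route as the paper's proof sketch. You instantiate Lemma~\ref{lemma} with $f(x,u,w)=Ax+Bu+w$ and $g(x,u,w)=s^\top x+r^\top u-\gamma^\top|w|$, show inductively via (VL1), after the substitution $v=Ax+Bu+w$, that $J_k(x)=p_k^\top x$ precisely when $p_{k-1}\le\gamma$ (and $J_k\equiv+\infty$ otherwise), and read off the sign-saturated minimizer $u_i=-\operatorname{sign}(r_i+p^\top B_i)E_ix$. Your explicit monotonicity argument for $p_k$, which writes the update as a componentwise minimum over $\mathcal K(E)$ of monotone affine maps, is correct, as is the telescoping argument identifying the value with the limit $p$ (the minimal fixed point); these fill in details the paper's sketch leaves to the cited literature.
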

For the sake of brevity, we omit the detailed proof, as it can be readily adapted from~\cite{gurpegui2025minimax}. We provide only a brief proof sketch below.
\begin{proofsketch}
The proof is based on Lemma~\ref{lemma}. First, we reduce the
general problem setup in~Lemma~\ref{lemma} to our setting by letting
\begin{align*}
    f(x,u,w) &\coloneqq Ax+Bu+w,\\
    g(x,u,w) &\coloneqq s^\top x+r^\top u-\gamma^\top |w|.
\end{align*}
The assumptions \(s\geq E^\top |r|\) and \(A\geq |B|E\), together with the
constraint \(|u|\leq Ex\), guarantee that
\[
\sup_{w\geq -(Ax+Bu)} g(x,u,w) \geq 0 .
\]

Next, we use induction on \(J_k(x)\coloneqq p_k^\top x\) and
\(J^*(x)\coloneqq p^\top x\) for all \(x\), to prove the equivalence between
the recursive sequence \(\{p_k\}_{k=0}^{\infty}\) and the
recursion~\ref{jk} in Lemma~\ref{lemma} and between the Bellman equation for Problem~(B.1)
and the fixed point equation~\ref{fixed_point} in Lemma~\ref{lemma}. We use these equivalences to
deduce the bound for the disturbance penalty and prove the implication
``\(\Rightarrow\)''. The implication ``\(\Leftarrow\)'' is direct by applying
the disturbance penalty condition to \(J^*(x)\).
Finally, the expression for the optimal control policy is given by
\(u(t)=\mu(x(t))\), where
\[
\mu(x)
=
\arg\min_{|u|\leq Ex}
\sup_{w\geq -(Ax+Bu)}
\left\{
s^\top x+r^\top u-\gamma^\top |w|
+p^\top(Ax+Bu+w)
\right\}.
\]
Equivalently, after removing the terms independent of \(u\),
\[
\mu(x)
=
\arg \min_{|u|\leq Ex}
\sum_{i=1}^{m}
\left(r_i+p^\top B_i\right)u_i .
\]
Since, for all \(i=1,\ldots,m\), the inequality \(|u|\leq Ex\)
restricts \(u_i\) to the interval \([-E_i x,E_i x]\), the minimum is
attained when \((r_i+p^\top B_i)\) and \(u_i\) have opposite signs.
Thus,
\[
u_i
=
-\operatorname{sign}\left(r_i+p^\top B_i\right)E_i x,
\qquad i=1,\ldots,m .
\]
\end{proofsketch}
\subsection{Related minimax dynamic game and \(\ell_1\)-gain}
In this section, we interpret the problem from a robustness perspective. Specifically, we show that a solution to the minimax problem certifies robustness guarantees formulated in terms of an \(\ell_1\)-gain minimization problem. This highlights that, whenever such a solution is found, it enjoys robustness guarantees.

For mathematical convenience, in this subsection we restrict the disturbance penalty vector to be a scalar multiple of the all-ones vector; that is, instead of a general nonnegative vector \(\bar{\gamma}\in\mathbb{R}^n_+\), we consider \(\bar{\gamma}=\gamma\mathbf{1}\), where \(\gamma\ge 0\) and \(\mathbf{1}\in\mathbb{R}^n\) denotes the all-ones vector. 
Define the functional
\[
L_{\gamma}(\mu,w)
\coloneqq
\sum_{t=0}^\infty \bigl( s^\top x_t + r^\top u_t - \gamma\, \mathbf{1}^\top |w_t| \bigr),
\]
where \(u_t=\mu_t(x_t)\), and consider the minimax two-player dynamic game
\[
\begin{aligned}
\inf_{\mu}\ \sup_{ w \ge -(Ax+Bu)}
&\; L_{\gamma}(\mu,w) \\
\text{s.t.}\qquad
& x_{t+1}=Ax_t+Bu_t+w_t,\\
& u_t=\mu_t(x_t),\qquad |u_t|\le Ex_t,
\qquad \forall t\in\mathbb N .
\end{aligned}
\]
From Theorem~\ref{eq:minimax-problem}, if feasible, the game value is
\[
\inf_{\mu}\ \sup_{ w \ge -(Ax+Bu)}
L_{\gamma}(\mu,w)
=
p^\top x_0.
\]
Let \(u^\star\) and \(w^\star\) denote optimal policies of the minimizing and maximizing players, respectively. For any disturbance sequence \(w\) satisfying
$
w_t \ge -(A x_t + B u_t^\star),
\ \forall t\in\mathbb N,
$
we have
\[
\sum_{t=0}^\infty \bigl( s^\top x_t + r^\top u_t^\star \bigr)
-
\gamma \sum_{t=0}^\infty \mathbf{1}^\top |w_t|
=
L_{\gamma}(u^\star,w)
\le
L_{\gamma}(u^\star,w^\star)
=
p^\top x_0.
\]
It follows that, for every positivity preserving disturbance sequence \(w\),
\[
\sum_{t=0}^\infty \bigl( s^\top x_t + r^\top u_t^\star \bigr)
\le
p^\top x_0 + \gamma \sum_{t=0}^\infty \mathbf{1}^\top |w_t|
=
p^\top x_0 + \gamma \|w\|_{1}.
\]

In particular, if \(x_0=0\), then, by positivity of the stage cost, defining
$
    y_t \coloneqq s^\top x_t + r^\top u_t^\star \ge 0,
$
we obtain
\[
\|y\|_{1}
=
\sum_{t=0}^\infty |y_t|
=
\sum_{t=0}^\infty \bigl(s^\top x_t + r^\top u_t^\star\bigr)
\le
\gamma \|w\|_{1}.
\]
Equivalently, defining the induced \(\ell_1\)-gain from \(w\) to \(y\) under the positivity-preserving disturbance constraint by
\[
\|G_\gamma\|_{\ell_1\to\ell_1}
\coloneqq
\sup_{\,w\ge -(Ax+Bu^\star)}
\frac{\sum_{t=0}^\infty |y_t|}{\sum_{t=0}^\infty \|w_t\|_1},
\]
we obtain
$
\|G_\gamma\|_{\ell_1\to\ell_1}\le \gamma.
$
That is, the induced \(\ell_1\)-gain from \(w\) to \(y\) is upper bounded by \(\gamma\).

We define \(\gamma_\star\) as the smallest \(\gamma\ge0\) for which there exist \(p\in\mathbb{R}^n_+\) and a static state feedback law \(u=Kx\), with \(K\in\mathcal K(E)\), such that
\[
\begin{aligned}
	\gamma_\star
	=
	\inf_{\gamma,\;p,\;K\in\mathcal K(E)}\quad & \gamma\\
	\text{subject to}\quad
	& \gamma \ge 0,\\
	& p = s + (A+BK)^\top p + K^\top r,\quad 0 \le p \le \gamma\,\mathbf 1.
\end{aligned}
\]
The value \(\gamma_\star\) is the tightest upper bound on the induced \(\ell_1\)-gain from the disturbance sequence \(w\) to the performance output \(y_t=s^\top x_t+r^\top u_t\) when \(x_0=0\). 
\section{Proofs of Main Results}\label{appenB}
\subsection{Proof of Theorem~\ref{thm:l1-dp}}\label{AppendB}
Recall that, for each \((A,B)\in\mathcal M\), the model-specific history update is
\begin{align}\label{hist-var}
    z^{(A,B)}_+
    =
    z^{(A,B)}
    + \gamma^\top \lvert v - A x - B u\rvert,
    \qquad
    x_+ = v,\quad v \ge 0,
\end{align}
We collect all model-specific histories in the data collection \(Z\). Recall also the dynamic programming operators
\[
\mathcal F_u V(x,Z)= \max_{v\ge 0}\Bigl\{s^\top x+r^\top u+V(v,Z_+)\Bigr\},
\qquad
\mathcal F V(x,Z)= \min_{u\in\mathcal U(x)} \mathcal F_u V(x,Z),
\]
with initialization and update
\begin{align}\label{v0vk}
    V_0(x,Z)\coloneqq -\min_{(A,B)\in\mathcal M} z^{(A,B)},
\qquad
V_{k+1}=\mathcal F V_k,\quad k\ge 0.
\end{align}
\noindent\emph{Proof of (i)-(ii).}
We start by showing that the sequence \(V_0,V_1,V_2,\ldots\) is monotonically non-decreasing.
By definition,
\begin{align*}
V_1(x,Z)
&= \min_{u\in\mathcal U(x)}\ \max_{v\ge 0}
\Bigl\{
s^\top x+r^\top u+V_0(v,Z_+)
\Bigr\}\\
&= \min_{u\in\mathcal U(x)}\ \max_{v\ge 0}
\left\{
s^\top x+r^\top u
-\min_{(A,B)\in\mathcal M}
\bigl(z^{(A,B)}+\gamma^\top \lvert v-Ax-Bu\rvert\bigr)
\right\}.

\end{align*}
Positivity of the stage cost follows from
Assumption~\ref{ass:stage cost-pos}, i.e. \(s^\top x+r^\top u\ge 0\).
Therefore,
\[
s^\top x+r^\top u
-\min_{(A,B)\in\mathcal M}
\bigl(z^{(A,B)}+\gamma^\top \lvert v-Ax-Bu\rvert\bigr)
\ge
-\min_{(A,B)\in\mathcal M}
\bigl(z^{(A,B)}+\gamma^\top \lvert v-Ax-Bu\rvert\bigr).
\]
It follows that, for any \((x,Z)\),
\begin{align*}
V_1(x,Z)
\ge
-\max_{u\in\mathcal U(x)}
\min_{(A,B)\in\mathcal M}
\min_{v\ge 0}
\bigl(z^{(A,B)}+\gamma^\top \lvert v-Ax-Bu\rvert\bigr)
=
-\min_{(A,B)\in\mathcal M} z^{(A,B)}
=V_0(x,Z),
\end{align*}
and so $V_1\ge V_0$.
Since \(\mathcal F\) is order-preserving in \(V\), it follows by induction that
\[
V_{k+1}=\mathcal F V_k\ \ge\ \mathcal F V_{k-1}=V_k,
\qquad \text{for all}\ k\ge 0.
\]
We conclude that the sequence \(V_0,V_1,V_2,\ldots\) is monotonically non-decreasing.

Fix \(N\ge 0\). Consider the \(N\)-stage truncated original minimax problem
\begin{equation}
\label{eq:FH-lb}
\begin{aligned}
\inf_{\mu}\ \sup_{\substack{(A,B)\in\mathcal M\\[1pt]
w \,\ge\, -(Ax+Bu)}}\
&\sum_{t=0}^{N}\Bigl(s^\top x_t+r^\top u_t-\gamma^\top|w_t|\Bigr)\\
\text{s.t.}\quad
&x_{t+1}=A x_t+B u_t+w_t,\quad x_0 \in \mathbb{R}_+^n.
\end{aligned}
\end{equation}
The value of Problem~\ref{prob:l1-robust-control} is bounded below by the
expression~\eqref{eq:FH-lb}. The value of~\eqref{eq:FH-lb} grows
monotonically with \(N\), and the value of Problem~\ref{prob:l1-robust-control}
is obtained in the limit as \(N\to\infty\).
Introducing the change of variables \(v_t\coloneqq x_{t+1}\) and using the
model-specific histories from~\eqref{hist-var} renders
\eqref{eq:FH-lb} equivalent to
\begin{equation}
\label{eq:FH-change}
\begin{aligned}
\inf_{\mu}\ \sup_{v \,\ge\, 0}\ &
\sum_{t=0}^{N}\bigl(s^\top x_t+r^\top u_t\bigr)
-\min_{(A,B)\in\mathcal M} z_{N+1}^{(A,B)},
\\
\text{s.t.}\quad
&x_{t+1}=v_t,\quad x_0\in \mathbb{R}_+^n,\\
& z_{t+1}^{(A,B)}
= z_t^{(A,B)}
+ \gamma^\top \lvert v_t - A x_t - B u_t\rvert,
\qquad z_0^{(A,B)}=0,\quad (A,B)\in\mathcal M.
\\
&
Z_t \coloneqq \{z_t^{(A,B)}\}_{(A,B)\in\mathcal M},
\qquad
Z_0 \coloneqq \{0\}_{(A,B)\in\mathcal M},
\\
&
u_t = \eta(x_t,Z_t) \in \mathcal{U}(x_t),
\quad
\forall\,t=0,\ldots,N+1.
\end{aligned}
\end{equation}
Standard minimax dynamic programming argument with terminal cost and recursion given in~\eqref{v0vk}, shows that the value of \eqref{eq:FH-change} is \(V_{N+1}(x_0,0)\). Thus, Problem~\ref{prob:l1-robust-control} has finite value \(J_*(x_0)\) if and only if the nondecreasing sequence \(\{V_k(x_0,0)\}_{k\ge 0}\) is upper bounded. The limit $V_\ast(x_0, 0)$ is equal to
the value of Problem~\ref{prob:l1-robust-control}.
It holds also that \(V_k(x,Z)\leq V_k(x,0)\) for every nonnegative history collection \(Z\). This shows that the limit exists for every such \(Z\) as well.
If problem~\ref{prob:robust-rewritten-min-history} is finite, then \(V_k(x_0,0)\) is bounded above by the value of~\eqref{prob:robust-rewritten-min-history}, so also
$
V_\ast(x_0,0)=\lim_{k\to\infty} V_k(x_0,0)
$
is finite.
This concludes proving items~\textit{(i)}-\textit{(ii)}. Next, we prove item~\textit{(iii)} of the theorem. 

\noindent\emph{Proof of (iii).}
Suppose we could find a value function $\overbar{V}(x,Z)$ satisfying \(V_0 \le \overbar V \le \infty\) and \(\mathcal F_{\overbar\eta (x,\,Z)}\,\overbar V \le \overbar V\), we may then define the sequence \(W_0,W_1,W_2,\ldots\) recursively by
\[
W_0\coloneqq V_0,\qquad
W_{k+1}(x,Z)\coloneqq\mathcal F_{\overbar\eta (x,\, Z)} W_k(x,Z).
\]
By dynamic programming,
\[
W_N(x,0)
=\max_{v\ge 0}
\sum_{t=0}^{N}\bigl(s^\top x_t+r^\top u_t\bigr)
-\min_{(A,B)\in\mathcal M} z_{N+1}^{(A,B)},
\]
where \(x_{t+1}=v_t\), \(u_t=\overbar\eta(x_t,Z_t)\), and \(Z\) is generated by the model-specific historical data update in~\eqref{hist-var}. Therefore, Problem~\eqref{prob:robust-rewritten-min-history} is bounded above by \(\lim_{k\to\infty} W_k(x_0,0)\). The definitions of \(V_k\) and \(W_k\) give by induction \(\overbar V \ge W_k \ge V_k\) for all \(k\), so
\[
V_\ast(x,Z)\le \lim_{k\to\infty} W_k(x,Z)\le \mathcal F_{\overbar\eta (x,\,Z)}\overbar V(x,Z)\leq \overbar V(x,Z).
\]
This proves that \(V_\ast(x_0,0)\le J_{\overbar\mu}(x_0)\le \overbar V(x_0,0)\) when the control law \(\overbar\mu\) is defined by \(u_t=\overbar\eta(x_t,Z_t)\). In particular, the control law is optimal if the equality \(\overbar V(x,Z)=V_\ast(x,Z)\) is attained. \qed

\subsection{Proof of Theorem~\ref{thm:l1-unknown-sign}}\label{thm:l1-unknown-sign-append}
This is a specialization of the more general Theorem~\ref{thm1} to the two model family
$
\mathcal M \coloneqq \{(A,B),(A,-B)\}.
$
Theorem~\ref{thm1} is proven in appendix~\ref{appenBsth}. We verify that the conditions in~\eqref{ineq-cond-jk} of Theorem~\ref{thm1} reduce to conditions
\emph{(i)}--\emph{(iii)} in the statement of Theorem~\ref{thm:l1-unknown-sign}.
Set
\[
p_{++}=p_+,\qquad p_{--}=p_-,
\qquad p_{+-}=p_{-+}=h.
\]
 The conditions in~\eqref{ineq-cond-jk} are covered by considering the following three cases.
\begin{enumerate}
\item[\textit{(i)}] \emph{Case \(i=j=\sigma\).}
Here \(A_i-A_j=0\) and \(B_i-B_j=0\), so the absolute-value term in
\eqref{ineq-cond-jk} vanishes, while
$
\max\!\bigl\{A_ix+B_iK_\sigma x,\ A_jx+B_jK_\sigma x\bigr\}
=
(A+\sigma BK_\sigma)x.
$
Since
\(p_{jk}=p_{ij}=p_\sigma\), condition \eqref{ineq-cond-jk} becomes
\[
p_\sigma^\top x
\ \ge\
s^\top x+r^\top K_\sigma x+p_\sigma^\top(A+\sigma BK_\sigma)x.
\]
Since this must hold for all \(x\in\mathbb R_+^n\), it is equivalent to having
\emph{(i)}.

\item[\textit{(ii)}] \emph{Case \(i=j=\sigma\neq k\).}
Again \(A_i-A_j=0\) and \(B_i-B_j=0\), so the absolute value term vanishes, and
$
\max\!\bigl\{A_ix+B_iK_kx,\ A_jx+B_jK_kx\bigr\}
=
(A+\sigma BK_k)x.
$
Since \(p_{jk}=h\) and \(p_{ij}=p_\sigma\), condition \eqref{ineq-cond-jk} becomes
\[
h^\top x
\ \ge\
s^\top x+r^\top K_kx+p_\sigma^\top(A+\sigma BK_k)x.
\]
Since this holds for all \(x\in\mathbb R_+^n\), it is equivalent to
\emph{(ii)}. Such an \(h\) also satisfies \(h\geq \max\{p_+,p_-\}\), since
\begin{align*}
h^\top x
\geq
s^\top x+r^\top K_kx+p_\sigma^\top(A+\sigma BK_k)x  
\geq
s^\top x+r^\top K_\sigma x+p_\sigma^\top(A+\sigma BK_\sigma)x
=
p_\sigma^\top x .
\end{align*}
i.e., applying the wrong controller leads to an increase in the certified cost.
\item[\textit{(iii)}] \emph{Case \(i\neq j\).}
Now \(A_i=A_j=A\) and \(B_i-B_j=\pm 2B\). Therefore
\[
\max\!\bigl\{A_ix+B_iK_kx,\ A_jx+B_jK_kx\bigr\}
=
\max\!\bigl\{(A+BK_k)x,\ (A-BK_k)x\bigr\}
=
Ax+|BK_kx|,
\]
where the last identity holds componentwise. Moreover,
\[
\frac12\,\gamma^\top\bigl|\,(A_i-A_j)x+(B_i-B_j)K_kx\,\bigr|
=
\gamma^\top|BK_kx|.
\]
Since \(p_{jk}=p_{ij}=h\), substituting into \eqref{ineq-cond-jk} yields \emph{(iii)}, i.e.
\[
h^\top x
\ \ge\
s^\top x+r^\top K_kx+h^\top\!\bigl(Ax+|BK_kx|\bigr)-\gamma^\top|BK_kx|.
\]
\end{enumerate}
Assuming that \emph{(i)}--\emph{(iii)} is true,
from Theorem~\ref{thm1} it follows that the Bellman equality holds 
$
\mathcal F\,\overline V(x,Z)\le \overline V (x,Z)
$
for the choice 
\[
V^{ij}(x,z^{(+)},z^{(-)})
\coloneqq
p_{ij}^\top x-\tfrac12\!\left(z^{(i)}+z^{(j)}\right),
\qquad
\overbar V(x,z^{(+)},z^{(-)})
\coloneqq
\max_{i,j\in\{+,-\}}V^{ij}(x,z^{(+)},z^{(-)}). 
\]
The stated performance bound follows directly from Theorem~\ref{thm1}, i.e. $
J_{\mu}(x_0)\le \max_{i,j} p_{ij}^\top x_0=h^\top x_0.
$
Finally, assume in addition that \(m=1\) hence $K_k$ can take two possible values \(K_k=\pm E\). Then \(K_kx=\pm Ex\)
is a scalar. Since \(E\in\mathbb R_+^{1\times n}\) and \(x\in\mathbb R_+^n\), we have
\(Ex\ge 0\), and hence
$
|BK_kx|
=
|B|\,|K_kx|
=
|B|\,Ex
=
(|B|E)x.
$
Substituting this identity into \emph{(iii)} gives
\[
h^\top x
\ \ge\
s^\top x+r^\top K_kx+h^\top Ax+(h-\gamma)^\top(|B|E)x,
\qquad \forall x\in\mathbb R_+^n.
\]
Since this inequality must hold for all \(x\in\mathbb R_+^n\), it is equivalent to 
 \emph{(iii')}. \qed
\subsection{Proof of Corollary~\ref{thm:l1-scalar}}\label{thm:l1-scalar-appe}
To prove Corollary~\ref{thm:l1-scalar}, we seek the smallest value of
\(\gamma\) for which the conditions in
Theorem~\ref{thm:l1-unknown-sign} hold, since this yields the best gain and cost
guarantee achieved by the CE policy proposed in
Theorem~\ref{thm:l1-unknown-sign}. First, 
observe that conditions
\textit{(i)}--\textit{(iii)} in Theorem~\ref{thm:l1-unknown-sign}, under the
choices \(n=m=1\), \(A=a>0\), \(B=b>0\), \(s=1\), \(r=0\), and $E=\tfrac{a}{b}$ reduce to the simpler
feasibility conditions
\[
p\geq 1,\qquad
h\geq 1+2ap,\qquad
h\geq 1+2ah-\gamma a,
\]
with \(p,h\in[0,\gamma]\). Next, we find the smallest $\gamma$ for which these three scalar inequalities hold.
Since \(p\geq 1\), we necessarily have
$
h\geq 1+2a,
$
and hence every feasible \(\gamma\) satisfies
$
\gamma\geq h\geq 1+2a.
$
The last inequality can be equivalently written as
$
h(1-2a)\geq 1-\gamma a.
$
We now distinguish two cases.
\begin{itemize}
    \item[\textit{(1)}] Case \(0\leq a\le\tfrac12\).
    Choose \(p=1\) and \(h=\gamma=1+2a\). Then the first two inequalities hold with equality. Moreover,
    \[
    h(1-2a)-(1-\gamma a)
    =
    (1+2a)(1-2a)-\bigl(1-a(1+2a)\bigr)
    =
    a(1-2a)\ge0.
    \]
    Thus \(\gamma=1+2a\) is feasible. Since no feasible \(\gamma\) can be smaller than \(1+2a\), this value is minimal.

    \item[\textit{(2)}] Case \(a\ge\tfrac12\).
    Since \(1-2a\le0\), the inequalities \(h\ge1+2a\) and \(h(1-2a)\ge1-\gamma a\) imply
    \[
    1-\gamma a
    \le h(1-2a)
    \le (1+2a)(1-2a)
    =
    1-4a^2,
    \]
    and therefore \(\gamma\ge4a\). Conversely, choose \(p=1\), \(h=1+2a\), and \(\gamma=4a\). Then \(h\le\gamma\) because \(a\ge\tfrac12\), and the last inequality holds with equality:
    \(h(1-2a)=1-4a^2=1-\gamma a\). Hence \(\gamma=4a\) is feasible and minimal.
\end{itemize}
Therefore, \(\gamma^{\mathrm{CE}}(a)=1+2a\) for \(0\leq a\le \tfrac12\), and \(\gamma^{\mathrm{CE}}(a)=4a\) for \(a\ge \tfrac12\), with minimizing values \(p=1\) and \(h=1+2a\). Moreover, Theorem~\ref{thm:l1-unknown-sign} implies that the Bellman inequality holds and certifies the cost bound
$
J_\mu(x_0)\le h x_0=(1+2a)x_0.
$
This proves the claim of the corollary.
\qed
\subsection{Proof of Theorem~\ref{thm1} }\label{appenBsth}
For $\mathcal M = \{(A_1,B_1),\ldots,(A_M,B_M)\}$, let
$z^{(i)}$ denote the scalar cumulative history associated with model $(A_i,B_i)$, evolving according to
\[ z_{t+1}^{(i)} = z_t^{(i)} + \gamma^\top\!\bigl|\,v_t-A_i x_t-B_i u_t\,\bigr|, \qquad z_0^{(i)}=0. \]
Collect these into
$
Z \coloneqq \bigl(z^{(1)},\ldots,z^{(M)}\bigr).
$ For \(i,j \in \{1,\ldots,M\}\), define
\[
V^{ij}(x,Z)
\;\coloneqq\;
p_{ij}^\top x \;-\; \tfrac{1}{2}\,\bigl(z^{(i)} + z^{(j)}\bigr).
\]
Next, define
$
\overbar V\coloneqq\max_{i,j}V^{ij}.
$
In proving this Theorem, we rely on item~\textit{(iii)} of theorem~\ref{thm:l1-dp}, that is we show that $\overbar V$ fulfills the sufficient conditions proposed there. 
First, note that \(\overbar V \ge V_0=-\min_i z^{(i)}\), indeed, it holds that
\begin{align*}
    \max_{i,j} \Bigl(p_{ij}^\top x - \tfrac{1}{2}\bigl(z^{(i)} + z^{(j)}\bigr)\Bigr)
    \ge \max_{i,j} \Bigl(- \tfrac{1}{2}\bigl(z^{(i)} + z^{(j)}\bigr)\Bigr) 
    = -\min_{i,j} \tfrac{1}{2}\bigl(z^{(i)} + z^{(j)}\bigr) \ge -\min_i z^{(i)}.
\end{align*}
The first inequality follows from \(p_{ij}^\top x \ge 0\), while the last follows by choosing \(j=i\), which gives
$
-\tfrac{1}{2}\bigl(z^{(i)}+z^{(i)}\bigr)=-z^{(i)}.
$
Hence, $\overbar V \ge V_0$ as required by  item~\textit{(iii)} in Theorem~\ref{thm:l1-dp}.
It remains to show that 
\[
\mathcal F_{\overbar\eta}\,\overbar V(x,Z)\le \overbar V(x,Z)
\qquad
\text{for all } (x,Z) \in \mathbb R_+^n \times \mathbb R^{\abs{\mathcal{M}}},
\]
where
$
\mathcal F_{\overbar\eta}V(x,Z)\coloneqq \mathcal F_uV(x,Z)\big|_{u=\overbar\eta(x,Z)}.
$
We let $\bar{\eta}(x,Z)=K_kx$, where $k\coloneqq\arg\min_{i\in\{1,\ldots,M\}} z^{(i)}$.
and recall
$
  \mathcal F_u V(x,Z)\coloneqq  \max_{v\ge 0}
\bigl\{
s^\top x+r^\top u+V(v,Z_+)
\bigr\}.$
We distinguish two cases:
\begin{enumerate}
    \item[\textit{(i)}] If \(i=j=k\), then
\begin{multline*}
\mathcal{F}_{K_ix}\, V^{ii}(x,Z)
=
\max_{v\geq 0}
\left\{
s^\top x+r^\top K_i x
+
V^{ii}\left(
v,\,
\begin{bmatrix}
z^{(1)}\\
\vdots\\
z^{(M)}
\end{bmatrix}
+
\begin{bmatrix}
\gamma^\top\!\lvert v-A_1x-B_1K_i x\rvert\\
\vdots\\
\gamma^\top\!\lvert v-A_Mx-B_MK_i x\rvert
\end{bmatrix}
\right)
\right\}\\
= \max_{v \ge 0}
\left\{
p_{ii}^\top v
- z^{(i)}
+s^\top x+r^\top K_i x
-\gamma^\top\!\lvert v - A_i x - B_i K_i x\rvert
\right\} \\[4pt]
= -z^{(i)}+s^\top x+r^\top K_i x
+\max_{v \ge 0} \Bigl\{ p_{ii}^\top v-\gamma^\top\!\lvert v - (A_i x + B_i K_i x)\rvert \Bigr\}.
\end{multline*}
Since \(0\le p_{ii}\le \gamma\), from Lemma~\ref{lem:sup-ptwise-vec} (VL1), it holds that 
\begin{align*}
    -z^{(i)}+s^\top x+r^\top K_i x
+\max_{v \ge 0} \Big\{  p_{ii}^\top v-\gamma^\top\!\lvert v - (A_i x + B_i K_i x)\rvert \Big\}\\
= -z^{(i)} + s^\top x +  p_{ii}^\top A_i x + \big(r^\top + p_{ii}^\top B_i\big)K_i x.
\end{align*}
Since \(p_{ii}^\top x \ge s^\top x + p_{ii}^\top(A_i+B_iK_i)x + r^\top K_i x\), it follows that
\[
\mathcal{F}_{K_i}V^{ii}(x,Z)\le -z^{(i)} + p_{ii}^\top x = V^{ii}(x,Z).
\]
\item[\textit{(ii)}] We now consider all remaining index configurations,
that is, all triples \((i,j,k)\) for which at least two of the indices are
distinct. With control \(K_{k}x\) used, we obtain
\begin{multline*}
\mathcal{F}_{K_kx}V^{ij}(x,Z)
=
\max_{v\geq 0}
\left\{
s^\top x+r^\top K_kx
+
V^{ij}\left(
v,\,
\begin{bmatrix}
z^{(1)}\\
\vdots\\
z^{(M)}
\end{bmatrix}+
\begin{bmatrix}
\gamma^\top \lvert v-A_1x-B_1K_kx\rvert\\
\vdots\\
\gamma^\top \lvert v-A_Mx-B_MK_kx\rvert
\end{bmatrix}
\right)
\right\}\\
=\max_{v\ge0}\Bigl\{
p_{ij}^\top v
-\tfrac12\bigl(z^{(i)}+z^{(j)}\bigr)
+s^\top x+r^\top K_kx
-\tfrac12\gamma^\top\lvert v-A_ix-B_iK_kx\rvert
-\tfrac12\gamma^\top\lvert v-A_jx-B_jK_kx\rvert
\Bigr\}\\[-2pt]
=-\tfrac12\bigl(z^{(i)}+z^{(j)}\bigr)
+s^\top x+r^\top K_kx+\max_{v\ge0}\Bigl\{
p_{ij}^\top v
-\tfrac12\gamma^\top\lvert v-A_ix-B_iK_kx\rvert
-\tfrac12\gamma^\top\lvert v-A_jx-B_jK_kx\rvert
\Bigr\}.
\end{multline*}
Let
$
y_i \coloneqq A_i x + B_i K_{k} x $, $y_j \coloneqq A_j x + B_j K_{k} x,
$
and recall that \(y_i\ge 0\), \(y_j\ge 0\) and \(0\le p_{ij}\le \gamma\).
We are then solving the optimization problem
\[
\max_{v\ge 0}\ \Big\{\, p_{ij}^\top v
-\tfrac{1}{2}\,\gamma^\top\!\big|v-y_i\big|
-\tfrac{1}{2}\,\gamma^\top\!\big|v-y_j\big| \,\Big\}.
\]
It follows from Lemma~\ref{lem:sup-ptwise-vec} (VL2) that 
\begin{align}\label{advers}
    v^\ast=\max\{y_i,y_j\}=\max\{A_i x + B_i K_{k} x,A_j x + B_j K_{k} x\},
\end{align}
and the optimal value equals
$
p_{ij}^\top \max\{y_i,y_j\}
\;-\;\tfrac{1}{2}\,\gamma^\top \big|\,y_i-y_j\,\big|.
$
Plugging this in gives
\begin{multline*}
\mathcal{F}_{K_k x}V^{ij}(x,Z)
=-\tfrac12\bigl(z^{(i)}+z^{(j)}\bigr)
+s^\top x+r^\top K_kx
\\
+p_{ij}^\top\max\{A_ix+B_iK_kx,\ A_jx+B_jK_kx\}
-\tfrac12\gamma^\top\bigl|A_ix+B_iK_kx-A_jx-B_jK_kx\bigr|.
\end{multline*}
The assumption in \eqref{ineq-cond-jk} can be written succinctly as
\begin{multline*}
  \max \{p_{ik}^\top x,\, p_{jk}^\top x\}  \;\ge\; 
  s^\top x \;+\; r^\top K_{k} x 
 \\ \;+\; p_{ij}^\top \max\{A_i x + B_i K_{k} x,\ A_j x + B_j K_{k} x\}
\;-\;\tfrac{1}{2}\,\gamma^\top \big|\,A_i x + B_i K_{k} x \;-\; A_j x - B_j K_{k} x\,\big|,
\end{multline*}
where \(0 \leq p_{ij} = p_{ji} \leq \gamma\). Here, the \(\max\) on the left excludes the case \(i \neq j = k\), consistent with the assumption in the theorem.
It then follows that
\begin{multline*}
\mathcal{F}_{K_k}V^{ij}(x,Z)
\le -\tfrac12\bigl(z^{(i)}+z^{(j)}\bigr)+\max\{p_{ik}^\top x,p_{jk}^\top x\}
\\\le\max\Bigl\{p_{ik}^\top x-\tfrac12\bigl(z^{(i)}+z^{(k)}\bigr),
p_{jk}^\top x-\tfrac12\bigl(z^{(j)}+z^{(k)}\bigr)\Bigr\},
\end{multline*}
where the last step uses \(k=\arg\min_r z^{(r)}\), so \(z^{(k)}\le z^{(i)}\) and \(z^{(k)}\le z^{(j)}\).
\end{enumerate}
Since by definition
$
V^{ij}(x,Z)=p_{ij}^\top x-\tfrac12\bigl(z^{(i)}+z^{(j)}\bigr),
$
we obtain
\[
\mathcal{F}_{K_k}V^{ij}(x,Z)\le\max\{V^{ik}(x,Z),V^{jk}(x,Z)\}.
\]
Consequently,
\begin{multline*}
\mathcal{F}\overbar V(x,Z)
\le\mathcal{F}_{K_k}\overbar V(x,Z)
=\max_{i,j}\mathcal{F}_{K_k}V^{ij}(x,Z)
\le\max_{i,j}\max\{V^{ik}(x,Z),V^{jk}(x,Z)\}\\
\le\max_{i,j}V^{ij}(x,Z)
=\overbar V(x,Z).
\end{multline*}
This establishes the Bellman inequality
$
\mathcal{F}\overbar V(x,Z)\le\overbar V(x,Z).
$
Moreover, the corresponding certainty--equivalence control law certifying the Bellman inequality is
\[
u_t=K_{k_t}x_t,\qquad
k_t=\arg\min_{i\in\{1,\ldots,M\}}z_t^{(i)}.
\]
This concludes the proof.
\qed